\theoremstyle{plain}
\newtheorem{prop}{Proposition}
\newtheorem{thm}[prop]{Theorem}
\newtheorem{cor}[prop]{Corollary}
\newtheorem{fact}[prop]{Fact}
\newtheorem{step}{Claim}
\newtheorem*{thmA}{Theorem A}
\newtheorem*{thmB}{Theorem B}
\newtheorem*{thmC}{Theorem C}
\theoremstyle{definition}
\theoremstyle{remark}
\newtheorem{rem}[prop]{Remark}
\newtheorem{example}[prop]{Example}
\numberwithin{prop}{section} 
\numberwithin{equation}{section}
\DeclareMathOperator{\cha}{char}
\DeclareMathOperator{\Endm}{End}
\DeclareMathOperator{\Zen}{Z}
\DeclareMathOperator{\image}{im}
\DeclareMathOperator{\kernel}{ker}
\DeclareMathOperator{\rank}{rk}
\DeclareMathOperator{\res}{res}
\DeclareMathOperator{\tra}{tr}
\DeclareMathOperator{\spa}{Span}
\DeclareMathOperator{\abel}{ab}
\DeclareMathOperator{\adj}{ad}
\newcommand{\opno}{\lhd_o}
\newcommand{\clno}{\lhd_c}
\newcommand{\clsgp}{\leq_c}
\newcommand{\Z}{\mathbb{Z}}
\newcommand{\Q}{\mathbb{Q}}
\newcommand{\F}{\mathbb{F}}
\newcommand{\dbl}{[\![}
\newcommand{\dbr}{]\!]}
\newcommand{\argu}{\hbox to 7truept{\hrulefill}}
\author{C. Quadrelli}
\title{Bloch-Kato pro-p groups and locally powerful groups}
\address{Dipartimento di Matematica, Universit\`a di Milano-Bicocca\\
Ed. U5, Via R.Cozzi 53, 20125 Milano, Italy}
\email{c.quadrelli1@campus.unimib.it}
\begin{document}
\begin{abstract}
A Bloch-Kato pro-$p$ group $G$ is a pro-$p$ group with the property
that the $\F_p$-cohomology ring of every closed subgroup of $G$ is quadratic.
It is shown that either such a pro-$p$ group $G$ contains no closed free pro-$p$ groups of infinite rank,
or there exists an {\it orientation} $\theta\colon G\rightarrow \Z_p^\times$
such that $G$ is $\theta$-abelian. (See Thm~B.) 
In case that $G$ is also finitely generated, this implies that 
$G$ is powerful, $p$-adic analytic with $d(G)=cd(G)$, and
its $\F_p$-cohomology ring is an exterior algebra (see Cor. \ref{corollmetabelian}).
These results will be obtained by studying locally powerful groups (see Thm~A).
There are certain Galois-theoretical implications,
since Bloch-Kato pro-$p$ groups arise naturally as maximal pro-$p$ quotients
and pro-$p$ Sylow subgroups of absolute Galois groups (see Corollary \ref{corollware}).
Finally, we study certain closure operations of the class of Bloch-Kato pro-$p$ groups, 
connected with the Elementary type conjecture.
\end{abstract}
\keywords{Galois cohomology, Bloch-Kato groups, powerful pro-$p$ groups, absolute Galois groups, Tits alternative, Elementary Type conjecture}

\subjclass[2010]{20E18, 12G05}

\dedicatory{To Professor Helmut Koch, with admiration on his 80th birthday.}

\maketitle

\section{Introduction}\label{sec:intro}

Following \cite{bk} one calls a pro-$p$ group $G$ {\it a Bloch-Kato pro-$p$ group}
if the cohomology ring $H^\bullet(K,\F_p)$ is a quadratic
$\F_p$-algebra for every closed subgroup $K$ of $G$.
From the positive solution of the Bloch-Kato conjecture 
recently obtained by M.~Rost and V.~Voevodsky (with C.~Weibel's patch) one knows that for 
every field $F$ containing a primitive $p$th root of unity the maximal pro-$p$ quotient $G_F(p)$ of the absolute Galois group $G_F$ of $F$ is a Bloch-Kato pro-$p$ group
(see \cite{voevod1} and \cite{voevod2} for an overview of the proof, and \cite{mvw}, \cite{voevod3},
\cite{weibel}, \cite{weibel2} for the foundation and completion of the proof).

The main goal of this paper is to estabilish a strong version of Tits alternative for Bloch-Kato pro-$p$ groups.
(See \cite{tits} or  \cite{dixon} for the original Tits alternative on linear groups.)
For this purpose we study in section \ref{sec:locpow} {\it locally powerful} pro-$p$ groups $G$,
where we call a pro-$p$ group $G$ locally powerful if
every finitely generated closed subgroup $K$ of $G$ is powerful. 
In order to state the classification of torsion-free, finitely generated, locally powerful pro-$p$ groups
effectively, we will introduce the notion of an {\it oriented pro-$p$ group} $(G,\theta)$, i.e.,
$G$ is a pro-$p$  group and $\theta\colon G\to\Z_p^\times$ is a
(continuous) homomorphism of pro-$p$ groups,
where $\Z_p$ denotes the ring of $p$-adic integers, and $\Z_p^\times\subset\Z_p$ denotes its group of units.
For an oriented pro-$p$ group $(G,\theta)$ one has a
particular closed subgroup 
\begin{equation*}
\Zen_\theta(G)=\left\{\, h\in \ker(\theta)\left| ghg^{-1}=h^{\theta(g)}\right. \text{for all $g\in G$}\,\right\}
\end{equation*}
which will be called the {\it $\theta$-center} of $G$. The oriented pro-$p$ group $(G,\theta)$
will be called {\it $\theta$-abelian}, if $\Zen_\theta(G)=\ker(\theta)$. 
Thus every $\theta$-abelian pro-$p$ group is metabelian.
Obviously, if $\theta\equiv {\mathbf 1}$
is constant equal to $1$, $\Zen_{\mathbf 1}(G)$ coincides with the center of $G$, and 
$(G,{\mathbf 1})$ is ${\mathbf 1}$-abelian if,
and only if, $G$ is abelian. In \S \ref{subsec:proofA} we will prove the following theorem.

\begin{thmA}
A torsion-free finitely generated pro-$p$ group $G$ is locally powerful
if, and only if, there exists an orientation $\theta\colon G\rightarrow\Z_p^\times$
such that $(G,\theta)$ is $\theta$-abelian.
\end{thmA}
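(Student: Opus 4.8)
The plan is to prove both implications inside the Lazard correspondence between torsion-free finitely generated powerful (i.e.\ \emph{uniform}) pro-$p$ groups and their $\Z_p$-Lie lattices; I write everything for $p$ odd, the case $p=2$ requiring the customary modifications (working with $\overline{H^{4}}$ and with orientations valued in $1+4\Z_2$). \emph{Sufficiency} is the easy half: if $(G,\theta)$ is $\theta$-abelian put $A=\ker\theta=\Zen_\theta(G)$; then $A$ is abelian, so $A\cong\Z_p^n$ ($G$ being torsion-free and finitely generated), and $G/A\cong\theta(G)$ is a pro-$p$ subgroup of $\Z_p^\times$, hence lies in $1+p\Z_p$ and is trivial or $\cong\Z_p$. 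If $G=A$ there is nothing to prove; otherwise $G=\overline{\langle\sigma\rangle}\ltimes A$ with $\sigma a\sigma^{-1}=a^{\lambda}$, $\lambda=\theta(\sigma)\in1+p\Z_p$. For a finitely generated closed $H\le G$ set $B=H\cap A\cong\Z_p^m$; then either $H=B$ is abelian, or $H=\overline{\langle\tau\rangle}B$ with $\tau=\sigma^{p^{k}}a_0$ projecting to a topological generator of $H/B$, and then $[\tau,b]=b^{\lambda^{p^{k}}-1}\in B^{p}$ for all $b\in B$ (since $\lambda^{p^{k}}-1\in p\Z_p$); as $B$ is a normal abelian subgroup of $H$ a routine commutator computation then gives $[H,H]\le B^{p}\le\overline{H^{p}}$, so $H$ is powerful and $G$ is locally powerful.

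For \emph{necessity}, taking $H=G$ shows $G$ is powerful, hence uniform; let $L=L(G)$ be its Lie lattice, a free $\Z_p$-module of rank $d=d(G)$ with $[L,L]\subseteq pL$, and write $\mathfrak{g}=L\otimes_{\Z_p}\Q_p$. The key step is the \textbf{Claim} that $[x,y]\in\Q_p x+\Q_p y$ for all $x,y\in\mathfrak{g}$. It is enough to treat $x,y\in L$ and then rescale. The subgroup $H=\overline{\langle\exp x,\exp y\rangle}$ is finitely generated, torsion-free, and powerful by hypothesis, hence uniform with $d(H)\le2$, and its Lie lattice $L(H)$ is a Lie subalgebra of $L$ containing $x$ and $y$; therefore the $\le2$-dimensional subspace $L(H)\otimes\Q_p$ of $\mathfrak{g}$ contains $x$, $y$ and $[x,y]$, so these are linearly dependent over $\Q_p$, whence $[x,y]\in\Q_p x+\Q_p y$ (trivially so if $x,y$ are already dependent).

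Granting the Claim I would argue over $\Q_p$. If $\mathfrak{g}$ is abelian then $G$ is abelian and $\theta=\mathbf{1}$ works. Otherwise the Claim lets one pick $z,w$ with $[z,w]=\mu w$, $\mu\ne0$; then for every $u\in\mathfrak{g}$ one has $[z,u]-\mu u\in\Q_p z$ — automatic if $\dim\mathfrak{g}=2$, and if $\dim\mathfrak{g}\ge3$ because $\operatorname{ad}(z)$ sends every line of $\mathfrak{g}/\Q_p z$ into itself, hence acts there by a scalar, which is $\mu$ — so $[z,u]=\mu u+\phi(u)z$ for a linear form $\phi$ with $\phi(z)=-\mu$. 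The hyperplane $W=\ker\phi$ complements $\Q_p z$, is a subalgebra (for $w_1,w_2\in W$ one has $[w_1,w_2]\in\Q_p w_1+\Q_p w_2\subseteq\ker\phi$), and is abelian: the Jacobi identity gives $[z,[w_1,w_2]]=2\mu[w_1,w_2]$, while $[z,[w_1,w_2]]=\mu[w_1,w_2]$ since $[w_1,w_2]\in W$, forcing $[w_1,w_2]=0$. Thus $\mathfrak{g}=\Q_p z\ltimes W$ with $W$ an abelian ideal of codimension $1$ on which $\operatorname{ad}(z)$ acts by $\mu$. Passing to the lattice, $A:=L\cap W$ is saturated of corank $1$, an abelian ideal of $L$, and for $\bar z\in L$ with $L=\Z_p\bar z\oplus A$ one gets $\operatorname{ad}(\bar z)|_A=\lambda\cdot\mathrm{id}$ with $\lambda\in\Z_p$; and $[L,L]=[\bar z,A]=\lambda A\subseteq pL\cap A=pA$ forces $\lambda\in p\Z_p$.

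It remains to construct $\theta$ and verify $\theta$-abelianness. Composing $\log\colon G\to L$ with $L\to L/A\cong\Z_p$ yields $c\colon G\to\Z_p$, which is a homomorphism because each Baker--Campbell--Hausdorff correction $\log(gh)-\log g-\log h$ is a sum of iterated brackets, hence lies in $[\mathfrak{g},\mathfrak{g}]$, and being in $L$ it lies in $[\mathfrak{g},\mathfrak{g}]\cap L=W\cap L=A$. Set $\theta(g)=\exp(\lambda\,c(g))\in1+p\Z_p\subseteq\Z_p^\times$, a homomorphism with $\ker\theta=\exp(A)$. Writing $\log g=c(g)\bar z+a_g$ with $a_g\in A$ one has $\operatorname{ad}(\log g)|_A=\lambda c(g)\cdot\mathrm{id}$, hence $g\exp(a)g^{-1}=\exp\bigl(\exp(\operatorname{ad}(\log g))\,a\bigr)=\exp(a)^{\theta(g)}$ for all $a\in A$; therefore $\exp(A)\subseteq\Zen_\theta(G)$, and since always $\Zen_\theta(G)\subseteq\ker\theta=\exp(A)$ we conclude $(G,\theta)$ is $\theta$-abelian. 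I expect the hard part to be not the Claim itself but the two passages between groups and Lie algebras: (i) that $\overline{\langle\exp x,\exp y\rangle}$ is uniform with Lie lattice a subalgebra of $L$ through $x$ and $y$ (this needs $d=\dim$ for powerful pro-$p$ groups together with standard uniform-group theory), and (ii) the final check that $c$ is a homomorphism and that $\exp(A)=\ker\theta=\Zen_\theta(G)$ — the one place where the global hypothesis $[L,L]\subseteq pL$ (powerfulness of $G$ itself, not merely of its $2$-generated subgroups) is genuinely used.
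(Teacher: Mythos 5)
Your argument is correct, but it reaches the conclusion by a genuinely different route than the paper, so a comparison is worth recording. The common core is the observation that every $2$-generated closed subgroup of a torsion-free locally powerful group is uniform of rank at most $2$, so that in $\log(G)$ every bracket lies in the span of its two arguments; this is Remark~\ref{remarklocpwfetLie}(ii), i.e.\ your Claim. From that point the paper proceeds by induction on $d(G)$: the case $d=2$ is quoted from the literature, the case $d=3$ is an explicit rank-$3$ Lie-algebra computation (Jacobi identity, vanishing of the trace of $\adj$ of a commutator, and a short case analysis), and for $d\geq4$ the orientations of the subgroups $\langle x_1,x_i,x_{n+1}\rangle$ are patched together. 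You instead argue once, in all ranks, inside $\mathfrak{g}=\log(G)\otimes\Q_p$: since $\adj(z)$ preserves every line of $\mathfrak{g}/\Q_p z$ it acts there as the scalar $\mu$, and the Jacobi identity forces the complementary hyperplane $W=\ker\phi$ to be an abelian ideal; descending to the lattice and exponentiating produces $\theta$. This buys a dimension-free proof with no induction and no trace computation; the price is that you must re-derive the group/Lie dictionary by hand (that $c=\mathrm{pr}\circ\log$ is a homomorphism via Baker--Campbell--Hausdorff, the conjugation formula $g\exp(a)g^{-1}=\exp\bigl(e^{\adj(\log g)}a\bigr)$, and $\exp(\alpha a)=\exp(a)^{\alpha}$), which the paper encapsulates in Proposition~\ref{propthetabelianGL}; these are standard facts about uniform groups, so this is legitimate. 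Two small omissions are easily filled: a pair with $[z,w]=\mu w$, $\mu\neq0$, exists because if $[x,y]=\alpha x+\beta y\neq0$ one may take $z=x$, $w=[x,y]$ when $\beta\neq0$ and swap $x$ and $y$ otherwise; and $\lambda\in\Z_p$ (indeed $\lambda\in p\Z_p$) follows by evaluating $\adj(\bar z)$ on a primitive element of $A$ and using $[L,L]\subseteq pL$. Finally, for $p=2$ your parenthetical restriction $\image(\theta)\leq 1+4\Z_2$ is exactly the hypothesis of Proposition~\ref{facthetabelian} and is needed in the sufficiency half (your computation $[\tau,b]=b^{\lambda^{p^k}-1}$ is precisely the direct verification the paper delegates to that proposition), so on this point your treatment matches the paper's.
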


Using Theorem~A we will deduce in section \ref{sec:titsalt} the following Tits alternative-type
result for Bloch-Kato pro-$p$ groups (see Theorem \ref{thm3equivalenceBloch-KatoHP}).

\begin{thmB}
Let $p$ be an odd prime, and let $G$ be a Bloch-Kato pro-$p$ group. 
Then either $G$ is $\theta$-abelian for some orientation $\theta\colon G\to\Z_p^\times$
or $G$ contains a closed non-abelian free pro-$p$ subgroup.
\end{thmB}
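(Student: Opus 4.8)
The plan is to show: if the Bloch--Kato pro-$p$ group $G$ (with $p$ odd) contains no closed non-abelian free pro-$p$ subgroup, then $G$ is torsion-free and locally powerful; Theorem~A then supplies an orientation $\theta\colon G\to\Z_p^{\times}$ with $(G,\theta)$ $\theta$-abelian. Torsion-freeness is immediate: since $p$ is odd, $H^{\bullet}(\Z/p,\F_p)$ is the tensor product of an exterior algebra on a degree-$1$ class with a polynomial algebra on the degree-$2$ Bockstein, so it is not generated in degree $1$, \emph{a fortiori} not quadratic; as every closed subgroup of $G$ has quadratic $\F_p$-cohomology, $G$ contains no copy of $\Z/p$, and hence $G$ --- and therefore every closed subgroup of $G$ --- is a torsion-free Bloch--Kato pro-$p$ group.

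The heart of the argument is the analysis of the $2$-generated closed subgroups. Fix $x,y\in G$ and put $U=\overline{\langle x,y\rangle}$, a torsion-free Bloch--Kato pro-$p$ group with $d(U)\leq 2$. If $d(U)\leq 1$ then $U\cong\Z_p$ or $U=1$, which is powerful. Assume $d(U)=2$. By quadraticity the cup-product map $\Lambda^{2}H^{1}(U,\F_p)\to H^{2}(U,\F_p)$ is surjective, and its source is $1$-dimensional; were this map zero we would get $H^{2}(U,\F_p)=0$, so $U$ would be free pro-$p$ of rank $2$, a closed non-abelian free pro-$p$ subgroup of $G$ --- which is excluded. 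Hence $H^{2}(U,\F_p)\cong\F_p$, the cup pairing $H^{1}(U,\F_p)\times H^{1}(U,\F_p)\to H^{2}(U,\F_p)$ is a non-degenerate alternating form, and --- again by quadraticity --- $H^{n}(U,\F_p)$, being spanned by $n$-fold cup products of degree-$1$ classes over a $2$-dimensional space, vanishes for $n\geq 3$ (as $\chi\cup\chi=0$ for $\chi\in H^{1}$, $p$ being odd). Thus $U$ is a Demushkin pro-$p$ group of rank $2$ with $\cohdim(U)=2$. By the Schreier-type index formula for Demushkin groups every open subgroup of $U$ is again $2$-generated, so $U$ has finite rank; being finitely generated of finite rank it is compact $p$-adic analytic, and being torsion-free with $d(U)=2=\cohdim(U)=\dim U$ it is uniform, in particular powerful. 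This structural step --- upgrading ``Demushkin of rank $2$'' to ``powerful'' --- is the part I expect to be the main obstacle.

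It follows at once that $G$ is locally powerful: if $K\clsgp G$ is finitely generated and $x,y\in K$, then, $\overline{\langle x,y\rangle}$ being powerful, $[x,y]\in[\overline{\langle x,y\rangle},\overline{\langle x,y\rangle}]\subseteq\overline{\langle x,y\rangle^{p}}\subseteq\overline{K^{p}}$; since $[K,K]$ is topologically generated by such commutators and $\overline{K^{p}}$ is closed, $[K,K]\subseteq\overline{K^{p}}$, i.e.\ $K$ is powerful. If $G$ itself is finitely generated, Theorem~A now yields the desired orientation $\theta$ and we are done.

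In general, write $G$ as the closure of the directed union of its finitely generated closed subgroups $K$. Each such $K$ is torsion-free, finitely generated and locally powerful, hence $\theta_{K}$-abelian by Theorem~A; when $K$ is non-procyclic, $\ker\theta_{K}$ is the unique maximal closed abelian normal subgroup of $K$ and $\theta_{K}$ is recovered from the conjugation action of $K$ on it, so it is canonical. Using that a non-trivial element of $\ker\theta_{K'}$ has no non-trivial fixed point under the $\theta_{K'}$-action, one checks that $\ker\theta_{K}=K\cap\ker\theta_{K'}$ and $\theta_{K'}|_{K}=\theta_{K}$ whenever $K\clsgp K'$ (both non-procyclic), so the orientations $\theta_{K}$ are compatible under inclusion. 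They therefore glue to an orientation $\theta\colon G\to\Z_p^{\times}$ with $\ker\theta=\overline{\bigcup_{K}\ker\theta_{K}}$ closed, abelian and normal in $G$, and with $G$ acting on $\ker\theta$ through $\theta$; that is, $\Zen_{\theta}(G)=\ker(\theta)$, so $(G,\theta)$ is $\theta$-abelian, completing the dichotomy. (Besides the $2$-generated analysis, the other delicate point is exactly this compatibility of the canonical orientations, which is what makes the non-finitely-generated case go through.)
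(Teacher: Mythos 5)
Your overall strategy is sound and genuinely different from the paper's. The paper proves that ``no non-abelian closed free pro-$p$ subgroup'' implies ``locally powerful'' by applying the Symonds--Weigel criterion (Theorem \ref{thmsymondsthomas}) to an arbitrary finitely generated non-powerful subgroup $K$: a non-trivial element of $\kernel(\Lambda_2(\cup))$ is used to manufacture a properly embedded $2$-generated subgroup $S$ on which $\Lambda_2(\cup)$ vanishes, whence $H^2(S)=0$ and $S$ is free of rank $2$. You instead reduce everything to $2$-generated subgroups via the commutator argument $[x,y]\in\overline{\langle x,y\rangle}^{\,p}\subseteq K^p$ (correct, and a nice observation that powerfulness of all finitely generated subgroups is detected on $2$-generated ones), and you identify each non-procyclic, non-free $2$-generated subgroup as a Demushkin group of rank $2$. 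Your treatment of the non-finitely generated case --- canonicity and compatibility of the orientations $\theta_K$ and their gluing --- is a genuine addition, since the paper simply invokes Theorem~A, which is stated for finitely generated groups; you should, however, add a line on why the glued $\theta$ is continuous (fix $1\neq h\in\kernel(\theta)$ and recover $\theta(g)$ from $ghg^{-1}=h^{\theta(g)}$, using that $\lambda\mapsto h^{\lambda}$ is a homeomorphism of $\Z_p$ onto $\overline{\langle h\rangle}$).

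The genuine soft spot is exactly the step you flag: upgrading ``$U$ is Demushkin of rank $2$'' to ``$U$ is powerful''. As written you deduce it from the assertion that a torsion-free $p$-adic analytic pro-$p$ group with $d(U)=\cohdim(U)=\dim U$ is uniform; this is not a quotable general fact (whether torsion-freeness together with $d=\dim$ forces uniformity is a delicate question in its own right), so at its pivotal point your argument rests on an unproved claim. It can be closed cheaply in your situation: by the classification of Demushkin groups of rank $2$ for $p$ odd (Demushkin, Serre, Labute), either $U\cong\Z_p^2$ or $U\cong\langle x,y\mid x^{q}[x,y]=1\rangle$ with $q$ a power of $p$, and in both cases $[U,U]\subseteq U^p$, so $U$ is powerful (indeed uniform); this also makes the detour through the subgroup index formula, finite rank and analyticity unnecessary. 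Alternatively, you can bypass Demushkin theory entirely, as the paper does, since the Symonds--Weigel theorem converts ``not powerful'' directly into a $2$-generated subgroup with vanishing cup product, i.e.\ a non-abelian free one. With the rank-$2$ step repaired in one of these ways, your proof is complete.
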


For $p$ odd, a result similar to Theorem~B was already proved by R.~Ware for 
maximal pro-$p$ Galois groups
\cite[Theorem 1 and Corollary 1]{ware1}. His article was also a motivation for us
to look for a Tits alternative in the class of Bloch-Kato pro-$p$ groups.

In the final section we will consider free and direct products as well as inverse limits of Bloch-Kato
pro-$p$ groups. It will turn out that the class of Bloch-Kato pro-$p$ groups is closed 
under free pro-$p$ products (see Theorem \ref{freeprodofBloch-Kato}),
and under certain inverse limits (see Proposition \ref{propinverselimit}).
However, for direct products one has the following.

\begin{thmC}
Let $G_1$ and $G_2$ be Bloch-Kato pro-$p$ groups, and assume that $G_1\times G_2$ is Bloch-Kato as well.
Then the following restrictions hold:
\begin{itemize}
 \item[(i)] None of $G_1$ and $G_2$ is a powerful non-abelian Bloch-Kato group;
 \item[(ii)] at least one of the two groups is abelian.
\end{itemize}
Moreover, $\Z_p\times S$ is a Bloch-Kato pro-$p$ group for any free pro-$p$ group $S$.
\end{thmC}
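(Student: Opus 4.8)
The plan is to treat the three assertions separately, using throughout the Künneth isomorphism $H^\bullet(G_1\times G_2,\F_p)\cong H^\bullet(G_1,\F_p)\otimes_{\F_p}H^\bullet(G_2,\F_p)$ together with the elementary fact that the graded-commutative tensor product of two quadratic $\F_p$-algebras is quadratic. Consequently the cohomology ring of $G_1\times G_2$ itself is automatically quadratic once those of $G_1,G_2$ are, so every restriction in (i)--(ii) must be forced by a suitable \emph{proper} closed subgroup. I take $p$ odd throughout, so that Theorem~B is available and Bloch-Kato pro-$p$ groups are torsion-free, and assume $G_1,G_2\neq 1$.

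For (i) I would argue by contradiction: suppose $G_1$ is powerful and non-abelian and $G_1\times G_2$ is Bloch-Kato. Choosing $a,b\in G_1$ with $[a,b]\neq 1$, the subgroup $N:=\overline{\langle a,b\rangle}$ is finitely generated, non-abelian and Bloch-Kato; since a powerful pro-$p$ group has no non-abelian free pro-$p$ subgroup, Theorem~B forces $N$ to be $\theta$-abelian with $\theta\neq\mathbf 1$. Then $N$ is $p$-adic analytic, and writing $d=d(N)=\cohdim(N)\geq 2$ one has $N\cong\Z_p^{\,d-1}\rtimes\Z_p$ and $[N,N]\cong\Z_p^{\,d-1}$. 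Picking $Z\leq G_2$ with $Z\cong\Z_p$, the closed subgroup $N\times Z\cong N\times\Z_p$ of $G_1\times G_2$ is Bloch-Kato, $p$-adic analytic and non-abelian, hence $\theta'$-abelian by Theorem~B; since a finitely generated $\theta'$-abelian pro-$p$ group $M$ with $\theta'\neq\mathbf 1$ has $[M,M]\cong\Z_p^{\,d(M)-1}$, and $d(N\times\Z_p)=d(N)+1$, this gives $[N\times\Z_p,N\times\Z_p]\cong\Z_p^{\,d}$, whereas in fact $[N\times\Z_p,N\times\Z_p]=[N,N]\times 1\cong\Z_p^{\,d-1}$, two $\Z_p$-modules of different rank — a contradiction. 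By symmetry neither factor can be a powerful non-abelian Bloch-Kato group.

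For (ii) I would deduce it from (i). Assume both $G_i$ are non-abelian; by Theorem~B each is $\theta_i$-abelian or contains a non-abelian free pro-$p$ subgroup, and since a non-abelian $\theta_i$-abelian pro-$p$ group is powerful (for $p$ odd one has $[G,G]\subseteq G^p$ there), the first option for either factor is excluded by (i). Hence each $G_i$ contains a non-abelian free pro-$p$ subgroup $S_i$, and after shrinking we may take $S_1\cong S_2\cong F$ free of rank $2$, so that $F\times F$ is a closed subgroup of $G_1\times G_2$ and must be Bloch-Kato. I would then contradict this by exhibiting inside $F\times F$ a pro-$p$ analogue of the Stallings--Bieri subgroup — for instance the kernel $K$ of a continuous homomorphism $F\times F\to\Z_p$ surjective on each factor — which is topologically finitely generated but \emph{not} finitely presented as a pro-$p$ group, so that $\dim_{\F_p}H^1(K,\F_p)<\infty$ while $\dim_{\F_p}H^2(K,\F_p)=\infty$; then $H^\bullet(K,\F_p)$ cannot be generated in degree one, let alone quadratic. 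So at least one of $G_1,G_2$ is abelian.

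Finally, for $\Z_p\times S$ with $S$ free pro-$p$ I would show directly that every closed subgroup $K$ has quadratic cohomology. Let $\rho\colon\Z_p\times S\to\Z_p$ be the projection and $K_0:=K\cap S=\kernel(\rho|_K)$, a closed (hence free) pro-$p$ subgroup of $S$. If $\rho(K)=1$ then $K=K_0$ is free, with the trivially quadratic cohomology ring $\F_p\oplus H^1(K,\F_p)$. Otherwise $\rho(K)\cong\Z_p$ and, $\Z_p$ being free pro-$p$, $K=K_0\rtimes\overline{\langle t\rangle}$ with $t=(z,s)$ acting on $K_0$ by conjugation by $s\in S$; put $T:=\overline{\langle K_0,s\rangle}\leq S$, a free pro-$p$ group in which $K_0$ is normal with procyclic quotient. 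If $s$ has infinite order modulo $K_0$ then $K\cong T$ is free pro-$p$; if $s^{p^i}\in K_0$ then $\zeta:=(z^{p^i},1)$ lies in $K$, is central, generates a $\Z_p$, and the projection $\Z_p\times S\to S$ identifies $K/\overline{\langle\zeta\rangle}$ with $T$, exhibiting $K$ as a central extension of the free pro-$p$ group $T$ by $\Z_p$; since $\cohdim_p(T)\leq 1$ we have $H^2(T,\Z_p)=0$, so the extension splits and $K\cong\Z_p\times T$. In every case $H^\bullet(K,\F_p)$ is the cohomology of a free pro-$p$ group or of $\Z_p$ times one, hence a tensor product of quadratic algebras, hence quadratic; thus $\Z_p\times S$ is Bloch-Kato. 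The delicate point in the whole argument is the homological finiteness statement invoked in (ii): unlike the rank count in (i) and the subgroup classification just used for $\Z_p\times S$, it genuinely requires the pro-$p$ analogue of Bieri's result that $\kernel(F\times F\to\Z)$ is of type $FP_1$ but not $FP_2$.
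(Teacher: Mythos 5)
Your reduction in part (ii) follows the paper's route exactly: by (i) and Theorem~B both factors would contain a $2$-generated free pro-$p$ subgroup, so everything hinges on showing that $F_2\times F_2$ is not Bloch-Kato, and your witness $K=\kernel(F_2\times F_2\to\Z_p)$ is precisely the paper's fibre product $\Gamma$ in Theorem~\ref{thmdirectproductfree}. The crux, however, is the statement you do not prove: that this $K$ is topologically finitely generated yet not finitely presented, so that $\dim_{\F_p}H^1(K)<\infty$ while $\dim_{\F_p}H^2(K)=\infty$. Appealing to ``the pro-$p$ analogue of Bieri's result'' is not a citation one can make off the shelf: the discrete Stallings--Bieri argument does not transfer formally to the pro-$p$ category, and the paper spends essentially the whole proof of Theorem~\ref{thmdirectproductfree} establishing exactly this point --- it identifies $\Gamma\cong R\rtimes F_2$, passes to the quotient $H=R^{\abel}\rtimes\Z_p$ with $R^{\abel}\cong\Z_p\dbl\Z_p\dbr$, and uses the exact sequence coming from $H^r(\Z_p,H^s(R^{\abel}))$ together with the counting bound of Fact~\ref{modulefact} to get $\dim_{\F_p}\bigl(H^2(R^{\abel})^{\Z_p}\bigr)\geq(p^k-1)/2$ for all $k$, whence $r(H)=\infty$ and so $r(\Gamma)=\infty$. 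Until you supply this (or an equivalent) computation, part (ii) is asserted rather than proved; this is the one genuine gap in your proposal.

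The other two parts are sound. For (i) you take a different, equally valid route to the paper's Proposition~\ref{propdirectprodtheta}: instead of the $\theta$-center argument (a central $\Z_p$ is incompatible with $\widetilde\theta$-abelianity on a torsion-free non-abelian group), you compare $\Z_p$-ranks of commutator subgroups of $N$ and $N\times\Z_p$, which works. One small repair: your claim that a powerful pro-$p$ group contains no non-abelian free closed subgroup is standard only for finitely generated powerful groups (via $p$-adic analyticity); since $G_1$ need not be finitely generated, you should instead invoke Corollary~\ref{corollmetabelian} (powerful Bloch-Kato implies $\theta$-abelian, hence metabelian), which is what the paper's own proof implicitly does --- and, like the paper's tools, this confines (i) and (ii) to odd $p$, as you assume; also your blanket K\"unneth remark for $G_1\times G_2$ needs the finiteness hypothesis of the cited K\"unneth theorem, but it carries no weight in your argument. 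Finally, your treatment of $\Z_p\times S$ is correct and in fact more careful than the paper's proof of Theorem~\ref{thmdirectfrouctZpfree}, which concludes $K\cong\Z_p\times N$ with $N=K\cap S$ ``since $A\leq\Zen(\widetilde G)$'' and thereby overlooks that a complement $t=(z,s)$ acts on $N$ by conjugation through its $S$-coordinate; your finer classification --- $K$ free, or $K\cong\Z_p\times T$ with $T=\overline{\langle K\cap S,\,s\rangle}$, obtained from a split central extension by a free group --- reaches the same conclusion by a sound route.
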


The main reason for these last investigations is the connection
with the Elementary Type conjecture for maximal pro-$p$ Galois groups.

\section{Preliminaries}\label{sec:prelim}

We work in the category of pro-$p$ groups.
Henceforth subgroups are  to be considered closed
and all generators are to be considered topological generators (in the sense of the pro-$p$ topology).
For basic facts on Galois cohomology we refer to \cite{nsw} or \cite{gc}.
We abbreviate $H^k(G)$ for $H^k(G,\F_p)$ with the trivial $G$-action on $\F_p$.
Thus $H^\bullet(G)=\bigoplus_{k\geq0}H^k(G)$ denotes the graded cohomology ring equipped with the cup product $\cup$. 

The first Bockstein homomorphism $\beta: H^1(G)\rightarrow H^2(G)$ is the connecting homomorphism arising from the short exact
sequence of trivial $G$-modules \[0\longrightarrow \Z/p.\Z \longrightarrow \Z/p^2.\Z\longrightarrow \Z/p.\Z\longrightarrow 0.\]
When $p= 2$ one has $\beta(\chi) = \chi\cup\chi$ \cite[Lemma 2.4]{SunilIdoJan}.

If $G$ is finitely generated, we denote by $d(G)$ the minimal number of generators of $G$,
namely $d(G)=\dim(G/\Phi(G))$ as $\F_p$-vector space, where $\Phi(G)$ is the Frattini subgroup of $G$.
In particular, if $d=d(G)$, we say that $G$ is $d$-generated.
Moreover, the rank $\rank(G)$ of $G$ is $\sup\{d(K)|K\clsgp G\}$.
If $G=S/R$ is a minimal presentation for $G$, with $S$ a free pro-$p$ group such that $d(S)=d(G)$,
then the relation rank $r(G)$ is the minimal number of generators of $R$ as a closed normal subgroup of $S$.
Moreover, it is well known that 
\[d(G)=\dim_{\F_p}\left(H^1(G)\right)\quad \text{and} \quad r(G)=\dim_{\F_p}\left(H^2(G)\right)\]
(see \cite[Ch. III \S9]{nsw}).

Finally, ${^xy}=xyx^{-1}$, and $[x,y]={^xy}\cdot y^{-1}$ is the commutator of $x$ and $y$, for $x,y\in G$.

\bigskip

As mentioned in the introduction,
the maximal pro-$p$ Galois group $G_F(p)$ of a field $F$ containing the $p$th roots of unity $\mu_p$ is a Bloch-Kato group.
Indeed if $\cha F=p$ then $G_F(p)$ is a free pro-$p$ group (see \cite[II \S 2.2]{gc}),
which is Bloch-Kato since a free pro-$p$ group has cohomological dimension equal to 1.

Otherwise, for a profinite group $G$ let $\mathcal{O}^p(G)$ be the subgroup
\[\mathcal{O}^p(G)=\langle K\in\text{Syl}_\ell(G)|\ell\neq p\rangle,\]
where $\text{Syl}_\ell(G)$ is the set of the Sylow pro-$\ell$ subgroups; namely $G/\mathcal{O}^p(G)$ is the 
maximal pro-$p$ quotient of $G$ \cite[Proposition 2.1]{thomas}. 

Let $F(p)$ be the maximal $p$-extension of a field $F$ with $\cha F\neq p$.
Then the absolute Galois group of $F(p)$ is $G_{F(p)}=\mathcal{O}^p(G_F)$.
Since $F(p)$ satisfies the hypotesis of the Bloch-Kato conjecture
(i.e., $\cha(F(p))\neq p$ and $\mu_p\subseteq F(p)$),
also the cohomology ring $H^\bullet(\mathcal{O}^p(G_F))$ is quadratic.

Moreover, as $\mathcal{O}^p(G_F)$ is $p$-perfect, $H^1(\mathcal{O}^p(G_F))=0$.
Thus $H^\bullet(\mathcal{O}^p(G_F))=0$.
This implies that in the Lyndon-Hochschild-Serre spectral sequence
arising from $1\rightarrow\mathcal{O}^p(G_F)\rightarrow G_F\rightarrow G_F(p)\rightarrow1$ the terms
\[E_2^{rs}=H^r\left(G_F(p),H^s(\mathcal{O}^p(G_F))\right)\]
vanish for $s>0$, and the spectral sequence collapses at the $E_2$-term.
Hence the inflation map $H^n(G_F(p))\rightarrow H^n(G_F)$ is an isomorphism for every $n\geq0$ \cite[Lemma 2.1.2]{nsw}.
Thus $H^\bullet(G_F(p))$ is quadratic as $H^\bullet(G_F)$ is quadratic by the Bloch-Kato conjecture.

Note that all the $p$-Sylow subgroups of an absolute Galois group -- for any prime $p$ -- are Bloch-Kato pro-$p$ groups
(see \cite[\S 9]{SunilIdoJan}).

Bloch-Kato pro-$p$ groups have been defined and studied the first time in \cite{bk}.
A fundamental feature of Bloch-Kato groups is the following:
if $p$ is odd then a Bloch-Kato pro-$p$ group is \textit{torsion-free} \cite[Proposition 2.3]{bk},
whereas the only non-trivial finite (pro-)2 Bloch-Kato groups are the elementary abelian 2-groups \cite[Proposition 2.4]{bk}.

If we keep in mind the Galois-theoretical background,
this fact can be seen as an analogue of the celebrated Artin-Schreier theorem,
which states that the only non-trivial finite subgroup of an absolute Galois group is $C_2$.

\section{Locally powerful and oriented pro-$p$ groups}\label{sec:locpow}
\subsection{Powerful pro-$p$ groups and Lie algebras}

A pro-$p$ group $G$ is said to be {\it powerful} if
\[[G,G]\subseteq\left\{\begin{array}{cc}G^p & \text{for }p\text{ odd},\\G^4 & \text{for }p=2,\end{array}\right.\]
where $[G,G]$ is the closed subgroup of $G$ generated by the commutators of $G$,
and $G^p$ is the closed subgroup of $G$ generated by the $p$-powers of the elements of $G$.

Let $\lambda_i(G)$ be the elements of the lower $p$-descending central series of the pro-$p$ group $G$,
namely $\lambda_1(G)=G$ and $\lambda_{i+1}(G)=\lambda_i(G)^p[\lambda_i(G),G]$.
In particular, $\lambda_2(G)$ is the Frattini subgroup $\Phi(G)$.
Then, a pro-$p$ group $G$ is called {\it uniformly powerful}, or simply {\it uniform},
if $G$ is finitley generated, powerful, and
\[\left|\lambda_i(G):\lambda_{i+1}(G)\right|=|G:\Phi(G)|\quad\text{for all }i\geq1.\]
Thus a finitely generated powerful group is uniform if, and only if, it is torsion-free (see \cite[Theorem 4.5]{analytic}).

Recall that a pro-$p$ group $G$ is called locally powerful
if every finitely generated closed subgroup $K$ of $G$ is powerful.
Moreover, for uniform pro-$p$ groups, one has the following property:

\begin{prop}[\cite{analytic}, Proposition 4.32]\label{proppresuniform}
 Let $G$ be a $d$-generated uniform pro-$p$ group, and let $\{x_1,\ldots,x_d\}$ be a generating set for $G$.
Then $G$ has a presentation $G=\langle x_1,\ldots,x_d|R\rangle$ with relations
\begin{equation}\label{proppresuniform1}
 R=\left\{[x_i,x_j]=x_1^{\lambda_1(i,j)}\cdots x_d^{\lambda_d(i,j)}, 1\leq i<j\leq d\right\},
\end{equation}
and for all $i,j$ one has $\lambda_n(i,j)\in p.\Z_p$ if $p$ is odd, and $\lambda_n(i,j)\in 4.\Z_2$ if $p=2$.
\end{prop}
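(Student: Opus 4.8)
The plan is to exploit the defining property of a uniform pro-$p$ group, namely that the lower $p$-central series has all quotients of the same rank $d$, so that the associated graded object is a free module and there are no "hidden" relations beyond those forced by the Frattini quotient. First I would recall that for a uniform group $G$ of dimension $d$, every element of $G$ can be written uniquely in the coordinate form $x_1^{a_1}\cdots x_d^{a_d}$ with $a_i\in\Z_p$ (this is the standard coordinate system of the first kind, coming from the fact that $G$ is $p$-adic analytic with the $x_i$ forming a topological basis; see \cite[Ch. 4]{analytic}). In particular, for each pair $i<j$ the commutator $[x_i,x_j]$ lies in $\lambda_2(G)=\Phi(G)=G^p[G,G]$, hence can be written as $x_1^{\lambda_1(i,j)}\cdots x_d^{\lambda_d(i,j)}$ with $\lambda_n(i,j)\in p\Z_p$ when $p$ is odd, and one checks that the stronger $\lambda_n(i,j)\in 4\Z_2$ holds when $p=2$ because for $p=2$ powerfulness gives $[G,G]\subseteq G^4$, so the commutators actually lie in $\lambda_3(G)$. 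This pins down the \emph{form} of the relations.

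The second step is to show that these relations \emph{suffice}, i.e.\ that $G=\langle x_1,\dots,x_d\mid R\rangle$ with $R$ as in \eqref{proppresuniform1} is a genuine pro-$p$ presentation and not merely a set of relations holding in $G$. For this I would count: the relation rank satisfies $r(G)=\dim_{\F_p}H^2(G)$, and for a uniform (hence $p$-adic analytic, hence Poincar\'e duality) group of dimension $d$ one has $\dim_{\F_p}H^2(G)=\binom{d}{2}$. On the other hand, the set $R$ displayed above has exactly $\binom{d}{2}$ elements, one for each pair $i<j$. So if I let $\tilde G=\langle x_1,\dots,x_d\mid R\rangle$ be the pro-$p$ group \emph{defined} by this presentation, there is a canonical surjection $\tilde G\twoheadrightarrow G$ (since the relations hold in $G$), it induces an isomorphism on $H^1$ (both are $d$-generated), and $r(\tilde G)\le\binom{d}{2}=r(G)$. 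A standard argument with the five-term exact sequence of the extension $1\to N\to\tilde G\to G\to 1$, together with the fact that $G$ has $\dim_{\F_p}H^2(G)$ exactly equal to the number of relations, forces $N$ to be trivial: more precisely, the inflation map $H^2(G)\to H^2(\tilde G)$ together with $d(\tilde G)=d(G)$ and $r(\tilde G)\le r(G)$ shows $\tilde G$ has the minimal possible relation rank, and since the composite $H^2(G)\to H^2(\tilde G)\to H^2(\tilde G)$ respects the relation module, the map $\tilde G\to G$ must be an isomorphism. (Alternatively, one invokes that a pro-$p$ group with $d$ generators and $\binom d2$ relations in which every relator has the displayed "commutator = deep word" shape is automatically uniform of dimension $d$, matching $G$ generator-for-generator.)

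The main obstacle I anticipate is the second step rather than the first: writing down the form of the relations is essentially bookkeeping with the coordinate system, but \emph{verifying that no further relations are needed} requires knowing $r(G)=\binom d2$, which rests on the Poincar\'e-duality (hence equidimensional $H^\bullet$) structure of uniform pro-$p$ groups, and one must be careful that the surjection $\tilde G\to G$ is an isomorphism and not just an isogeny — i.e.\ that the kernel, a priori a pro-$p$ group, is actually trivial. The cleanest way around this is to observe that $\tilde G$, being $d$-generated with relators of the form $[x_i,x_j]\in\Phi(\tilde G)^{?}$ matching the deep-word constraint, is itself powerful and $d$-generated, hence $p$-adic analytic of dimension $\le d$; comparing with $\dim G=d$ and using that the surjection $\tilde G\to G$ cannot drop dimension forces equality, and a finitely generated pro-$p$ group surjecting onto another of the same dimension with the same $H^1$ must do so isomorphically when both are torsion-free $p$-adic analytic. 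This last point is where I would spend the most care, but it is routine given \cite[Theorem 4.5]{analytic} and the cohomological characterisations recalled in the Preliminaries.
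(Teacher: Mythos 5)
Your first step is fine: in a uniform group the coordinate homeomorphism (Remark \ref{remarklocpwfetLie}(i)) identifies $\lambda_2(G)$ (resp.\ $\lambda_3(G)$ for $p=2$) with $(p\Z_p)^d$ (resp.\ $(4\Z_2)^d$), so each $[x_i,x_j]$ has the displayed form. The genuine gap is in your second step, i.e.\ in showing that the canonical surjection $\pi\colon\tilde G=\langle x_1,\dots,x_d\mid R\rangle\twoheadrightarrow G$ is injective. The inference ``$d(\tilde G)=d(G)$, $r(\tilde G)\le\binom{d}{2}=r(G)$, surjection exists, hence isomorphism'' is not valid: the free pro-$p$ group $F_2$ surjects onto $\Z_p^2$ with $d(F_2)=d(\Z_p^2)=2$ and $r(F_2)=0\le 1=r(\Z_p^2)$, yet the kernel is enormous. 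What the five-term sequence actually gives for $N=\ker\pi\le\Phi(\tilde G)$ is an exact sequence $0\to H^1(N)^{\tilde G}\to H^2(G)\to H^2(\tilde G)$, so you would need \emph{injectivity of inflation in degree two} -- equivalently, that the $\binom{d}{2}$ displayed relators are linearly independent in the relation module $R_F/R_F^p[R_F,F]$ of a minimal presentation $G=F/R_F$ -- and dimension counting alone (even granting $r(G)=\binom d2$ via Lazard/Poincar\'e duality, a heavy input) does not provide this; your phrase ``respects the relation module'' is exactly the unproved point. Your fallback route has the same hole in a different place: powerfulness of $\tilde G$ does give $\dim\tilde G\le d$, and the surjection forces $\dim\tilde G=d$ and $N$ finite, but the final lemma you invoke needs $\tilde G$ to be \emph{torsion-free}, and torsion-freeness (i.e.\ uniformity) of $\tilde G$ is precisely what is not known a priori -- a $d$-generated powerful pro-$p$ group defined by relators of this shape need not be uniform, and nothing in your argument excludes a nontrivial finite $p$-kernel.

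Note that the paper itself gives no proof (the statement is quoted from \cite[Proposition 4.32]{analytic}), and the proof there closes this gap without any cohomology: since $\tilde G$ is a finitely generated powerful pro-$p$ group, it equals the product $\overline{\langle x_1\rangle}\cdots\overline{\langle x_d\rangle}$ of the procyclic subgroups generated by the $x_i$, so the map $\phi\colon\Z_p^d\to\tilde G$, $(\mu_1,\dots,\mu_d)\mapsto x_1^{\mu_1}\cdots x_d^{\mu_d}$, is surjective; the composite $\pi\circ\phi\colon\Z_p^d\to G$ is the coordinate bijection of the uniform group $G$, hence $\phi$ is injective, hence bijective, and therefore $\pi$ is a continuous bijection of compact groups, i.e.\ an isomorphism. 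If you insist on a cohomological argument (for $p$ odd), the correct substitute for your counting step is to pair the relators against the cup products $\chi_i\cup\chi_j$ to verify that their images form a basis of $(R_F/R_F^p[R_F,F])^*\cong H^2(G)$; as written, your proposal does not establish injectivity of $\pi$.
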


If $G$ is a uniform pro-$p$ group, then it is possible to associate a $\Z_p$-Lie algebra
$L=\log(G)$ to it (see \cite[\S 4.5]{analytic} and \cite{ilani}),
i.e., $L$ is the $\Z_p$-free module generated by the generators of $G$,
equipped with the sum
\begin{equation}\label{deflie1}
 x+y=\lim_{n\rightarrow\infty}x+_ny,\quad x+_ny=\left(x^{p^n}y^{p^n}\right)^{p^{-n}},
\end{equation}
and the Lie brackets
\begin{equation}\label{deflie2}
(x,y)=\lim_{n\rightarrow\infty}(x,y)_n,\quad(x,y)_n=\left[x^{p^n},y^{p^n}\right]^{p^{-2n}}.                     
\end{equation}

In analogy to pro-$p$ groups we say that a $\Z_p$-Lie algebra $L$ is {\it powerful} if $L\cong\Z_p^d$
for some $d>0$ as $\Z_p$-module, and the derived algebra $(LL)$ is contained in $p.L$ (resp. in $4.L$ if $p=2$).
It is well known that for a uniform group $G$ the Lie algebra $\log(G)$ is powerful.

\begin{rem}\label{remarklocpwfetLie}
\begin{enumerate}
\item[(i)] If $G=\langle x_1,\ldots,x_n\rangle$ is uniform, then it is possible to write every element $g\in G$
as $g=x_1^{\lambda_1}\cdots x_n^{\lambda_n}$, with $\lambda_i\in\Z_p$, in a unique way.
Thus the map
\[G\longrightarrow\log(G),\quad x_1^{\lambda_1}\cdots x_n^{\lambda_n}\longmapsto\lambda_1.x_1+\ldots+\lambda_n.x_n\]
is a homeomorphism (in the $\Z_p$-topology) \cite[Theorem 4.9]{analytic}.
\item[(ii)] If $G$ is locally powerful and torsion-free, then every closed subfroup $K$ of $G$ is again a uniform group.
Thus one can construct the Lie algebra $\log(K)$, which is in fact a subalgebra of $\log(G)$.
In particular, the $\Z_p$-submodule $\spa_{\Z_p}\{x\in \Omega\}$ of $\log(G)$ is closed under Lie brackets
for every subset $\Omega\subseteq G$.
\end{enumerate}
\end{rem}

\subsection{Oriented pro-$p$ groups}
Let $(G,\theta)$, $\theta\colon G\rightarrow\Z_p^\times$, be an oriented pro-$p$ group.
For every closed subgroup $K\clsgp G$, $(K,\theta|_K)$ is again an oriented pro-$p$.
Notice that the image of $\theta$ is a pro-$p$ subgroup of $\Z_p^\times$,
thus $\image(\theta)\subseteq1+p.\Z_p$.

The following fact is straightforward.

\begin{fact}
 Let $G$ be a pro-$p$ group, and let $\theta,\theta'\colon G\rightarrow \Z_p^\times$, $\theta\neq\theta'$, be two distinct
continuous homomorphisms such that $G$ is both $\theta$- and $\theta'$-abelian.
Then $G$ is a closed subgroup of $\Z_p^\times$.
\end{fact}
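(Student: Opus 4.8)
The plan is to use the hypothesis in two directions and show that in either case $G$ embeds into $\Z_p^\times$, which is itself abelian. Since $G$ is both $\theta$-abelian and $\theta'$-abelian, we have $\Zen_\theta(G)=\ker(\theta)$ and $\Zen_{\theta'}(G)=\ker(\theta')$; in particular $\ker(\theta)$ and $\ker(\theta')$ are both abelian normal subgroups, and for every $g\in G$ and every $h\in\ker(\theta)$ one has ${}^g h=h^{\theta(g)}$, and likewise with $\theta'$. The first step is to pin down $K:=\ker(\theta)\cap\ker(\theta')$. On $K$ both conjugation formulas apply, so for $g\in K$ and $h\in K$ we get $h=h^{\theta(g)}$ with $\theta(g)\in 1+p\Z_p$; since $G$ is torsion-free this forces... actually we do not even need torsion-freeness here: $h^{\theta(g)-1}=1$ for all $h\in K$, so either $K=1$ or $\theta(g)=1$ for all $g\in K$ — but the latter just says $K\subseteq\ker\theta$, which is automatic. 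The cleaner route is: for $g\in\ker(\theta)$ and arbitrary $h\in\ker(\theta)$, abelianness of $\ker(\theta)$ gives ${}^g h=h$; but if moreover $g\notin\ker(\theta')$ then the $\theta'$-relation applied to... this needs care, so let me restructure.

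First I would show $\ker(\theta)=\ker(\theta')$. Suppose not; pick $h\in\ker(\theta)\setminus\ker(\theta')$. Since $h\in\ker(\theta)$ and $G$ is $\theta$-abelian, $h\in\Zen_\theta(G)$, so $h$ is centralized by... no — $h$ commutes with $\ker(\theta)$ only. Instead use: $\ker(\theta')$ is abelian, and consider the element $h$ acting by conjugation on $\ker(\theta')$. For any $k\in\ker(\theta')$ we have ${}^k h = h^{\theta'(k)}$... only if $h\in\ker(\theta')$, which it is not. Rather, for $h$ itself we have (from $\theta'$-abelianness applied with roles swapped) no direct statement. So the usable relation is the other one: since $\ker(\theta')=\Zen_{\theta'}(G)$, every $k\in\ker(\theta')$ satisfies ${}^g k=k^{\theta'(g)}$ for all $g\in G$; taking $g=h$ gives ${}^h k = k^{\theta'(h)}$ for all $k\in\ker(\theta')$. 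Symmetrically ${}^k h=h^{\theta(k)}=h$ for $k\in\ker(\theta)$. Combining on $\ker(\theta)\cap\ker(\theta')$: writing $x$ for a generic element there, $x={}^x x=x$, no content. The productive computation is to take $h\in\ker\theta$, $k\in\ker\theta'$ and compute ${}^h k$ two ways or to use that $G/\ker(\theta)$ and $G/\ker(\theta')$ are subgroups of $\Z_p^\times$ hence abelian, so $[G,G]\subseteq\ker(\theta)\cap\ker(\theta')=:K$, and $K$ is abelian; then for $h,k\in K$ and any $g$, ${}^g h=h^{\theta(g)}=h^{\theta'(g)}$, forcing $h^{\theta(g)-\theta'(g)}=1$ for all $h\in K$, $g\in G$. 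If $\theta\neq\theta'$ there is $g$ with $\theta(g)\neq\theta'(g)$, and since $\theta(g)-\theta'(g)\in p\Z_p$ is a nonzero element of $\Z_p$, $h^{\theta(g)-\theta'(g)}=1$ in the torsion-free group $K$ forces $h=1$. Hence $K=1$, i.e. $[G,G]=1$ and $G$ is abelian, so $G\cong G/\ker(\theta)\hookrightarrow\Z_p^\times$ (injectivity since $\ker\theta\subseteq\Zen_\theta(G)=\ker\theta$ gives nothing — but $K=1$ and $\ker(\theta)$ abelian with $\ker(\theta)=\Zen_\theta(G)$ acting trivially... $\ker(\theta)\subseteq K=1$? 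Not quite: $K=\ker\theta\cap\ker\theta'$, and we've shown $K=1$, but $\ker\theta$ could be larger).

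So the final step handles the case $K=1$: then $\theta\times\theta'\colon G\to\Z_p^\times\times\Z_p^\times$ is injective, exhibiting $G$ as a subgroup of an abelian group, hence $G$ is abelian; and then $\ker(\theta)=\Zen_\theta(G)$ together with $G$ abelian gives $\ker(\theta)$ central, but more simply $[G,G]=1\subseteq\ker\theta$ already, and restricting $\theta$ to... actually once $G$ is abelian, both $\ker\theta$ and $\ker\theta'$ are characterized by ${}^g h=h$ and $=h^{\theta(g)}$, forcing $\theta(g)=1$ whenever... hmm, this shows $\theta\equiv\mathbf 1$ on $G$? No: $\theta$-abelian with $G$ abelian means $\ker(\theta)=\Zen_\theta(G)=\{h\in\ker\theta : ghg^{-1}=h^{\theta(g)}\}=\{h\in\ker\theta: h=h^{\theta(g)}\,\forall g\}$; since $G$ torsion-free this is $\{h\in\ker\theta:\theta(g)=1\,\forall g \text{ or } h=1\}$, so either $\theta\equiv\mathbf 1$ and $\ker\theta=G$, or $\ker\theta=1$. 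In the second case $G\hookrightarrow\Z_p^\times$ directly. In the first case $\theta=\mathbf 1\neq\theta'$, so by the same reasoning applied to $\theta'$ we get $\ker\theta'=1$ and again $G\hookrightarrow\Z_p^\times$; only the degenerate possibility $\theta=\theta'=\mathbf 1$ is excluded by hypothesis. The main obstacle I expect is bookkeeping the asymmetry in the definition of $\Zen_\theta(G)$ — the conjugation relation is only asserted for $h$ in the kernel — so the clean move is to first establish $[G,G]\subseteq\ker\theta\cap\ker\theta'$ via the two quotient maps into the abelian group $\Z_p^\times$, and only then exploit the conjugation formulas on that small abelian subgroup together with torsion-freeness.
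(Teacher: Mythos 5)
Your core argument is the natural one and, since the paper offers no proof of this Fact (it is simply labelled ``straightforward''), it is presumably the intended one: $\Z_p^\times$ is abelian, so $[G,G]\subseteq\ker\theta\cap\ker\theta'=:K$; every $h\in K$ lies in both $\Zen_\theta(G)$ and $\Zen_{\theta'}(G)$, so $h^{\theta(g)}=ghg^{-1}=h^{\theta'(g)}$ and hence $h^{\theta(g)-\theta'(g)}=1$ for all $g$; choosing $g$ with $\theta(g)\neq\theta'(g)$ kills $K$, so $G$ is abelian; finally $\theta$-abelianness of an abelian $G$ forces $\ker\theta=1$ unless $\theta=\mathbf 1$, and since $\theta\neq\theta'$ at least one of the two is nontrivial and embeds $G$ into $\Z_p^\times$ (the image being closed because $G$ is compact -- a point worth stating).

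There is, however, a genuine gap as written: you invoke torsion-freeness twice (``in the torsion-free group $K$'', ``since $G$ torsion-free''), although it is not among the hypotheses of the Fact and you never justify it -- indeed earlier in the same paragraph you remark that torsion-freeness is not needed. It cannot be dispensed with: if $h$ has order $p$ and $\lambda\in p\Z_p\smallsetminus\{0\}$, then $h^{\lambda}=1$ gives no information, and in fact the statement itself fails for groups with torsion. For $p$ odd take $G=\Z_p\times\Z/p=\langle x\rangle\times\langle t\rangle$ with $\theta(x)=1+p$, $\theta'(x)=1+p^2$ and $\theta(t)=\theta'(t)=1$: both kernels equal $\langle t\rangle$, and since $t^p=1$ while $\theta(g)-1,\theta'(g)-1\in p\Z_p$ for all $g$, the group $G$ is both $\theta$- and $\theta'$-abelian with $\theta\neq\theta'$; yet $G$ has $p$-torsion, whereas any closed pro-$p$ subgroup of $\Z_p^\times$ lies in $1+p\Z_p\cong\Z_p$ and is torsion-free, so $G$ is not a closed subgroup of $\Z_p^\times$. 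So the Fact tacitly requires torsion-freeness (in the paper it is only applied to uniform, hence torsion-free, groups), and your proof must either add this hypothesis explicitly or explain where it comes from; with it added, your argument is complete. Separately, the exposition needs cleaning: the several abandoned attempts and self-corrections should be cut, leaving the three steps above.
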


The following property will turn out to be useful for our purpose.

\begin{prop}\label{facthetabelian}
Let $(G,\theta)$ be an oriented $d$-generated pro-$p$ group $G$.
Suppose further that $\image(\theta)\clsgp1+4.\Z_2$ if $p=2$.
Then $G$ is $\theta$-abelian if, and only if, there exists a presentation
\begin{equation}\label{presentationthetabelian}
G=\left\langle x_1,\ldots,x_d\left|[x_1,x_i]=x_i^\lambda,[x_i,x_j]=1,2\leq i,j,\leq d\right.\right\rangle,
\end{equation}
where $\lambda=\theta(x_1)-1$ (if $p=2$ then $\lambda\in4.\Z_2$).
\end{prop}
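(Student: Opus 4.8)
The plan is to prove the two implications separately. Throughout put $A:=\ker(\theta)$, and recall that, $\image(\theta)$ being pro-$p$, one has $\image(\theta)\subseteq 1+p\Z_p$; the standing hypothesis for $p=2$ guarantees in addition that $\image(\theta)$ is torsion-free (automatic for $p$ odd, as $1+p\Z_p\cong\Z_p$). Since $\Zen_\theta(G)\subseteq A$ always holds, in each direction it suffices to establish the reverse inclusion together with a description of $A$.

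\emph{Necessity.} Suppose $A=\Zen_\theta(G)$. Taking $g\in A$ in the relation ${^g h}=h^{\theta(g)}$ shows $A$ is abelian, while $\theta$ induces an injection $G/A\hookrightarrow\image(\theta)$, so $G/A$ is procyclic; if it is trivial then $G$ is abelian, hence (being torsion-free) free abelian, and \eqref{presentationthetabelian} holds with $\lambda=0$, so assume $G/A\cong\Z_p$. Pick $x_1\in G$ lifting a generator of $G/A$ and set $\lambda:=\theta(x_1)-1$ (so $\lambda\in p\Z_p$, resp.\ $\lambda\in4\Z_2$ for $p=2$); as $\theta(x_1)$ has infinite order, $\overline{\langle x_1\rangle}\cap A=1$ and $G=\overline{\langle x_1\rangle}\ltimes A$, with $x_1$ acting on $A$ as the \emph{scalar} $\theta(x_1)\in\Z_p$. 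Hence $[x_1,a]=a^{\lambda}$ for all $a\in A$, so $[G,G]=\overline{A^{\lambda}}\supseteq\overline{A^{p}}$; since $A/[G,G]$ embeds in the finitely generated group $G^{\mathrm{ab}}$, the group $A/\overline{A^{p}}$ is finite, so the compact torsion-free $\Z_p$-module $A$ is free of finite rank, and the rank is $d-1$ by the Frattini count $d(G)=1+\dim_{\F_p}A/\overline{A^{p}}$. Choosing a $\Z_p$-basis $x_2,\dots,x_d$ of $A$, the relations \eqref{presentationthetabelian} hold; and since the pro-$p$ group defined by \eqref{presentationthetabelian} is visibly the semidirect product $\Z_p\ltimes\Z_p^{d-1}$ with the same scalar action, the canonical epimorphism from it onto $G$ is an isomorphism (both being finitely generated pro-$p$ of the same isomorphism type), so \eqref{presentationthetabelian} is a presentation of $G$.

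\emph{Sufficiency.} Given a presentation \eqref{presentationthetabelian} with $\lambda=\theta(x_1)-1$, we may assume $\lambda\neq0$ (if $\lambda=0$ the presentation forces $G\cong\Z_p^{d}$, treated directly). Let $A:=\overline{\langle x_2,\dots,x_d\rangle}$; the relations $[x_i,x_j]=1$ make $A$ abelian, and $x_1x_ix_1^{-1}=x_i^{1+\lambda}\in A$ makes $A$ normal with $G/A$ procyclic on $\bar x_1$. Applying $\theta$ to $[x_1,x_i]=x_i^{\lambda}$ gives $\theta(x_i)^{\lambda}=1$, hence $\theta(x_i)=1$ for $i\geq2$ by torsion-freeness of $\image(\theta)$, so $A\subseteq\ker(\theta)$; and $\theta(\bar x_1)=1+\lambda\neq1$ has infinite order, so $G/A\hookrightarrow\image(\theta)$, whence $A=\ker(\theta)$. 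Finally, for $h\in A$ and any $g=x_1^{a}b$ with $b\in A$, the maps $h\mapsto x_1hx_1^{-1}$ and $h\mapsto h^{1+\lambda}$ are endomorphisms of the abelian group $A$ agreeing on $x_2,\dots,x_d$, so ${^g h}={^{x_1^{a}}h}=h^{(1+\lambda)^{a}}=h^{\theta(g)}$; thus $A\subseteq\Zen_\theta(G)$, and together with $\Zen_\theta(G)\subseteq A$ this gives $\Zen_\theta(G)=\ker(\theta)$, i.e.\ $G$ is $\theta$-abelian.

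\emph{Main obstacle.} The substantive part is necessity: identifying $\ker(\theta)$ as a \emph{free} $\Z_p$-module of rank exactly $d-1$ and checking that the commutator relations in \eqref{presentationthetabelian} are a full set of relations — in effect, controlling the relation module of $1\to\ker(\theta)\to G\to\Z_p\to1$. This relies on the observation that $\theta$-abelianness forces a scalar action of $G/\ker(\theta)$ on $\ker(\theta)$, on torsion-freeness of $G$ (without which $\ker(\theta)$ could carry torsion and no presentation of the stated shape exists), and — for $p=2$ — on the hypothesis $\image(\theta)\clsgp1+4\Z_2$, which removes the torsion unit $-1\in\Z_2^{\times}$. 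Everything else is routine bookkeeping.
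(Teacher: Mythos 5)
Your argument is correct in substance and follows essentially the same route as the paper's proof: split $G$ over the procyclic group $\image(\theta)$ (the paper invokes projectivity of $\Z_p$ to split $1\to\Zen_\theta(G)\to G\to\image(\theta)\to1$, you instead lift a generator $x_1$ and use that $\theta(x_1)$ has infinite order -- the same thing), observe that $\theta$-abelianness makes $x_1$ act on $\ker(\theta)$ by the scalar $\theta(x_1)$, and read off the presentation; conversely identify $\overline{\langle x_2,\ldots,x_d\rangle}$ with $\ker(\theta)=\Zen_\theta(G)$. You are in fact more careful than the paper on the key point that $\ker(\theta)$ is a finitely generated free $\Z_p$-module of rank $d-1$ (the paper simply asserts that $A\ltimes\Zen_\theta(G)$ has the presentation \eqref{presentationthetabelian}), and your Hopfian argument for promoting the epimorphism from the presented group to an isomorphism is fine. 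One local slip: the inclusion you write, $[G,G]=\overline{A^{\lambda}}\supseteq\overline{A^{p}}$, goes the wrong way and does not support the next sentence; what you need (and what is true, since $\lambda\in p\Z_p$) is $[G,G]=\overline{A^{\lambda}}\subseteq\overline{A^{p}}$, so that $A/\overline{A^{p}}$ is a quotient of the finitely generated exponent-$p$ group $A/[G,G]$ and hence finite.

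Two further caveats, both of which you flag yourself and both of which are really defects of the statement rather than of your argument. First, torsion-freeness of $\ker(\theta)$ (or of $G$) is nowhere among the hypotheses, and without it the stated presentation can fail (e.g.\ $\Z_p\ltimes\Z/p^2$ with the scalar action $1+p$ is $\theta$-abelian); the paper's proof tacitly assumes it as well, and the proposition is only applied to uniform, hence torsion-free, groups. Second, in the sufficiency direction your dismissal ``if $\lambda=0$, treated directly'' is not harmless for the \emph{given} $\theta$: if $\theta(x_1)=1$ but $\theta(x_i)\neq1$ for some $i\geq2$, then $G\cong\Z_p^d$ does have a presentation \eqref{presentationthetabelian} with $\lambda=\theta(x_1)-1=0$, yet $\Zen_\theta(G)=1\neq\ker(\theta)$. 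The paper avoids this by constructing a fresh orientation from the presentation (which is the form of the converse actually used later), rather than working with the prescribed $\theta$ as you do; if you want the literal biconditional for the given $\theta$, the $\lambda=0$ case must be restricted to $\theta\equiv\mathbf{1}$.
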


\begin{proof}
Let $G$ be $\theta$-abelian, and put $A=\image(\theta)\clsgp1+p.\Z_p$.
By hypotesis, $A$ is cyclic and torsion-free, i.e., either $A\cong\Z_p$ or $A=1$.
In the latter case $G=\Zen_\theta(G)$, namely, $G$ is abelian.
Otherwise one has the short exact sequence
\begin{displaymath}\xymatrix{1\ar[r] & \Zen_\theta(G)\ar[r] & G \ar[r]^\theta & A \ar[r] & 1,}\end{displaymath}
which splits since $\Z_p$ is a projective pro-$p$ group.
This implies that $G\cong A\ltimes\Zen_\theta(G)$, where the action of $A$ on $\Zen_\theta(G)$ is induced by $\theta$.
Therefore $A\ltimes\Zen_\theta(G)$ has a presentation (\ref{presentationthetabelian}),
where $d=d(\Zen_\theta(G))+1$.

Conversely, suppose $G$ is a pro-$p$ group with presentation (\ref{presentationthetabelian}).
Then one may construct an orientation $\theta\colon G\rightarrow \Z^\times$
such that $\theta(x_1)=1+\lambda$ and $\theta(x_i)=1$ for $i=2,\ldots,d$.
Then $\Zen_\theta(G)$ is generated by $x_2,\ldots,x_d$, and $G$ is $\theta$-abelian.
\end{proof}

\subsection{Oriented $\Z_p$-Lie algebras}
In analogy, we call a $\Z_p$-Lie algebra $L$ together with a continuous homomorphism of Lie algebras
$\theta_L\colon L\to \Z_p$, $\image(\theta_L)\subseteq p.\Z_p$, an {\it oriented $\Z_p$-Lie algebra}.
Thus also in this case one may define the {\it $\theta_L$-center} of $L$ to be the ideal
\[\Zen_{\theta_L}(L)=\left\{v\in\kernel(\theta_L)\;|\;\adj x(v)=\theta_L(x).v\text{ for all }x\in L\right\}.\]
Then $\Zen_{\theta_L}(L)$ is an abelian subalgebra of $L$.
If $\Zen_{\theta_L}(L)=\kernel(\theta_L)$, then we call $L$ a \textit{$\theta_L$-abelian $\Z_p$-Lie algebra}.

The following fact is straightforward.

\begin{fact}\label{facLthetabelian}
A $\Z_p$-Lie algebra $L$ of rank $d$, together with an orientation $\theta_L$, is $\theta_L$-abelian if,
 and only if, $L$ has a basis $\{v_i,\ldots,v_d\}$ such that
 $(v_1,v_i)=\lambda.v_i$ and $(v_i,v_j)=0$ for all $1<i,j\leq d$, where $\lambda=\theta_L(v_1)$
 (if $p=2$ then $\lambda\in4.\Z_2$).
\end{fact}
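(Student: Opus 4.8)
The statement to prove is Fact~\ref{facLthetabelian}, a Lie-algebra analogue of Proposition~\ref{facthetabelian}. The plan is to mimic the group-theoretic proof almost verbatim, exploiting the fact that over $\Z_p$ a finitely generated torsion-free module is free and that the relevant short exact sequence of $\Z_p$-modules splits automatically.

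First I would prove the ``only if'' direction. Assume $L$ is $\theta_L$-abelian, so that $\Zen_{\theta_L}(L)=\kernel(\theta_L)$. Since $\image(\theta_L)\subseteq p.\Z_p$ is a submodule of $\Z_p$, it is either $0$ or a free $\Z_p$-module of rank $1$. If $\image(\theta_L)=0$ then $L=\kernel(\theta_L)$ is abelian, and any $\Z_p$-basis $\{v_1,\ldots,v_d\}$ works with $\lambda=\theta_L(v_1)=0$. Otherwise one has the short exact sequence of $\Z_p$-modules
\begin{displaymath}\xymatrix{0\ar[r] & \kernel(\theta_L)\ar[r] & L \ar[r]^{\theta_L} & \image(\theta_L) \ar[r] & 0,}\end{displaymath}
which splits because $\image(\theta_L)\cong\Z_p$ is free, hence projective. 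Pick $v_1\in L$ mapping to a generator of $\image(\theta_L)$ and set $\lambda=\theta_L(v_1)$ (which lies in $p.\Z_p$, and in $4.\Z_2$ when $p=2$ by the hypothesis $\image(\theta_L)\clsgp1+4.\Z_2$, transported to the additive side). Then $L=\Z_p v_1\oplus\kernel(\theta_L)$ as a $\Z_p$-module; choosing any basis $\{v_2,\ldots,v_d\}$ of $\kernel(\theta_L)$ gives a basis $\{v_1,\ldots,v_d\}$ of $L$. For $1<i,j\leq d$ the vectors $v_i,v_j$ lie in $\Zen_{\theta_L}(L)$, which is an abelian subalgebra, so $(v_i,v_j)=0$; and $(v_1,v_i)=\adj v_1(v_i)=\theta_L(v_1).v_i=\lambda.v_i$ because $v_i\in\Zen_{\theta_L}(L)$. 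This is exactly the required bracket relation.

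For the ``if'' direction, suppose $L$ has a basis $\{v_1,\ldots,v_d\}$ with $(v_1,v_i)=\lambda.v_i$ and $(v_i,v_j)=0$ for $1<i,j\leq d$, where $\lambda\in p.\Z_p$ (resp. $4.\Z_2$). Define $\theta_L\colon L\to\Z_p$ by $\theta_L(v_1)=\lambda$ and $\theta_L(v_i)=0$ for $i\geq2$, extended $\Z_p$-linearly; one must check this is a Lie-algebra homomorphism, i.e. that $\theta_L$ kills all brackets, which is immediate from the given relations since every bracket is a $\Z_p$-multiple of some $v_i$ with $i\geq2$. Clearly $\kernel(\theta_L)=\spa_{\Z_p}\{v_2,\ldots,v_d\}$ (using $\lambda\neq0$, the degenerate case $\lambda=0$ being trivial since then $L$ is abelian). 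It remains to see $\kernel(\theta_L)\subseteq\Zen_{\theta_L}(L)$: a general $x=\mu_1 v_1+\cdots+\mu_d v_d$ acts on $v_i$ ($i\geq2$) by $\adj x(v_i)=\mu_1(v_1,v_i)=\mu_1\lambda.v_i=\theta_L(x).v_i$, and this extends linearly to all of $\kernel(\theta_L)$, so each $v_i$ with $i\geq2$ lies in $\Zen_{\theta_L}(L)$. Hence $\kernel(\theta_L)=\Zen_{\theta_L}(L)$ and $L$ is $\theta_L$-abelian.

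I do not expect any serious obstacle here: the proof is a transcription of Proposition~\ref{facthetabelian} into the linear setting, and the only points requiring a line of care are the verification that the candidate $\theta_L$ is a genuine Lie-algebra homomorphism (handled by noting every bracket lands in $\spa_{\Z_p}\{v_2,\ldots,v_d\}=\kernel(\theta_L)$) and the bookkeeping of the $p=2$ constraint $\lambda\in4.\Z_2$, which simply records the additive shadow of the multiplicative hypothesis $\image(\theta)\clsgp1+4.\Z_2$. The splitting of the defining sequence is automatic over $\Z_p$, so no analogue of the projectivity argument for $\Z_p$ as a pro-$p$ group is needed beyond the elementary fact that free modules are projective.
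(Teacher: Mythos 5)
Your proof is correct, and it follows exactly the route the paper intends: the paper states this Fact without proof (``straightforward''), and your argument is the direct linearization of its proof of Proposition~\ref{facthetabelian}, using the split exact sequence $0\to\kernel(\theta_L)\to L\to\image(\theta_L)\to 0$ and the abelianness of $\Zen_{\theta_L}(L)$ in one direction, and constructing $\theta_L$ from the basis in the other. Your reading of the converse (building the orientation with $\theta_L(v_1)=\lambda$, $\theta_L(v_i)=0$) is also how the Fact is actually applied in Step~2 of the proof of Theorem~A, so the interpretation is the intended one.
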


Combining Proprosition \ref{facthetabelian} and Fact \ref{facLthetabelian}, one obtains the following proposition.
\begin{prop}\label{propthetabelianGL}
A finitely generated uniform pro-$p$ group $G$ with orientation $\theta$ is $\theta$-abelian if,
and only if, the associated Lie algebra $\log(G)$ has an orientation $\theta_L$
such that $\log(G)$ is $\theta_L$-abelian. In particular, $\theta_L=\log(\theta)$ and $\theta=\exp(\theta_L)$.
\end{prop}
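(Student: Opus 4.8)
The plan is to transport a minimal generating set of $G$ through the homeomorphism $\log\colon G\to\log(G)$ of Remark~\ref{remarklocpwfetLie}(i) and to match the presentation characterising $\theta$-abelian groups in Proposition~\ref{facthetabelian} with the basis characterising $\theta_L$-abelian $\Z_p$-Lie algebras in Fact~\ref{facLthetabelian}. I use throughout that $\log\colon 1+p.\Z_p\to p.\Z_p$ (resp. $1+4.\Z_2\to 4.\Z_2$ when $p=2$) is an isomorphism of topological groups with inverse $\exp$, that in a uniform group $z\mapsto z^c$ is continuous in the exponent $c\in\Z_p$, and that any minimal generating set of the uniform group $G$ yields a $\Z_p$-basis of $\log(G)$ with matching coordinates. (As in Proposition~\ref{facthetabelian}, for $p=2$ one takes $\image(\theta)\subseteq 1+4.\Z_2$, which is exactly what makes $\log\circ\theta$ meaningful.)

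\emph{The forward implication.} Assume $(G,\theta)$ is $\theta$-abelian. By Proposition~\ref{facthetabelian}, $G$ has a presentation~\eqref{presentationthetabelian} with $\lambda=\theta(x_1)-1$, so $x_1x_ix_1^{-1}=x_i^{\theta(x_1)}$ and $[x_i,x_j]=1$ for $2\le i<j\le d$. Put $v_i=\log(x_i)$, a $\Z_p$-basis of $\log(G)$. Conjugation by $x_1^{p^n}$ raises elements of $\overline{\langle x_i\rangle}$ to the power $\theta(x_1)^{p^n}$, so $[x_1^{p^n},x_i^{p^n}]=x_i^{\,p^n(\theta(x_1)^{p^n}-1)}$ and therefore $(x_1,x_i)_n=x_i^{\,p^{-n}(\theta(x_1)^{p^n}-1)}$ in \eqref{deflie2}. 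Writing $\theta(x_1)=\exp(\mu)$ with $\mu=\log(\theta(x_1))$, the exponent $p^{-n}(\theta(x_1)^{p^n}-1)$ tends $p$-adically to $\mu$, whence $(v_1,v_i)=\mu.v_i$; and $[x_i,x_j]=1$ at once gives $(v_i,v_j)=0$ for $2\le i<j\le d$. By Fact~\ref{facLthetabelian}, $\log(G)$ is $\theta_L$-abelian for the orientation $\theta_L$ determined by $\theta_L(v_1)=\mu=\log(\theta(x_1))$ and $\theta_L(v_i)=0=\log(\theta(x_i))$ for $i\ge 2$.

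\emph{The converse, and the relation $\theta=\exp(\theta_L)$.} Assume $\log(G)$ is $\theta_L$-abelian. By Fact~\ref{facLthetabelian} pick a $\Z_p$-basis $v_1,\ldots,v_d$ of $\log(G)$ with $(v_1,v_i)=\mu.v_i$ and $(v_i,v_j)=0$ for $2\le i<j\le d$, where $\mu=\theta_L(v_1)$ (so $\mu\in 4.\Z_2$ if $p=2$), and set $x_i=\exp(v_i)$, a minimal generating set of $G$. To recover the group relations $x_1x_ix_1^{-1}=x_i^{\exp(\mu)}$ and $[x_i,x_j]=1$ from these Lie relations, I would invoke the $\adj$--$\exp$ correspondence for uniform pro-$p$ groups of \cite[\S4.5]{analytic}: conjugation by $x_1$ transports under $\log$ to $\exp(\adj v_1)$ on $\log(G)$, and since $\adj v_1$ carries $\log(G)$ into $p.\log(G)$ this series converges and sends $v_i$ to $\exp(\mu).v_i$, giving $x_1x_ix_1^{-1}=x_i^{\exp(\mu)}$, while $(v_i,v_j)=0$ forces $[x_i,x_j]=1$. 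Since $\exp(\mu)\in 1+4.\Z_2$ when $p=2$, Proposition~\ref{facthetabelian} applies and shows that $G$ is $\theta$-abelian for the orientation $\theta$ with $\theta(x_1)=\exp(\mu)$, $\theta(x_i)=1$ for $i\ge 2$. In either direction $\theta_L(v_i)=\log(\theta(x_i))$ on the basis with $v_i=\log(x_i)$; since any $g=x_1^{a_1}\cdots x_d^{a_d}$ has $\log(g)=\sum_i a_i.v_i$ and $\theta(g)=\theta(x_1)^{a_1}$, this extends to $\theta_L\circ\log=\log\circ\theta$, i.e. $\theta_L=\log(\theta)$ and $\theta=\exp(\theta_L)$.

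I expect the main obstacle to be this last passage in the converse: deducing the exact group relation $x_1x_ix_1^{-1}=x_i^{\exp(\mu)}$ from the Lie relation $(v_1,v_i)=\mu.v_i$. The forward direction is a bare $p$-adic limit read off \eqref{deflie2}, but reversing it genuinely uses the group/Lie dictionary in its harder direction --- reconstructing the group operation from the bracket --- which is precisely where one must appeal to the Baker--Campbell--Hausdorff / $\adj$--$\exp$ formalism of \cite[\S4.5]{analytic}. Alternatively one can route the converse through the forward implication: the model group $H=\langle y_1,\ldots,y_d\mid[y_1,y_i]=y_i^{\exp(\mu)-1},\,[y_i,y_j]=1\rangle$ of Proposition~\ref{facthetabelian} is uniform, and the forward implication identifies its associated oriented Lie algebra with $(\log(G),\theta_L)$; since $\log$ and $\exp$ give mutually inverse correspondences between uniform pro-$p$ groups and powerful $\Z_p$-Lie algebras, $G\cong H$, and so $G$ is $\theta$-abelian.
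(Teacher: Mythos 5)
Your argument is correct and follows essentially the same route as the paper: the forward direction is the explicit limit computation with \eqref{deflie1}--\eqref{deflie2} that the paper summarises as ``computations show,'' and your converse (especially the alternative via the model group of Proposition~\ref{facthetabelian}) is exactly the paper's appeal to the $\log$/$\exp$ equivalence of categories \cite[Theorem 9.10]{analytic} together with the commutative diagram giving $\theta_L=\log(\theta)$ and $\theta=\exp(\theta_L)$. No substantive difference in method, only that you spell out the computations the paper leaves implicit.
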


\begin{proof}
From the construction of the Lie algebra $\log(G)$ given by (\ref{deflie1}) and (\ref{deflie2}),
and from the presentation (\ref{presentationthetabelian}),
computations show that if $G$ is a uniform $\theta$-abelian pro-$p$ group
then $\log(G)$ has Lie brackets as in Fact \ref{facLthetabelian}.

The map $\log$ from the category of uniform pro-$p$ groups
to the category of powerful Lie algebras over $\Z_p$ is a functor of categories.
Moreover, the group structure can be reconstructed from the Lie algebra structure
by the well known Baker-Campbell-Hausdorff series.
Thus one has the functor $\exp$ from the category of powerful $\Z_p$-Lie algebras 
to the category of uniform pro-$p$ groups, which is the inverse of $\log$.
Namely $\log$ and $\exp$ are mutually inverse isomorphisms between the two categories \cite[Theorem 9.10]{analytic}.

In particular, one has the following commutative diagram:
\begin{displaymath}
 \xymatrix{G\ar@/^/[d]^\log\ar[rr]^\theta && \Z_p^\times\ar@/^/[d]^\log\\
L\ar@/^/[u]^\exp\ar[rr]_{\theta_L} && p.\Z_p\ar@/^/[u]^\exp}
\end{displaymath}
this yields the claim.
\end{proof}

\subsection{Proof of Theorem A}\label{subsec:proofA}

\begin{thmA}\label{ThmA}
A finitely generated uniform pro-$p$ group $G$ is locally powerful
if, and only if, $G$ there exists an orientation $\theta\colon G\rightarrow\Z_p^\times$
such that $(G,\theta)$ is $\theta$-abelian.
\end{thmA}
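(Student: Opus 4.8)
The plan is to prove both implications using the Lie-algebra dictionary already established. The ``if'' direction is the easy one: suppose $(G,\theta)$ is $\theta$-abelian. By Proposition~\ref{facthetabelian} the group $G$ has the presentation~(\ref{presentationthetabelian}), and every element can be written uniquely as $x_1^{a_1}x_2^{a_2}\cdots x_d^{a_d}$ with $a_i\in\Z_p$. Given a finitely generated closed subgroup $K\clsgp G$, I would first observe that $K\cap\ker(\theta)$ is abelian (being a subgroup of the abelian group $\ker(\theta)=\Zen_\theta(G)$), and $K/(K\cap\ker(\theta))$ embeds into $\image(\theta)\cong\Z_p$ or is trivial. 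If $K\subseteq\ker(\theta)$ then $K$ is abelian, hence powerful. Otherwise pick $y\in K$ with $\theta(y)$ a topological generator of $\theta(K)$; then $K=\langle y\rangle\ltimes(K\cap\ker\theta)$, and a direct commutator computation using $yhy^{-1}=h^{\theta(y)}$ with $\theta(y)\in 1+p\Z_p$ (resp. $1+4\Z_2$) shows $[K,K]\subseteq(K\cap\ker\theta)^{\theta(y)-1}\subseteq K^p$ (resp. $K^4$), so $K$ is powerful. Hence $G$ is locally powerful.

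For the ``only if'' direction, assume $G$ is finitely generated, uniform, and locally powerful. By Proposition~\ref{propthetabelianGL} it suffices to produce an orientation $\theta_L$ on $L=\log(G)$ making $L$ a $\theta_L$-abelian $\Z_p$-Lie algebra, i.e. (by Fact~\ref{facLthetabelian}) to find a basis $v_1,\dots,v_d$ of $L$ with $(v_1,v_i)=\lambda v_i$, $(v_i,v_j)=0$ for $1<i,j\le d$. The key input is Remark~\ref{remarklocpwfetLie}(ii): for every subset $\Omega\subseteq G$ the $\Z_p$-span of $\Omega$ inside $L$ is a Lie subalgebra, because it is $\log$ of the closed (hence uniform) subgroup generated by $\Omega$. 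In particular, for any two elements $x,y\in L$ coming from $G$, the rank-$2$ (or rank-$1$) submodule $\Z_p x+\Z_p y$ is closed under brackets, which forces $(x,y)\in\Z_p x+\Z_p y$. This is an extremely rigid condition on the bracket.

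The heart of the argument is therefore the following linear-algebra fact: if $L$ is a powerful $\Z_p$-Lie algebra of rank $d$ such that $(x,y)\in\Z_p x+\Z_p y$ for all $x,y\in L$, then $L$ is $\theta_L$-abelian for a suitable orientation. I would argue as follows. If $(LL)=0$ then $L$ is abelian and we take $\theta_L=0$; so assume $(LL)\ne 0$. Writing $(x,y)=\alpha(x,y)x+\beta(x,y)y$ and expanding bilinearity/antisymmetry, one sees that the derived algebra is at most rank $1$ (the bracket with any fixed $x$ lands in the line $\Z_p x$ modulo controlled terms, and a short computation shows all brackets are proportional). Pick $v\in L$ with $(v,w)\ne 0$ for some $w$; then $\adj v$ has image inside $\Z_p v + (\text{rank-}1\text{ derived part})$, and one checks $\ker(\adj v)$ has corank $1$, with $\adj v$ acting on a complement as a scalar $\lambda\in p\Z_p$. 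Choosing $v_1=v$ and completing $\ker(\adj v_1)$ to a basis by vectors $v_2,\dots,v_d$ on which brackets vanish (using that the rank-$\le 1$ derived algebra is spanned by $v_1$-direction) gives the desired basis; set $\theta_L(v_1)=\lambda$, $\theta_L(v_i)=0$. Then $L$ is $\theta_L$-abelian, and transporting back via $\exp$ gives the orientation $\theta$ on $G$ with $(G,\theta)$ $\theta$-abelian.

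The main obstacle I anticipate is precisely this last linear-algebra step: showing that the pointwise condition ``$(x,y)\in\spa_{\Z_p}\{x,y\}$ for all $x,y$'' globally forces the derived algebra to be of rank at most one and to sit inside a single coordinate direction. The subtlety is that the scalars $\alpha(x,y),\beta(x,y)$ vary with $x,y$, so one must carefully exploit bilinearity together with the powerfulness condition $(LL)\subseteq p.L$ (which keeps everything integral and prevents denominators) to pin down a uniform $\lambda$. A secondary technical point is the $p=2$ bookkeeping: one needs $\image(\theta)\subseteq 1+4\Z_2$, equivalently $\lambda\in 4\Z_2$, which should follow from $L$ powerful ($(LL)\subseteq 4L$) once $\lambda$ is identified as a structure constant of the bracket. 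Everything else — the splitting of $\theta$, the passage between $G$ and $\log(G)$ — is already packaged in the preceding propositions.
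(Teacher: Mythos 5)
Your overall frame is sound and close to the paper's: the ``if'' direction via the semidirect product structure of a $\theta$-abelian group is fine (and more detailed than the paper, which just invokes Proposition \ref{facthetabelian}), and for the ``only if'' direction the paper likewise passes to $L=\log(G)$, uses Remark \ref{remarklocpwfetLie}(ii) to get $(x,y)\in\spa_{\Z_p}\{x,y\}$, and returns to $G$ via Proposition \ref{propthetabelianGL}; the paper settles $d=3$ by hand (Jacobi identity plus a trace argument on $\adj$) and then inducts on $d$ using uniqueness of the orientation on non-abelian subgroups, whereas you aim at a one-shot structure lemma for all $d$. The problem is that your sketch of that lemma --- which you yourself identify as the heart of the matter --- rests on claims that are false for the very structure you are trying to reach. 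In a non-abelian $\theta_L$-abelian algebra of rank $d\geq 3$ with basis $v_1,\dots,v_d$ and $(v_1,v_i)=\lambda.v_i$, the derived algebra is $\lambda.\spa_{\Z_p}\{v_2,\dots,v_d\}$, of rank $d-1$, not rank $\leq 1$, and the brackets $\lambda.v_2$, $\lambda.v_3$ are not proportional; moreover $\kernel(\adj v_1)=\Z_p v_1$ has corank $d-1$, while for $v\in\kernel(\theta_L)$ the map $\adj v$ does have corank-one kernel but sends a complement into $\Z_p v$ rather than acting on it as a scalar. Your prescription ``choose $v_1=v$ with corank-one $\kernel(\adj v_1)$ and complete it to a basis by $v_2,\dots,v_d$'' is internally inconsistent (a corank-one submodule is completed by one vector) and conflates these two kinds of elements, so the construction of the basis and of $\theta_L$ does not go through as written.

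The lemma you want is nevertheless true and can be proved by pushing your bilinearity idea correctly: writing $(x,y)=\alpha(x,y).x+\beta(x,y).y$ and comparing $(x,y+z)$ with $(x,y)+(x,z)$ for $x,y,z$ linearly independent (so this needs $d\geq 3$; the case $d=2$ must be treated separately, e.g.\ via the classification $\langle x,y\mid [x,y]=y^{p^k}\rangle$ of $2$-generated uniform groups that the paper uses) shows that $\beta(x,y)$ depends only on $x$ and that $\alpha(x,\cdot)$ is linear; antisymmetry then yields a single linear functional $\theta_L$ with $(x,y)=\theta_L(x).y-\theta_L(y).x$ for all $x,y$, which is precisely the $\theta_L$-abelian form of Fact \ref{facLthetabelian}, with $\theta_L$ taking values in $p.\Z_p$ (resp.\ $4.\Z_2$) because $(LL)\subseteq p.L$ (resp.\ $4.L$). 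Until the incorrect intermediate claims are replaced by an argument of this kind (or by the paper's $d=3$ plus induction scheme), the ``only if'' direction has a genuine gap.
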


\begin{proof}
If $G$ is $\theta$-abelian,
then, by Proposition \ref{facthetabelian}, $G$ is locally powerful and torsion-free.

Conversely, let $G$ be a torsion-free locally powerful pro-$p$ group with $d(G)=d\geq2$,
Thus by Proposition \ref{proppresuniform}, $G$ has a presentation $G=\langle x_1,\ldots,x_d|R\rangle$
with relations as in (\ref{proppresuniform1}).
Let $H_{ij}\clsgp G$ be the closed subgroup generated by the elements $x_i,x_j$.
Since $H_{ij}$ is uniform as well, we have that
\[H_{ij}=\left\langle x_i,x_j\left|[x_i,x_j]=x_i^{\lambda_i(i,j)} x_j^{\lambda_j(i,j)},\lambda_i,\lambda_j\in p.\Z_p\right.\right\rangle,\]
so that $R=\{[x_i,x_j]=x_i^{\lambda_i(i,j)} x_j^{\lambda_j(i,j)}, 1\leq i<j\leq d\}$
is the set of relations.

Since an abelian pro-$p$ group is ${\mathbf 1}$-abelian, where ${\mathbf 1}$ is the trivial orientation,
we may assume that $G$ is not abelian, i.e., we may assume without loss of generality that $x_1$ and $x_2$ do not commute.

\medskip
\textit{Step 1:} 
First suppose that $d=2$.
It is well known that if $G$ is nonabelian,
then $G$ has a presentation $\langle x,y|[x,y]y^{-p^k}\rangle$
for some uniquely determined positive integer $k$ \cite[Chapter 4, Exercise 13]{analytic}.
Hence the claim follows from Fact \ref{facthetabelian}.

\medskip
\textit{Step 2:} Suppose $d=3$.
By the previously mentioned remark we may choose $x_1,x_2$ such that $[x_1,x_2]=x_2^\lambda$,
with $\lambda\in p.\Z_p$ (resp. $\lambda\in4.\Z_2$ if $p=2$).
Thus
\[G=\left\langle x_1,x_2,x_3\left| \left[x_1,x_2\right]=x_2^\lambda,\left[x_1,x_3\right]=x_1^{\lambda_1}x_3^{\lambda_2},
\left[x_2,x_3\right]=x_2^{\mu_1}x_3^{\mu_2}\right.\right\rangle,\]
with $\lambda_i,\mu_i\in p.\Z_p$ (resp. in $4.\Z_2$).
Let $H_{ij}$ be the subgroups as defined above, with $1\leq i<j\leq3$, and let $L=\log(G)$.
Clearly, $(x_i,x_j)_n\in H_{ij}$ for all $n$.
Hence $(x_i,x_j)\in\spa_{\Z_p}\{x_i,x_j\}$.
In particular, the Lie brackets in $L$ are such that
\[(x_1,x_2)=\alpha.x_2,\quad (x_2,x_3)=\beta_2.x_2+\beta_3.x_3,\quad (x_1,x_3)=\gamma_1.x_1+\gamma_3.x_3,\]
with $\alpha, \beta_i, \gamma_i\in p.\Z_p$ (resp. in $4.\Z_2$).

By the Jacobi identity, one has
\begin{eqnarray*}
 0 &=& \left((x_1,x_2),x_3\right)+\left((x_2,x_3),x_1\right)+\left((x_3,x_1),x_2\right)\\
   &=& (\alpha.x_2,x_3)+(\beta_2.x_2+\beta_3.x_3,x_1)-(\gamma_1.x_1+\gamma_3.x_3,x_2)\\
   &=& -\beta_3\gamma_1.x_1+\left(\alpha\beta_2-\alpha\beta_2-\alpha\gamma_1+\beta_2\gamma_3\right).x_2
        +\left(\alpha\beta_3-\beta_3\gamma_3+\beta_3\gamma_3\right).x_3,
\end{eqnarray*}
hence $\beta_3\gamma_1.x_1=0$, and thus $\beta_3=0$ or $\gamma_1=0$.
\begin{enumerate}
 \item[(1)] If $\beta_3=0$, then by definition $(x_2,x_3)\in\spa_{\Z_p}\{x_2\}$,
 i.e., $\spa_{\Z_p}\{x_2\}$ is an ideal of $L$.
Therefore we may choose without loss of generality $x_1$ and $x_3$ such that $(x_1,x_3)\in\spa_{\Z_p}\{x_3\}$,
and $(x_i,x_2)\in\spa_{\Z_p}\{x_2\}$ for $i=1,3$.
 \item[(2)] If $\gamma_1=0$, then by definition $(x_1,x_3)\in\spa_{\Z_p}\{x_3\}$,
i.e., $\spa_{\Z_p}\{x_2,x_3\}$ is an ideal of $L$.
Therefore we may choose without loss of generality $x_2$ and $x_3$ such that $(x_2,x_3)\in\spa_{\Z_p}\{x_2\}$,
and $(x_1,x_i)\in\spa_{\Z_p}\{x_i\}$ for $i=2,3$.
\end{enumerate}
Altogether the Lie brackets in $L$ are
\[(x_1,x_2)=\alpha'.x_2,\quad(x_2,x_3)=\beta'.x_2,\quad(x_1,x_3)=\gamma'.x_3,\]
with $\alpha',\beta',\gamma'\in p.\Z_p$ (resp. in $4.\Z_2$).
The matrix of $\adj(\gamma.x_3)$ with respect to the basis $\{x_1,x_2,x_3\}$ is given by
\[\adj(\gamma'.x_3)=\left(\begin{array}{ccc}
   0 & 0 & 0 \\ 0 & \beta'\gamma' & 0 \\ -\gamma'^2 & 0 & 0 \end{array}\right).\]
In particular, its trace is $\tra(\adj(\gamma'.x_3))=\beta'\gamma'$.
Since $\adj(\gamma'.x_3)=(\adj(x_1),\adj(x_3))$,
one has $\tra(\adj(\gamma'.x_3))=\beta'\gamma'=0$.
Therefore $\beta'=0$ or $\gamma'=0$.

\begin{enumerate}
 \item[(1)] If $\beta'=0$, let $v_1=x_1+x_2$ and $v_2=x_2+x_3$.
Then $(v_1,v_2)=\alpha'.x_2+\gamma'.x_3$.
By Remark \ref{remarklocpwfetLie}, one has that $(v_1,v_2)\in\spa_{\Z_p}\{v_1,v_2\}$. 
Thus $(v_1,v_2)$ is necessarily a multiple of $v_2$, i.e., $\alpha'=\gamma'$.
 \item[(2)] If $\gamma'=0$ and $\beta'\neq0$, let $v=x_1+x_2$. Then $(v,x_3)=\beta'.x_2$.
By Remark \ref{remarklocpwfetLie}, one has that $(v,x_3)\in\spa_{\Z_p}\{v,x_3\}$.
In particular, no multiple of $x_2$ lies in $\spa_{\Z_p}\{v,x_3\}$.
Therefore, this case is impossible.
 \item[(3)] If $\beta'=\gamma'=0$ then $\alpha'=0$ by (1). So $L$, and hence $G$, is abelian. But this case was excluded.
\end{enumerate}

This yields $\beta'=0$ and $\alpha'=\gamma'\neq0$, with $\alpha'\in p.\Z_p$ (resp. in $4.\Z_2$).
Therefore, by Fact \ref{facthetabelian} (ii), $L$ is $\theta_L$-abelian, with $\theta_L(x_1)=\alpha'$, $\theta_L(x_i)=0$
for $i=2,3$, and the claim follows from Proposition \ref{propthetabelianGL}.

\medskip

\textit{Step 3:} Finally, suppose that $G$ is locally powerful, torsion-free with $d(G)=n+1\geq4$,
and let $G$ be generated by $x_1,\ldots,x_{n+1}$.
Since $G$ is non-abelian we may assume without loss of generality that $x_1$ and $x_2$ do not commute.

Let $H\clsgp G$ be the subgroup generated by $x_1,\ldots,x_n$.
Thus by induction there is a unique (non-trivial) orientation $\theta\colon H\rightarrow\Z_p^\times$ such that $H$ is $\theta$-abelian.
In particular, we may assume that $[x_1,x_i]=x_i^\lambda$ and $[x_i,x_j]=1$ for all $2\leq i,j\leq n$,
where $\lambda=\theta(x_1)-1\in p.\Z_p\smallsetminus\{0\}$ (resp. in $4.\Z_2\smallsetminus\{0\}$ for $p=2$).

Furthermore, let $H_i\clsgp G$ be the subgroup generated by $x_1,x_i,x_{n+1}$, for $2\leq i\leq n$.
By induction, for each $i$ there exists an orientation $\theta_i\colon H_i\rightarrow\Z_p^\times$ such that $H_i$ is $\theta_i$-abelian.

Since $\theta_i(x_1)=\theta(x_1)=1+\lambda$ and $\theta_i(x_i)=\theta(x_i)=1$ for all $i$,
then necessarily $\theta_i(x_{n+1})=1$ for all $i$;
i.e., $[x_1,x_{n+1}]=x_{n+1}^\lambda$ and $[x_i,x_{n+1}]=1$ for all $i$.
Hence we may extend $\theta$ to $G$ such that $\theta(x_{n+1})=1$.
Thus $G$ is $\theta$-abelian.

This estabilishes the theorem.\end{proof}

\section{A Tits alternative for Bloch-Kato pro-$p$ groups}\label{sec:titsalt}

\subsection{Dimension of cohomology groups}
If $p=2$ then the cohomology ring of a Bloch-Kato group $G$ is a quotient of the symmetric algebra $S^\bullet(H^1(G))$.
On the other hand, if $p$ is odd then
the cohomology ring of $G$ is a quotient of the exterior algebra $\bigwedge_\bullet(H^1(G))$. 
Thus in this latter case if $G$ is finitely generated then
\begin{equation}\label{inequalityBloch-Kato}
 \dim_{\F_p}(H^r(G))\leq\binom{d(G)}{r}\quad\text{for all }r\geq0.
\end{equation}
In fact it is possible to prove a stronger result. 

\begin{prop}\label{propdimwBloch-Kato}
Let $p$ be odd, and let $G$ be a finitely generated Bloch-Kato pro-$p$ group.
Then
\begin{itemize}
 \item[(i)] $cd(G)\leq d(G)\leq\dim_{\F_p}\left(\lambda_2(G)/\lambda_3(G)\right)$;
 \item[(ii)] $r(G)\leq\binom{d(G)}{2}$.
\end{itemize}
\end{prop}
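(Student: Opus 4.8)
The plan is to obtain (ii) and the bound $\cohdim(G)\le d(G)$ at once from the inequality (\ref{inequalityBloch-Kato}), and then to reduce the remaining assertion $d(G)\le\dim_{\F_p}(\lambda_2(G)/\lambda_3(G))$ to (ii) through a computation inside a free pro-$p$ group. Indeed, since $p$ is odd the ring $H^\bullet(G)$ is a quotient of $\bigwedge_\bullet(H^1(G))$, so (\ref{inequalityBloch-Kato}) holds; taking $r=d(G)+1$ gives $H^k(G)=0$ for all $k>d(G)$, whence $\cohdim(G)\le d(G)$, and taking $r=2$ gives $r(G)=\dim_{\F_p}H^2(G)\le\binom{d(G)}{2}$, which is (ii). Note that (ii) is established here independently of (i), so it is legitimate to invoke it below.

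For the last inequality write $d=d(G)$ and fix a minimal presentation $G=F/R$ with $F$ free pro-$p$ of rank $d$ and $R\trianglelefteq F$, so that $R\subseteq\Phi(F)=\lambda_2(F)$ and $r(G)=\dim_{\F_p}(R/R^p[R,F])$ (see \S\ref{sec:prelim}). Because the lower $p$-central series is compatible with quotients, $\lambda_i(G)=\lambda_i(F)R/R$ for every $i$; combining this (and $\lambda_2(F)R=\lambda_2(F)$, $\lambda_3(F)R\subseteq\lambda_2(F)$) one gets
\[
\lambda_2(G)/\lambda_3(G)\;\cong\;\bigl(\lambda_2(F)/\lambda_3(F)\bigr)\big/\overline{R},\qquad \overline{R}:=R\lambda_3(F)/\lambda_3(F).
\]
Since $R\subseteq\lambda_2(F)$ we have $R^p\subseteq\lambda_2(F)^p\subseteq\lambda_3(F)$ and $[R,F]\subseteq[\lambda_2(F),F]\subseteq\lambda_3(F)$, so $R^p[R,F]\subseteq R\cap\lambda_3(F)$; hence $\overline{R}\cong R/(R\cap\lambda_3(F))$ is a quotient of $R/R^p[R,F]$, and therefore $\dim_{\F_p}\overline{R}\le r(G)$.

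Finally, for $F$ free pro-$p$ of rank $d$ and $p$ odd it is a standard fact that, for a basis $x_1,\dots,x_d$ of $F$, the classes of $x_1^p,\dots,x_d^p$ together with the classes of the commutators $[x_i,x_j]$, $1\le i<j\le d$, form an $\F_p$-basis of $\lambda_2(F)/\lambda_3(F)$ (these are precisely the degree-two generators of the associated graded restricted Lie algebra of $F$), so $\dim_{\F_p}(\lambda_2(F)/\lambda_3(F))=d+\binom{d}{2}$. Combining this with the previous paragraph and with (ii),
\[
\dim_{\F_p}\bigl(\lambda_2(G)/\lambda_3(G)\bigr)=\Bigl(d+\binom{d}{2}\Bigr)-\dim_{\F_p}\overline{R}\;\ge\;d+\binom{d}{2}-r(G)\;\ge\;d,
\]
which finishes the argument. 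The only point that needs real care — everything else being bookkeeping — is this free pro-$p$ computation, in particular the $\F_p$-linear independence, for $p$ odd, of the $p$-th power classes $x_i^p$ from the commutator classes $[x_i,x_j]$ in $\lambda_2(F)/\lambda_3(F)$; one may alternatively package the whole estimate via the natural exact sequence $(\lambda_2(G)/\lambda_3(G))^{\ast}\hookrightarrow H^1(G)\oplus\textstyle\bigwedge^2 H^1(G)\xrightarrow{\beta+\cup}H^2(G)$.
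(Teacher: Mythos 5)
Your proof is correct, but for the heart of the statement — the inequality $d(G)\le\dim_{\F_p}(\lambda_2(G)/\lambda_3(G))$ — you take a genuinely different route from the paper. The first half is identical: both you and the paper get $\cohdim(G)\le d(G)$ and $r(G)\le\binom{d(G)}{2}$ directly from the exterior-algebra bound (\ref{inequalityBloch-Kato}). For the remaining inequality the paper stays cohomological: quadraticity of $H^\bullet(G)$ forces $\inf^2_{G/\Phi(G)}$ to be surjective, so the five-term sequence for $1\to\Phi(G)\to G\to G/\Phi(G)\to1$ becomes the short exact sequence $0\to(\lambda_2(G)/\lambda_3(G))^*\to H^1(G)\oplus\bigl(H^1(G)\wedge H^1(G)\bigr)\to H^2(G)\to0$, and dimension counting with $r(G)\le\binom{d}{2}$ finishes; in particular the paper obtains the exact identity $d+\binom{d}{2}=\dim_{\F_p}(\lambda_2(G)/\lambda_3(G))+r(G)$. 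You instead argue through a minimal presentation $G=F/R$: the identification $\lambda_2(G)/\lambda_3(G)\cong(\lambda_2(F)/\lambda_3(F))/\overline{R}$, the containment $R^p[R,F]\subseteq R\cap\lambda_3(F)$ giving $\dim_{\F_p}\overline{R}\le r(G)$ (note that for this you only need that $F$ acts trivially on $R/R^p[R,F]$, so the images of $r(G)$ normal generators span it — you do not even need the full equality $r(G)=\dim_{\F_p}(R/R^p[R,F])$), and the standard count $\dim_{\F_p}(\lambda_2(F)/\lambda_3(F))=d+\binom{d}{2}$ for $F$ free and $p$ odd. All these steps are sound; the free-group count you flag as the delicate point is indeed correct (be a little careful with the phrase ``associated graded restricted Lie algebra'': for the Zassenhaus filtration the $p$-th powers sit in degree $p$, not $2$, but for the series $\lambda_i$ the classes of $x_i^p$ and $[x_i,x_j]$ do form a basis of $\lambda_2(F)/\lambda_3(F)$; alternatively it follows from the paper's own five-term argument applied to $F$, where $H^2(F)=0$, or from your dual sequence $(\lambda_2/\lambda_3)^*\hookrightarrow H^1\oplus\bigwedge^2H^1$). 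The trade-off: your argument isolates the Bloch-Kato input entirely in the single bound $r(G)\le\binom{d}{2}$ and keeps the rest purely group-theoretic, which makes transparent why the argument breaks for $p=2$ (there one only has $r(G)\le\binom{d+1}{2}$); the paper's argument is shorter given the spectral-sequence machinery and yields the sharper equality above, only the inequality of which you recover.
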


\begin{proof}
The inequalities $cd(G)\leq d(G)$ and $r(G)\leq\binom{d(G)}{2}$ are immediate consequences of (\ref{inequalityBloch-Kato}).

The inflation map induces an isomorphism $\rho=\inf_{G/\Phi(G)}^1$ in degree 1,
so that the commutativity of the diagram
\begin{displaymath}
 \xymatrix{H^1(G/\Phi(G))\otimes H^1(G/\Phi(G))\ar[d]^{\rho\otimes\rho}_\wr\ar@{->}[rr]^-\cup && H^2(G/\Phi(G))\ar@{->}[d]^{\inf_{G/\Phi(G)}^2}\\
H^1(G)\otimes H^1(G)\ar@{->>}[rr]^\cup && H^{2}(G)}
\end{displaymath}
implies that $\inf_{G/\Phi(G)}^2$ is surjective.

Consider the five terms exact sequence arising from the quotient $G/\Phi(G)$.
Since $\rho$ is an isomorphism, it reduces to
\begin{equation}\label{5tes}
\begin{CD}
0 @>>> H^1(\Phi(G))^G @>>> H^2(G/\Phi(G)) @>{\inf_{G/\Phi(G)}^2}>> H^2(G) @>>> 0.
\end{CD} 
\end{equation}
Moreover, the group $H^1(\Phi(G))^G$ is isomorphic to the quotient $(\lambda_2(G)/\lambda_3(G))^*$ as discrete group,
where $\_^*$ denotes the Pontryagin dual.

Since $G/\Phi(G)$ is a elementary abelian $p$-group, the second cohomology group is
\begin{eqnarray*}
H^2(G/\Phi(G))&=&\beta\left(H^1(G/\Phi(G))\right)\oplus\left(H^1(G/\Phi(G))\cup H^1(G/\Phi(G))\right)\\
&\cong&H^1(G/\Phi(G))\oplus\left(H^1(G/\Phi(G))\wedge H^1(G/\Phi(G))\right)\\
&\cong&H^1(G)\oplus\left(H^1(G)\wedge H^1(G)\right).
\end{eqnarray*}

From the sequence (\ref{5tes}) one obtains 
\begin{equation}\label{dimensionBlochKato}
0\longrightarrow \left(\lambda_2(G)/\lambda_3(G)\right)^*\longrightarrow H^1(G)\oplus\left(H^1(G)\wedge H^1(G)\right)
\longrightarrow H^2(G)\longrightarrow 0.
\end{equation}
Therefore
\begin{eqnarray*}
 d(G)+\binom{d(G)}{2} &=& \dim_{\F_p}\left(H^1(G)\oplus \left(H^1(G)\wedge H^1(G)\right)\right)\\
 &=&\dim_{\F_p}\left(\frac{\lambda_2(G)}{\lambda_3(G)}\right)+\dim_{\F_p}\left(H^2(G)\right)\quad\text{by (\ref{dimensionBlochKato})}\\
 &=&\dim_{\F_p}\left(\frac{\lambda_2(G)}{\lambda_3(G)}\right)+r(G)\\
 &\leq&\dim_{\F_p}\left(\frac{\lambda_2(G)}{\lambda_3(G)}\right)+\binom{d(G)}{2},
\end{eqnarray*}
namely $d(G)\leq\dim_{\F_p}(\lambda_2(G)/\lambda_3(G))$.
\end{proof}

\begin{rem}
There is no analogue of Proposition \ref{propdimwBloch-Kato} in case that $p=2$. For a Bloch-Kato pro-2 group $G$ the
exact sequence (\ref{5tes}) specifies to
\[0\longrightarrow \left(\lambda_2(G)/\lambda_3(G)\right)^*\longrightarrow H^2\left((\Z/2\Z)^d\right)\longrightarrow H^2(G)\longrightarrow 0,\]
and $\dim(H^2((\Z/2\Z)^d))=\binom{d+1}{2}$, while $\dim(H^2(G))\leq\binom{d+1}{2}$.
\end{rem}

\begin{prop}\label{cd(G)=d(G)}
Let $p$ be odd, and let $G$ be a Bloch-Kato pro-$p$ group
such that $cd(G)=d(G)$.
Then the cohomology ring $H^\bullet(G)$ is isomorphic to the $\F_p$-exterior algebra $\bigwedge_\bullet(H^1(G))$. 
\end{prop}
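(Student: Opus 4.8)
The plan is to exhibit the tautological graded-algebra surjection of an exterior algebra onto $H^\bullet(G)$ and then to show that it is injective in every degree, using the hypothesis $cd(G)=d(G)$ together with the non-degeneracy of the wedge pairing on the exterior algebra of a finite-dimensional space.

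I would first fix notation: put $V=H^1(G)$ and $d=d(G)=\dim_{\F_p}V$, which I take to be finite (as is tacitly intended in the statement). Since $p$ is odd, graded-commutativity of the cup product gives $\chi\cup\chi=-\chi\cup\chi$, hence $\chi\cup\chi=0$ for every $\chi\in H^1(G)$; by the universal property of the exterior algebra there is therefore a homomorphism of graded $\F_p$-algebras $\psi\colon\bigwedge_\bullet(V)\to H^\bullet(G)$ which is the identity in degree $1$. Because $G$ is Bloch-Kato, $H^\bullet(G)$ is quadratic, in particular generated as an $\F_p$-algebra in degree $1$, and so $\psi$ is surjective in every degree.

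Next I would analyse the top degree. As $\dim_{\F_p}V=d$, the space $\bigwedge_d(V)$ is one-dimensional and $\bigwedge_r(V)=0$ for $r>d$; the vanishing in degrees above $d$ matches $H^r(G)=0$ for $r>d$, which holds since $cd(G)=d$. The same hypothesis forces $H^d(G)\neq0$, so $\psi_d$, being a surjection from a one-dimensional space onto a nonzero space, is an isomorphism; hence $\ker\psi$ vanishes in degree $d$. Now $I:=\ker\psi$ is a graded ideal of $\bigwedge_\bullet(V)$, and we have just seen $I_d=0$ (and $I_r=0$ trivially for $r>d$). If $0\neq\omega\in I_r$ for some $r<d$, then, $I$ being an ideal, $\omega\wedge\eta\in I_d=0$ for every $\eta\in\bigwedge_{d-r}(V)$; but the multiplication map $\bigwedge_r(V)\times\bigwedge_{d-r}(V)\to\bigwedge_d(V)\cong\F_p$ is non-degenerate, so $\omega=0$, a contradiction. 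Therefore $I=0$, and $\psi$ is the desired isomorphism of graded $\F_p$-algebras $\bigwedge_\bullet(H^1(G))\xrightarrow{\ \sim\ }H^\bullet(G)$.

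The argument is short and I do not expect a genuine obstacle; the only places needing a little care are the well-definedness and surjectivity of $\psi$ — which use that $p$ is odd (for $\chi\cup\chi=0$) and that $H^\bullet(G)$ is quadratic (so that it is generated in degree $1$) — and the elementary linear-algebra fact that the wedge pairing $\bigwedge_r(V)\times\bigwedge_{d-r}(V)\to\bigwedge_d(V)$ on a $d$-dimensional space is non-degenerate. Should one wish to avoid the ideal manipulation, the conclusion also follows by combining the surjectivity of $\psi$ with the dimension count in Proposition \ref{propdimwBloch-Kato}, but the route via the perfect pairing is cleaner and invokes only $cd(G)=d(G)$.
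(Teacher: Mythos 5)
Your proof is correct. It differs from the paper's argument in one genuine respect: the paper argues by contradiction from a degree-$2$ relation, i.e.\ it uses the quadraticity of $H^\bullet(G)$ a second time to say that a non-trivial kernel of the surjection $\bigwedge_\bullet(H^1(G))\twoheadrightarrow H^\bullet(G)$ must already contain a non-zero element $\chi_1\cup\chi_2-\sum_{(i,j)\neq(1,2)}a_{ij}\,\chi_i\cup\chi_j$ in degree $2$, and then cups this explicit relation with $\chi_3\cup\cdots\cup\chi_d$ to force $H^d(G)=0$, contradicting $cd(G)=d$. You instead observe that the kernel is a graded ideal vanishing in top degree (since $H^d(G)\neq 0$ and $\bigwedge_d(H^1(G))$ is one-dimensional) and kill it in every degree at once via the non-degeneracy of the pairing $\bigwedge_r(V)\times\bigwedge_{d-r}(V)\to\bigwedge_d(V)$. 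The two mechanisms are the same in spirit (a non-zero kernel element would annihilate the top class), but your version uses quadraticity only for surjectivity, i.e.\ only that $H^\bullet(G)$ is generated in degree one; so it applies verbatim to the ``almost Bloch-Kato'' setting the paper mentions in its remark after Theorem~\ref{thm3equivalenceBloch-KatoHP}, whereas the paper's proof as written needs relations to be generated in degree two. The paper's route, on the other hand, is completely explicit in terms of the basis $\chi_1,\ldots,\chi_d$ and avoids invoking the perfect-pairing fact. Both arguments tacitly assume $d(G)<\infty$ and use that $cd(G)=d$ forces $H^d(G,\F_p)\neq 0$ for a pro-$p$ group, as you correctly note.
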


\begin{proof}
Let $H^1(G)$ be freely generated by $\chi_1,\ldots,\chi_d$ as $\F_p$ vector space,
and suppose for contradiction that $H^\bullet(G)$ is a non-trivial quotient of $\bigwedge_\bullet(H^1(G))$.
Since $H^\bullet(G)$ is quadratic, there is a non-trivial relation in $H^1(G)\wedge H^1(G)$.
Thus we may assume without loss of generality that
\[\chi_1\cup\chi_2=\sum_{(i,j)\neq(1,2)}a_{ij}.\chi_i\cup\chi_j,\]
with $i<j$ and $a_{ij}\in \F_p$.
This implies that 
\[\chi_1\cup\chi_2\cup\cdots\cup\chi_d=\sum_{(i,j)\neq(1,2)}a_{ij}.\chi_i\cup\chi_j\cup\chi_3\cup\cdots\cup\chi_d=0,\]
namely $H^d(G)=\spa_{\F_p}\{\chi_1\cup\cdots\cup\chi_d\}=0$, a contradiction.
This yields the claim.
\end{proof}

\subsection{Powerful groups and the cup product}
The following theorem is due to P. Symonds and Th. Weigel:

\begin{thm}[\cite{symondsthomas}, Theorem 5.1.6]\label{thmsymondsthomas}
Let $G$ be a finitely generated pro-$p$ group.
Then the map
\[\Lambda_2(\cup)\colon H^1(G)\wedge H^1(G)\longrightarrow H^2(G)\]
induced by the cup product is injective if, and only if, $G$ is powerful.
\end{thm}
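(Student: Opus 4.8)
The plan is to read off $\Lambda_2(\cup)$ from the five--term exact sequence (\ref{5tes}) attached to the Frattini quotient $G/\Phi(G)$, together with the identification $H^1(\Phi(G))^G\cong(\lambda_2(G)/\lambda_3(G))^*$ recalled just below it. Put $W=G/\Phi(G)$ and $V=H^1(G)$, and recall $\lambda_2(G)=\Phi(G)$. Since the inflation $\rho\colon H^1(G/\Phi(G))\to H^1(G)$ is an isomorphism and inflation is multiplicative, $\Lambda_2(\cup)$ is the composite $V\wedge V\xrightarrow{\ \sim\ }\bigwedge^2 H^1(G/\Phi(G))\xrightarrow{\ \cup\ }H^2(G/\Phi(G))\xrightarrow{\ \inf^2\ }H^2(G)$, where (for $p$ odd) the middle arrow is an isomorphism onto the exterior summand of $H^2(G/\Phi(G))\cong(V\wedge V)\oplus\beta(V)$, the decomposition already used in the proof of Proposition \ref{propdimwBloch-Kato}. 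By (\ref{5tes}), $\ker(\inf^2)=\image(d_2)$ with $d_2\colon(\lambda_2(G)/\lambda_3(G))^*\to H^2(G/\Phi(G))$ the injective transgression, so $\Lambda_2(\cup)$ is injective if and only if the exterior summand meets $\image(d_2)$ trivially, equivalently (as $d_2$ is injective) if and only if the composite $\pi_\beta\circ d_2$ with the projection $\pi_\beta\colon H^2(G/\Phi(G))\to\beta(V)$ onto the Bockstein summand is injective.

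The heart of the argument is to identify $\pi_\beta\circ d_2$. Since $\lambda_3(G)=\Phi(G)^p[\Phi(G),G]$ is exactly the term that makes $\Phi(G)/\lambda_3(G)$ central in $G/\lambda_3(G)$, the map $d_2$ is the transgression of the central extension $1\to\Phi(G)/\lambda_3(G)\to G/\lambda_3(G)\to W\to 1$. For an elementary abelian $W$ one has $H^2(W,\F_p)\cong\bigwedge^2 V\oplus\beta(V)$, the two summands recording respectively the commutator form and the ``$p$th--power character'' $q\mapsto\tilde q^{\,p}$ of a central $\F_p$--extension of $W$ (for $p$ odd the latter is additive). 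Transgressing a character $\chi$ of $\Phi(G)/\lambda_3(G)$ amounts to pushing this extension out along $\chi$, so the Bockstein component of $d_2(\chi)$ is $\chi$ composed with the well--defined ($p$ odd) homomorphism $P\colon W\to\Phi(G)/\lambda_3(G)$, $x\Phi(G)\mapsto x^p\lambda_3(G)$; under the identification $\beta(V)\cong W^*$ this says $\pi_\beta\circ d_2=P^*$. Hence $\Lambda_2(\cup)$ is injective if and only if $P$ is surjective, i.e. if and only if $G^p\lambda_3(G)=\Phi(G)$.

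It remains to match this with powerfulness. For $p$ odd, $G$ is powerful iff $[G,G]\subseteq G^p$ iff $\Phi(G)=G^p$; and by the standard structure theory of powerful pro-$p$ groups in \cite{analytic} (powerfulness is detected modulo $\lambda_3(G)$), the condition $\Phi(G)=G^p$ is equivalent to $\Phi(G)=G^p\lambda_3(G)$. Equivalently, since $\Phi(G)=G^p[G,G]$, surjectivity of $P$ is the same as the commutator pairing $\bigwedge^2 W\to\Phi(G)/\lambda_3(G)$ landing inside $\image(P)$, i.e. the usual ``commutators are $p$th powers mod $\lambda_3$'' criterion. For $p=2$ the same scheme runs with $H^2(W,\F_2)\cong S^2(V)$ in place of $\bigwedge^2 V\oplus\beta(V)$ — so that the identity $\chi\cup\chi=\beta\chi$ is absorbed into the symmetric part — with $\Phi(G)=G^2$ and the $4$th--power map replacing $P$ to match the definition of powerfulness at $2$.

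The step I expect to be the genuine obstacle is the identification in the second paragraph: making precise that the $\beta(V)$--component of the transgression $d_2$ is dual to the $p$th--power map $W\to\Phi(G)/\lambda_3(G)$. This is where the real interaction between the cohomological ring structure and the group-theoretic notion of powerfulness takes place; by contrast the exact sequence (\ref{5tes}) is already at hand, and the final translation to ``$G$ is powerful'' is routine structure theory from \cite{analytic}.
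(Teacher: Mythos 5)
The paper itself offers no proof of this statement: it is quoted directly from \cite{symondsthomas}, Theorem 5.1.6, so there is no internal argument to compare yours against; judged on its own, your reconstruction is essentially correct for $p$ odd. The five-term sequence does give that the kernel of inflation $H^2(G/\Phi(G))\to H^2(G)$ is the image of the injective transgression $d_2$ defined on $(\lambda_2(G)/\lambda_3(G))^*$; for odd $p$ the classification of central extensions of $W=G/\Phi(G)$ by $\F_p$ by the pair (commutator form, $p$-power map) matches the decomposition $H^2(W)\cong\bigwedge^2 H^1(W)\oplus\beta(H^1(W))$; the transgression of $\chi$ is, up to sign, the pushout along $\chi$ of the class of $1\to\Phi(G)/\lambda_3(G)\to G/\lambda_3(G)\to W\to1$, so its Bockstein component is $\pm\,\chi\circ P$ as you claim; and the final reduction ($\Phi(G)=G^p\lambda_3(G)$ iff $\Phi(G)=G^p$) is correct, though you should prove it rather than gesture at \cite{analytic} --- pass to $G/G^p$, note it has exponent $p$ so that the hypothesis reads $\lambda_2=\lambda_3$ there, and conclude $\lambda_2=1$ since the lower $p$-series of a finitely generated pro-$p$ group intersects in the identity.

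The genuine gap is your $p=2$ paragraph. At $p=2$ the map is not even induced on $H^1\wedge H^1$ in general, since $\chi\cup\chi=\beta\chi\neq0$; the commutator/power data of a central $\F_2$-extension of $W$ is a single quadratic form, not a direct sum of two components; and, more fundamentally, the mod-$2$ five-term sequence for $G/\Phi(G)$ only sees squares modulo $\lambda_3(G)$, whereas powerfulness at $2$ is the condition $[G,G]\subseteq G^4$ --- no ``fourth-power map'' can be extracted from this diagram, so the scheme does not ``run the same way''. Indeed, with the natural reading of $\Lambda_2(\cup)$ the literal statement fails at $p=2$: for $G=Q_8$ the wedge $H^1\wedge H^1$ is one-dimensional and $x\cup y=x^2+y^2\neq0$ in $H^2(Q_8,\F_2)$, so the map is injective, yet $Q_8$ is not powerful since $[Q_8,Q_8]\not\subseteq Q_8^4=1$. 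The correct criterion at $p=2$ in \cite{symondsthomas} works with $\Z/4$-coefficients, which is precisely why the paper later calls the pro-$2$ version of this theorem ``more involuted'' and states Theorem~B only for odd $p$. So either restrict your proof to odd $p$ (which is all the paper uses) or rework the $2$-adic case with $\Z/4$-coefficients; as written, the last paragraph asserts something false.
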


Let $G$ be a pro-$p$ group, and let $H$ be a closed subgroup of $G$.
Then we call $H$ {\it properly embedded} in $G$, if the canonical map
$H/\Phi(H)\to G/\Phi(G)$ is injective. The following fact is a direct consequence
of Pontryagin duality.

\begin{fact}
\label{fact:proper1}
Let $G$ be a pro-$p$ group, and let $H$ be a closed subgroup of $G$.
Then the following are equivalent.
\begin{itemize}
\item[(i)] $H$ is properly embedded in $G$.
\item[(ii)] $\res^1_{G,H}\colon H^1(G,\F_p)\to H^1(H,\F_p)$ is surjective.
\end{itemize}
\end{fact}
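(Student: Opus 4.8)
The plan is to unwind both conditions through Pontryagin duality and identify them with the same linear-algebraic statement. First I would recall that $H^1(G,\F_p) = \Hom(G,\F_p) = \Hom(G/\Phi(G),\F_p) = (G/\Phi(G))^*$, the $\F_p$-dual of the Frattini quotient, and likewise $H^1(H,\F_p) = (H/\Phi(H))^*$. Under these identifications, the restriction map $\res^1_{G,H}$ is precisely the $\F_p$-linear dual of the canonical map $\iota\colon H/\Phi(H)\to G/\Phi(G)$ induced by inclusion. (This is immediate: restricting a homomorphism $G\to\F_p$ to $H$ and then factoring through $H/\Phi(H)$ is the same as precomposing the dual functional with $\iota$.)

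The next step is the elementary fact from linear algebra over a field that for a linear map $\iota$ of finite-dimensional vector spaces, $\iota$ is injective if and only if its dual $\iota^*$ is surjective. Since $G$ is not assumed finitely generated here, strictly speaking I would phrase this using Pontryagin duality rather than ordinary vector-space duality: the Frattini quotients are (possibly infinite) elementary abelian pro-$p$ groups, $H^1(\,\cdot\,,\F_p)$ is their Pontryagin dual (a discrete $\F_p$-vector space), and Pontryagin duality is an exact contravariant equivalence between the category of pro-$p$ elementary abelian groups and discrete $\F_p$-vector spaces. Exactness turns the injection $\iota$ into a surjection $\iota^* = \res^1_{G,H}$ and conversely, a surjection $\res^1_{G,H}$ back into an injection $\iota$ after applying duality again (using that the double dual recovers $\iota$). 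This gives the equivalence of (i) and (ii).

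There is no real obstacle; the only point requiring a word of care is the non-finitely-generated case, where one must invoke Pontryagin duality proper (continuous duals of profinite abelian groups) rather than naive linear duality, so that the exactness and double-duality statements are legitimate. I would simply cite the standard duality between pro-$p$ abelian groups and discrete torsion abelian groups (e.g.\ \cite[Ch.~I]{nsw}) and conclude. The whole argument is a two-line diagram chase once the identification $\res^1_{G,H} = \iota^*$ is in place.
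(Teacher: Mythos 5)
Your proof is correct and is essentially the paper's own argument: the paper gives no written proof beyond remarking that the fact is ``a direct consequence of Pontryagin duality,'' and your identification of $\res^1_{G,H}$ with the Pontryagin dual of the canonical map $H/\Phi(H)\to G/\Phi(G)$, combined with the exactness and double-duality properties of Pontryagin duality, is exactly that intended argument.
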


\begin{thm}\label{thmB}
\label{thm3equivalenceBloch-KatoHP}
Let $p$ be an odd prime, and let $G$ be a Bloch-Kato pro-$p$ group. 
Then the following are equivalent:
\begin{itemize}
\item[(i)] $G$ does not  contain non-abelian closed free pro-$p$ subgroups.
\item[(ii)] $G$ is locally powerful.
\item[(iii)] there exists an orientation $\theta\colon G\rightarrow \Z_p^\times$ such that $(G,\theta)$ is $\theta$-abelian.
In particular, $G$ is metabelian.
\end{itemize}
Moreover, if $G$ is finitely generated, then \textup{(i)}, \textup{(ii)}, and \textup{(iii)} are equivalent to:
\begin{itemize}
\item[(iv)] $G$ is $p$-adic analytic.
\end{itemize}
\end{thm}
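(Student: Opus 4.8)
The plan is to prove the cycle of implications (iii)$\Rightarrow$(ii)$\Rightarrow$(i)$\Rightarrow$(iii), and then separately add (iv) under the finite-generation hypothesis. The implication (iii)$\Rightarrow$(ii) is already contained in Theorem~A together with Proposition~\ref{facthetabelian}: a $\theta$-abelian group has the presentation \eqref{presentationthetabelian}, every finitely generated subgroup is again of this form (one restricts $\theta$), hence powerful; the ``metabelian'' assertion in (iii) was observed in the introduction. For (ii)$\Rightarrow$(i), suppose $G$ contains a non-abelian closed free pro-$p$ subgroup $F$. A free pro-$p$ group of rank $\geq 2$ contains a $2$-generated free pro-$p$ subgroup, which is not powerful (for instance, its $\F_p$-cohomology ring is the free quadratic algebra on two generators, so $\Lambda_2(\cup)$ is injective but $H^2 \neq 0$, contradicting powerfulness via Theorem~\ref{thmsymondsthomas}, or more elementarily $[F,F]$ is not contained in $F^p$). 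This $2$-generated free subgroup is a finitely generated closed subgroup of $G$ that is not powerful, contradicting (ii).

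The heart of the matter is (i)$\Rightarrow$(iii), and this is where I expect the main obstacle. The strategy is to reduce to the finitely generated case and invoke Theorem~A. First I would show that $G$ is torsion-free: since $p$ is odd, this is \cite[Proposition 2.3]{bk}. Next, the key reduction: if $G$ satisfies (i) then \emph{every} finitely generated closed subgroup $K$ of $G$ is locally powerful. Indeed, $K$ is a finitely generated Bloch-Kato pro-$p$ group (being a closed subgroup of a Bloch-Kato group), it is torsion-free, and it contains no non-abelian free pro-$p$ subgroup; so it suffices to prove that a \emph{finitely generated} Bloch-Kato pro-$p$ group with no non-abelian closed free pro-$p$ subgroup is powerful. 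For this I would argue that such a $K$ has $\cohdim(K) = d(K)$: if not, by Proposition~\ref{propdimwBloch-Kato} we have $\cohdim(K) < d(K)$, and the strategy is to produce a non-abelian free pro-$p$ subgroup. Concretely, pick a minimal generating set; if the quadratic relations in $H^1(K)\wedge H^1(K)$ do not force the whole exterior algebra, then one can find two generators $\chi_1,\chi_2$ whose cup product, together with the relations, still leaves ``room'', and the subgroup generated by lifts of the dual elements should fail to be powerful — iterating, one builds a $2$-generated non-powerful (hence, by Theorem~A applied at each finite stage, not locally powerful, but it is finitely generated) subgroup, which by the Tits-alternative-flavoured analysis must contain a free pro-$p$ group of rank $2$. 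Once $\cohdim(K) = d(K)$, Proposition~\ref{cd(G)=d(G)} gives $H^\bullet(K) \cong \bigwedge_\bullet(H^1(K))$, so $r(K) = \binom{d(K)}{2}$, the map $\Lambda_2(\cup)$ is an isomorphism, hence injective, and Theorem~\ref{thmsymondsthomas} yields that $K$ is powerful. Then, $K$ being finitely generated, torsion-free, and locally powerful, Theorem~A provides an orientation $\theta_K$ with $(K,\theta_K)$ $\theta_K$-abelian.

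Finally I would patch these local orientations into a global one. By the Fact following the definition of the $\theta$-center, if $K$ admits two distinct orientations making it $\theta$-abelian then $K \leq \Z_p^\times$, i.e.\ $K$ is procyclic; so outside the (abelian, hence harmless) procyclic case the orientation $\theta_K$ is \emph{unique}. Compatibility of these unique orientations under inclusion $K \leq K'$ then follows by restriction, so they glue to a single $\theta\colon G \to \Z_p^\times$; using that $G = \varprojlim$ (or: $\bigcup$ in the topological sense) of its finitely generated closed subgroups, and that $\Zen_\theta(G) = \ker(\theta)$ can be checked on such subgroups, one concludes $(G,\theta)$ is $\theta$-abelian. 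The main obstacle I anticipate is the step producing a non-abelian free pro-$p$ subgroup from the failure of $\cohdim(K) = d(K)$ — that is, showing directly that a finitely generated torsion-free Bloch-Kato pro-$p$ group which is not powerful contains a free pro-$p$ group of rank $2$; this is the genuine ``Tits alternative'' content and likely requires a careful induction on $d(K)$ mirroring the three-step argument in the proof of Theorem~A, using Proposition~\ref{propdimwBloch-Kato}(i) to locate the obstruction to powerfulness. For the last equivalence, under finite generation a torsion-free powerful pro-$p$ group is uniform, hence $p$-adic analytic by \cite[Theorem~4.5 and \S8]{analytic}, giving (ii)$\Rightarrow$(iv); and conversely a $p$-adic analytic pro-$p$ group has an open uniform subgroup, so it has finite rank, hence contains no free pro-$p$ group of infinite rank, and (being Bloch-Kato and torsion-free) no non-abelian free pro-$p$ subgroup at all by the already-established equivalence applied to that open subgroup — so (iv)$\Rightarrow$(i).
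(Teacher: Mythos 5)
Your implications (iii)$\Rightarrow$(ii)$\Rightarrow$(i) and the treatment of (iv) are fine and essentially match the paper, as does the idea of gluing the (unique, by the Fact on double orientations) orientations of finitely generated subgroups in the non-finitely-generated case. But the heart of the theorem, (i)$\Rightarrow$(ii)/(iii), is exactly the step you leave open, and your sketch for it has a genuine gap. You propose to show $\cohdim(K)=d(K)$ for every finitely generated closed $K$ by arguing that otherwise ``one can find two generators whose cup product leaves room'' and that the resulting $2$-generated non-powerful subgroup ``by the Tits-alternative-flavoured analysis must contain a free pro-$p$ group of rank $2$''. That last clause is precisely the assertion to be proved (for $2$-generated Bloch-Kato groups), so as written the argument is circular; and the suggested induction ``mirroring the three-step argument in the proof of Theorem~A'' cannot supply it, since the proof of Theorem~A is a Lie-algebra computation about groups that are already locally powerful and contains no mechanism for exhibiting free subgroups. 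The detour through $\cohdim(K)=d(K)$ and Proposition~\ref{cd(G)=d(G)} is also unnecessary.

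The missing idea, which is how the paper argues, is to use Theorem~\ref{thmsymondsthomas} in the opposite direction: if some finitely generated $K\clsgp G$ is \emph{not} powerful, then $\Lambda_2(\cup)\colon H^1(K)\wedge H^1(K)\to H^2(K)$ has a nonzero kernel element $\eta=\sum a_{ij}\,\chi_i\wedge\chi_j$ with, say, $a_{mn}\neq 0$. Choosing generators $x_1,\dots,x_r$ of $K$ dual to the basis $\chi_1,\dots,\chi_r$ and setting $S=\langle x_m,x_n\rangle$, the subgroup $S$ is properly embedded (Fact~\ref{fact:proper1}), so $\res^1_{K,S}$ is surjective with kernel spanned by the $\chi_i$, $i\neq m,n$; the commutative square relating $\Lambda_2(\cup)$ on $K$ and on $S$ then forces $\Lambda_2(\cup)$ on $S$ to be the zero map, and since $S$ is Bloch-Kato (so $H^2(S)$ is generated by cup products of degree-one classes) this gives $H^2(S)=0$, i.e.\ $S$ is a free pro-$p$ group of rank $2$ --- contradicting (i). This single restriction argument replaces your entire proposed induction; without it, or an equivalent substitute, your proof of (i)$\Rightarrow$(iii) does not go through.
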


\begin{proof}
Suppose that (i) holds and that $G$ is not locally powerful. 
Then there exists a finitely generated subgroup $K\clsgp G$ which is not powerful.
In particular, the map
\begin{equation*}
\Lambda_2(\cup)\colon H^1(K)\wedge H^1(K)\longrightarrow  H^2(K)
\end{equation*}
is not injective. 
Let $\chi_1,\ldots,\chi_r$ be an $\F_p$-basis of the $\F_p$-vector space $H^1(K)$.
Thus there exists a non-trivial element
\[\eta=\sum_{1\leq i<j\leq r}\,a_{ij}.\chi_i\wedge\chi_j\in\kernel\left(\Lambda_2(\cup)\right).\]
As $\eta\not=0$, there exist $m,n\in\{1,\ldots,r\}$, $m<n$, such that $a_{mn}\not=0$.
Let $x_1,\ldots,x_r\in K$ be a minimal generating system of $K$
satisfying $\chi_i(x_j)=\delta_{ij}$ for all $i,j\in \{1,\ldots,r\}$,
and let $S=\langle x_m,x_n\rangle$. Then $S$ is properly embedded in $K$,
$\rho=\res^1_{K,S}\colon H^1(K)\to H^1(S)$ is surjective, and, by construction,
$\kernel(\rho)=\spa_{\F_p}\{\,\chi_i\mid 1\leq i\leq r,\, i\not=n,m\,\}$. 
From the surjectivity of $\rho\wedge\rho$ and the commutativity of the diagram
\begin{displaymath}
 \xymatrix{H^1(K)\wedge H^1(K)\ar@{->>}[d]_{\rho\wedge\rho}\ar[rr]^-{\Lambda_2(\cup)} && H^2(K)\ar@{->}[d]^{\res^2_{K,S}}\\
H^1(S)\wedge H^1(S)\ar[rr]^-{\Lambda_2(\cup)} && H^{2}(S)}
\end{displaymath}
one concludes that the map $\Lambda_2(\cup)\colon H^1(S)\wedge H^1(S)\to H^2(S)$ is the $0$-map.
Thus -- as $S$ is Bloch-Kato -- $H^2(S)=0$, i.e., $S$ is a 2-generated free pro-$p$ group
(see \cite[Proposition 3.5.17]{nsw}), a contradiction.
This shows that (i) implies (ii).

The implication $\text{(ii)}\Rightarrow\text{(i)}$ follows from the fact
that a free pro-$p$ group which is powerful must be cyclic.
Moreover, the equivalence $\text{(ii)}\Leftrightarrow \text{(iii)}$ follows from Theorem A.
If $G$ is finitely generated, the implication $\text{(ii)}\Rightarrow \text{(iv)}$ is well known 
(see \cite[Theorem 8.18]{analytic}),
whereas the implication $\text{(iv)}\Rightarrow \text{(i)}$ follows from \cite[Theorem 8.32]{analytic}.
This yields the claim.
\end{proof}

\begin{rem}
Notice that in the proof we do not require the group $G$ to be Bloch-Kato;
in fact it is enough to assume that the cohomology of every closed subgroup of $G$ is 
decomposable, i.e., it is generated in degree one (thus $G$ is {\it almost} Bloch-Kato, in the language of \cite{bk}). 
\end{rem}

\begin{cor}\label{corollmetabelian}Let $p$ be an odd prime, and let $G$ be a Bloch-Kato pro-$p$ group.
Then the following are equivalent
\begin{itemize}
\item[(i)] $G$ is powerful.
\item[(ii)] $G$ contains no free pro-$p$ groups of infinite rank.
\item[(iii)] There exists an orientation $\theta:G\rightarrow\Z_p^\times$ such that $G$ is $\theta$-abelian.
\end{itemize}
Furthermore, if $G$ is finitley generated, these properties are equivalent to
\begin{itemize}
\item[(iv)] $G$ is $p$-adic analytic.
\item[(v)] $cd(G)=d(G)$.
\item[(vi)] $H^\bullet(G)\cong\bigwedge_\bullet\left((G/\Phi(G))^*\right)$.
\end{itemize}
\end{cor}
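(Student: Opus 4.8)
The plan is to bootstrap everything off Theorem~\ref{thm3equivalenceBloch-KatoHP}, Propositions~\ref{propdimwBloch-Kato} and~\ref{cd(G)=d(G)}, and the Symonds--Weigel criterion (Theorem~\ref{thmsymondsthomas}). The first observation is that condition~(ii) of the corollary is the same as condition~(i) of Theorem~\ref{thm3equivalenceBloch-KatoHP}: a free pro-$p$ group of infinite rank is non-abelian, while conversely every non-abelian --- hence non-procyclic --- closed free pro-$p$ subgroup $F$ of $G$ contains a closed free pro-$p$ subgroup of countably infinite rank (for instance $\ker(F\to\Z_p)$ for a surjection sending one free generator to $1$ and the others to $0$, or directly from the pro-$p$ Nielsen--Schreier formula). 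Hence Theorem~\ref{thm3equivalenceBloch-KatoHP} already yields $\mathrm{(ii)}\Leftrightarrow\mathrm{(iii)}$ for every Bloch-Kato pro-$p$ group, together with $\mathrm{(ii)}\Leftrightarrow\mathrm{(iii)}\Leftrightarrow\mathrm{(iv)}$ when $G$ is finitely generated.

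Next I would prove $\mathrm{(iii)}\Rightarrow\mathrm{(i)}$. If $(G,\theta)$ is $\theta$-abelian then $\image(\theta)$ is a torsion-free procyclic subgroup of $1+p\Z_p$, hence trivial --- so that $G=\Zen_\theta(G)$ is abelian --- or isomorphic to $\Z_p$, in which case $1\to\ker(\theta)\to G\to\image(\theta)\to 1$ splits by projectivity of $\Z_p$ and $G=\overline{\langle x_1\rangle}\ltimes\ker(\theta)$ with $\ker(\theta)$ abelian and $x_1$ acting through $\theta$; in particular $G$ has the presentation~(\ref{presentationthetabelian}), cf.\ Proposition~\ref{facthetabelian}. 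In either case every commutator of $G$ lies in $\ker(\theta)$ and equals some $z^{\mu}$ with $z\in\ker(\theta)$ and $\mu\in p\Z_p$, since $\theta(g)-1\in p\Z_p$ for all $g\in G$; hence $[G,G]\subseteq\ker(\theta)^p\subseteq G^p$ and $G$ is powerful. When $G$ is moreover finitely generated this same description shows $G\cong\Z_p\ltimes\Z_p^{\,d-1}$ or $G\cong\Z_p^{\,d}$ with $d=d(G)$, a torsion-free $p$-adic analytic group of dimension $d$, so that $cd(G)=d=d(G)$; this gives $\mathrm{(iii)}\Rightarrow\mathrm{(v)}$. Conversely $\mathrm{(v)}\Rightarrow\mathrm{(vi)}$ by Proposition~\ref{cd(G)=d(G)} and $H^{1}(G)\cong(G/\Phi(G))^{*}$; $\mathrm{(vi)}\Rightarrow\mathrm{(v)}$ because the exterior algebra on a $d$-dimensional space is concentrated in degrees $\le d$ with $\bigwedge^{d}\ne 0$, while $cd(G)=\sup\{\,n\mid H^{n}(G)\ne 0\,\}$; and $\mathrm{(v)}\Rightarrow\mathrm{(i)}$ because by Proposition~\ref{cd(G)=d(G)} the map $\Lambda_2(\cup)$ is then an isomorphism, in particular injective, so $G$ is powerful by Theorem~\ref{thmsymondsthomas}. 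Finally, when $G$ is finitely generated $\mathrm{(i)}\Rightarrow\mathrm{(iv)}$ is automatic: a finitely generated powerful pro-$p$ group has finite rank and is therefore $p$-adic analytic.

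It remains to prove $\mathrm{(i)}\Rightarrow\mathrm{(iii)}$ for arbitrary $G$. I would reduce this to the assertion that a powerful Bloch-Kato pro-$p$ group $G$ (with $p$ odd) is locally powerful, since Theorem~\ref{thm3equivalenceBloch-KatoHP} then delivers~(iii). So let $H\clsgp G$ be finitely generated and suppose $H$ is not powerful. Running the argument in the proof of Theorem~\ref{thm3equivalenceBloch-KatoHP}, the map $\Lambda_2(\cup)\colon H^1(H)\wedge H^1(H)\to H^2(H)$ fails to be injective, and restricting a nonzero element of its kernel to a suitable $2$-generated subgroup exhibits a closed subgroup $S=\overline{\langle a,b\rangle}\le G$ which is free pro-$p$ of rank $2$. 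Everything therefore comes down to showing that a powerful pro-$p$ group admits no closed free pro-$p$ subgroup of rank $\ge 2$.

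This last step is, I expect, the only genuinely delicate point. One would exploit the structure of powerful pro-$p$ groups: from $[G,G]\subseteq G^{p}$ and the estimates $[G^{p^{i}},G^{p^{j}}]\subseteq G^{p^{i+j+1}}$ one gets $\lambda_{i}(G)=G^{p^{i-1}}$, whence $\gamma_{i}(S)\subseteq\lambda_{i}(G)=G^{p^{i-1}}$ for the lower central series of $S$; since $S$ is free pro-$p$ it is residually nilpotent, so $\bigcap_{i}\gamma_{i}(S)=1$, while each $G/G^{p^{n}}$ is powerful of exponent dividing $p^{n}$ and the image of $S$ in it is a quotient of the free $2$-generated nilpotent pro-$p$ group of class $n$ which is simultaneously of bounded exponent and embeddable in a powerful group --- and one checks that these constraints force that image to degenerate, contradicting the freeness of $S$. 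It should be stressed that the Bloch-Kato hypothesis is essential for $\mathrm{(i)}\Rightarrow\mathrm{(iii)}$: even a uniform pro-$p$ group need not be locally powerful, so ``powerful'' alone is very far from ``$\theta$-abelian''. The remaining implications, as indicated above, amount only to assembling results already in hand.
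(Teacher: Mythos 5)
Most of your assembly is correct and is exactly how the corollary is meant to fall out of the results already proved: identifying condition (ii) with condition (i) of Theorem~\ref{thm3equivalenceBloch-KatoHP} (a non-abelian closed free pro-$p$ subgroup contains a closed free subgroup of infinite rank, e.g.\ the kernel of a surjection onto $\Z_p$, and conversely), the implication (iii)$\Rightarrow$(i) via the presentation \eqref{presentationthetabelian}, and, for finitely generated $G$, the cycle (i)$\Rightarrow$(iv)$\Rightarrow$(iii)$\Rightarrow$(v)$\Rightarrow$(vi)$\Rightarrow$(v)$\Rightarrow$(i) using Propositions~\ref{propdimwBloch-Kato}, \ref{cd(G)=d(G)} and Theorem~\ref{thmsymondsthomas}. (The paper states the corollary without an explicit proof, so this is indeed the intended bookkeeping.)

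The gap is in (i)$\Rightarrow$(iii) for $G$ not finitely generated. Your reduction is fine up to the point where, using the Bloch--Kato property, a non-powerful finitely generated subgroup yields a closed free pro-$p$ subgroup $S$ of rank $2$; but you then discard the Bloch--Kato hypothesis and rest everything on the assertion that \emph{a powerful pro-$p$ group admits no closed free pro-$p$ subgroup of rank $\geq 2$}. That assertion is false. Let $A=\Z_p\langle\langle X,Y\rangle\rangle$ be the algebra of non-commutative formal power series, $T\subset A$ the closed ideal of series without constant term, and $G=1+pT$. Then $G$ is a pro-$p$ group; every commutator $[1+pf,1+pg]$ lies in $1+p^2T$, and a successive-approximation computation with $(1+p^{k-1}c)^p=1+p^kc+p^{k+1}d$ (valid for $p$ odd) shows $1+p^2T\subseteq G^p$, so $[G,G]\subseteq G^p$ and $G$ is powerful. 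On the other hand, by the Magnus isomorphism $\F_p\dbl F\dbr\cong\F_p\langle\langle X,Y\rangle\rangle$ the continuous homomorphism from the free pro-$p$ group $F$ of rank $2$ sending the generators to $1+X,1+Y$ is injective (hence also its lift over $\Z_p$), and composing with the injective substitution endomorphism $X\mapsto pX$, $Y\mapsto pY$ of $A$ gives an injective continuous homomorphism $F\to 1+pT$; its image is compact, hence closed. So the powerful group $G$ contains a closed free pro-$p$ subgroup of rank $2$ (of course $G$ is infinitely generated, which is why this does not clash with the finite-rank argument you correctly use in the finitely generated case). Consequently your sketched ``degeneration'' argument via $\gamma_i(S)\subseteq G^{p^{i-1}}$ and the quotients $G/G^{p^n}$ cannot be completed --- in the example above the images of $S$ in those quotients are perfectly healthy finite $p$-groups and $S$ is genuinely free --- and the Bloch--Kato hypothesis, which you yourself flag as essential, must actually be used in this step. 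As it stands, the implication (i)$\Rightarrow$(ii)/(iii) for infinitely generated $G$ is not proved by your argument; this is precisely the one implication of the corollary that goes beyond Theorem~\ref{thm3equivalenceBloch-KatoHP} and the finitely generated theory, and it still needs a real proof.
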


As we stressed in \S \ref{sec:prelim}, Bloch-Kato groups arise naturally as maximal pro-$p$ Galois groups
and $p$-Sylow subgroups of absolute Galois groups.
Thus the above results provide strong restrictions to such groups. In particular, one obtains the following result:

\begin{cor}\label{corollware}
Let $F$ be a field, such that $G_F(p)$ is a metabelian  pro-$p$ group (i.e., the commutator subgroup of $G_F(p)$ is abelian).
If $F\supseteq\mu_p$ then $G_F(p)$ has generators $\{\sigma,\rho_i\}_{i\in\mathcal{I}}$
with relations $[\rho_i,\rho_j]=1$ and $\rho_i^\sigma=\rho_i^{q+1}$,
where $q=0$ if $\mu_{p^k}\subseteq F$ for all $k\geq1$,
or $q=p^n$, where $n$ is the largest integer such that $F\supseteq\mu_{p^n}$.
\end{cor}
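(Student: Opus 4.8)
The plan is to read off the structure of $G:=G_F(p)$ from the classification of metabelian Bloch-Kato pro-$p$ groups obtained above, and then to recognise the orientation occurring there as the cyclotomic character. I take $p$ odd throughout (for $p=2$ the cyclotomic character need not land in $1+4.\Z_2$, so one should add the hypothesis $\mu_4\subseteq F$ or argue separately). Since $F\supseteq\mu_p$, the group $G$ is a Bloch-Kato pro-$p$ group, as recalled in Section~\ref{sec:prelim}. Being metabelian, $G$ has all of its closed subgroups metabelian; since a free pro-$p$ group of rank $\geq2$ has non-abelian derived subgroup, $G$ contains no closed free pro-$p$ subgroup of infinite rank, so condition (ii) of Corollary~\ref{corollmetabelian} holds. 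Hence, by (iii), there is an orientation $\theta\colon G\to\Z_p^\times$ with $(G,\theta)$ $\theta$-abelian, i.e., $\Zen_\theta(G)=\kernel(\theta)$.

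Next I would extract the presentation. The image $\image(\theta)$ is a closed subgroup of $1+p.\Z_p\cong\Z_p$, hence free pro-$p$, so, exactly as in the proof of Proposition~\ref{facthetabelian}, the extension $1\to\kernel(\theta)\to G\to\image(\theta)\to1$ splits and $G\cong\kernel(\theta)\rtimes\image(\theta)$, the action of $\image(\theta)$ on the abelian group $\kernel(\theta)=\Zen_\theta(G)$ being given by $\theta$. Choosing topological generators $\{\rho_i\}_{i\in\mathcal I}$ of $\kernel(\theta)$ and an element $\sigma\in G$ mapping to a topological generator of $\image(\theta)$, the set $\{\sigma\}\cup\{\rho_i\}_{i\in\mathcal I}$ generates $G$, one has $[\rho_i,\rho_j]=1$ for all $i,j$, and $\sigma\rho_i\sigma^{-1}=\rho_i^{\theta(\sigma)}$ for all $i$ directly from the definition of $\Zen_\theta$; since $G$ is the semidirect product, these are defining relations. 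Setting $q:=\theta(\sigma)-1\in p.\Z_p$ gives $\rho_i^\sigma=\rho_i^{q+1}$, so everything reduces to identifying the closed subgroup $\image(\theta)=\overline{\langle 1+q\rangle}$ of $1+p.\Z_p$.

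The heart of the proof is the identification $\theta=\bar\chi$, where $\bar\chi\colon G\to\Z_p^\times$ is the cyclotomic character, i.e., the character describing the action of $G$ on $\mu_{p^\infty}$; it is defined on $G=G_F(p)$ because $\mu_p\subseteq F$ and $p$ odd force $F(\mu_{p^\infty})/F$ to be pro-$p$, hence contained in $F(p)$, and its image is $\Gal(F(\mu_{p^\infty})/F)$, which equals $1+p^n.\Z_p$ with $n$ the largest integer such that $\mu_{p^n}\subseteq F$, or is trivial if $\mu_{p^\infty}\subseteq F$. Assume first that $G$ is non-abelian; then $\theta$ is the unique orientation making $G$ $\theta$-abelian, by the Fact preceding Proposition~\ref{facthetabelian}. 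Since $\kernel(\theta)$ is abelian, its fixed field $L$ satisfies that $F(p)/L$ is abelian, so a Kummer-type description of $\kernel(\theta)$ is available; a direct computation then shows that, for a lift $\sigma$ of a generator of $\image(\theta)$ and a Kummer generator $h$ of $\kernel(\theta)$, one has $\sigma h\sigma^{-1}=h^{\bar\chi(\sigma)}$. Comparing this with $\sigma h\sigma^{-1}=h^{\theta(\sigma)}$ and using that $\kernel(\theta)$ is torsion-free and non-trivial yields $\theta(\sigma)=\bar\chi(\sigma)$, hence $\theta=\bar\chi$ (alternatively, one may invoke Ware \cite{ware1} for this input). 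It follows that $\image(\theta)=\Gal(F(\mu_{p^\infty})/F)=1+p^n.\Z_p$, and, choosing $\sigma$ so that $\theta(\sigma)=1+p^n$, that $q=p^n$. In the remaining case $G$ is abelian, hence either procyclic (so that $\mathcal I=\varnothing$, the second family of relations is empty, and $q$ is immaterial) or of rank $\geq2$, in which case $\Zen_\theta(G)=\kernel(\theta)$ forces $\theta$ to be trivial; the same Kummer computation then forces $\bar\chi$ to be trivial too, so $\mu_{p^\infty}\subseteq F$, $q=0$, and $G$ is free abelian pro-$p$ on $\{\sigma\}\cup\{\rho_i\}$ (the freeness using the Bloch-Kato hypothesis). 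In every case the asserted presentation follows.

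The reduction to $\theta$-abelianity and the extraction of the semidirect-product presentation are routine. The main obstacle is the Galois-theoretic step: identifying the abstract orientation $\theta$ furnished by Corollary~\ref{corollmetabelian} with the cyclotomic character, together with the accompanying case distinction (non-abelian versus abelian $G$, and, within the abelian case, procyclic versus $\mu_{p^\infty}\subseteq F$). This is precisely the point where Kummer theory (equivalently, Ware's results \cite{ware1}) enters.
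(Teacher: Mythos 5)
Your skeleton coincides with the paper's: the paper likewise deduces the corollary from Corollary~\ref{corollmetabelian} (metabelian $\Rightarrow$ no non-abelian closed free subgroups $\Rightarrow$ $\theta$-abelian), reads off the semidirect-product presentation as in Proposition~\ref{facthetabelian}, and then identifies $\theta$ with the cyclotomic character $\bar\chi$ -- an identification the paper only asserts (together with $\Zen_\theta(G_F(p))=G_L(p)$, $L=F(\mu_{p^\infty})$), so that the whole weight of the corollary rests on it. The reduction, the extraction of generators and relations, and your $p$ odd caveat are all fine; the problem is that your justification of the identification does not yet close.

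The gap is exactly at the sentence ``a direct computation then shows that \dots\ $\sigma h\sigma^{-1}=h^{\bar\chi(\sigma)}$'': that formula \emph{is} the assertion $\theta=\bar\chi$, not a computation. Concretely, two things are missing. First, to apply Kummer theory over the fixed field $L_0$ of $\kernel(\theta)$ you need $\mu_{p^\infty}\subseteq L_0$, i.e.\ $\kernel(\theta)\subseteq\kernel(\bar\chi)$, which you never verify (in the non-abelian case it does follow, since $\rho^{\theta(\sigma)-1}=[\sigma,\rho]\in[G,G]\subseteq\kernel(\bar\chi)$ and $1+p\Z_p$ is torsion-free, so $\bar\chi(\rho)=1$). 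Second, Kummer equivariance only yields the semilinear rule $\langle\sigma h\sigma^{-1},\sigma(b)\rangle=\langle h,b\rangle^{\bar\chi(\sigma)}$, not that conjugation is exponentiation by $\bar\chi(\sigma)$; to compare exponents one must pair against $\sigma$-invariant radicals $b\in F^\times$, which combined with $\sigma h\sigma^{-1}=h^{\theta(\sigma)}$ and nondegeneracy of the Kummer pairing gives $b^{\theta(\sigma)-\bar\chi(\sigma)}\in(L_0^\times)^{p^k}$ for all $k$; if $\theta(\sigma)\neq\bar\chi(\sigma)$ this forces every $b\in F^\times$ to become a $p$-th power in $L_0$, hence every degree-$p$ Kummer extension of $F$ to lie inside the procyclic $\Z_p$-extension $L_0/F$, so $\dim_{\F_p}F^\times/(F^\times)^p\leq1$ and $G$ would be procyclic, a contradiction. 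Note also that the abelian case of rank $\geq2$ is not ``the same Kummer computation'': there $L_0=F$ and Kummer theory over $F$ presupposes $\mu_{p^\infty}\subseteq F$, which is what you are trying to prove; you must instead run the pairing over $F(\mu_{p^\infty})$ and use commutativity of $G$ to force $\bar\chi=1$. Finally, the fallback ``invoke Ware'' is delicate: the identification of the action with the cyclotomic character is precisely the part of \cite{ware1} established under the extra hypothesis $\mu_{p^2}\subseteq F$ that this corollary is meant to remove, so quoting it wholesale would be circular; only those of Ware's lemmas needing just $\mu_p$ may be imported.
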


This corollary provides the answer to a question raised by R.~Ware in his paper \cite[page 727]{ware1}.
Indeed he managed to prove that $G_F(p)$ has such a presentation 
if $F$ contains also a $p^2$th root of unity (and not only a $p$th root),
though it seemed reasonable that such an assumption is not necessary -- as, in fact, it is not.

In this case, the suitable orientation $\theta$ of $G_F(p)$ is the cyclotomic character, i.e., the map
\[\theta\colon G_F(p)\longrightarrow \Endm_F(\mu_{p^\infty})\cong\Z_p^\times,\]
where $\mu_{p^\infty}\leq \bar{F}^{sep}$ denotes the group of roots of unity of $p$-power order.

In particular, the $\theta$-center is $\Zen_\theta(G_F(p))=G_L(p)$, where $L=F(\mu_{p^\infty})$.

\begin{example}\label{examplesemidirect}
Let $q=p^n$ be a (non-trivial) $p$-power,
and let $F$ be the field $F=k((\mathfrak{X}))$,
where $k=\F_\ell(\mu_q)$, with $\ell\equiv1\bmod p$, and $\mathfrak{X}=\{X_1,\ldots,X_n\}$.
Then $G_F(p)$ has generators $\{\sigma,\rho_i\}_{i=1}^n$
with relations $[\rho_i,\rho_j]=1$ and $\rho_i^\sigma=\rho_i^{q+1}$.
Furhtermore, if $\mu_q\subseteq k$ for every $p$-power $q$,
then $G_F(p)$ is abelian, i.e., $G_F(p)\cong\Z_p^n$.
\end{example}

\bigskip

The case $p=2$ is more subtle, since the pro-2 version for Theorem \ref{thmsymondsthomas} is more involuted.
Thus it turns out that it is impossible to state Theorem~B also for Bloch-Kato pro-2.
For example the pro-2 dihedral group 
\[C_2\ltimes\Z_2(2)=\left\langle\sigma,\rho\left|\sigma^2=1,{^\sigma\rho}=\rho^{-1}\right.\right\rangle\]
is $\theta$-abelian, with $\theta(\sigma)=-1$, $\theta(\rho)=1$,
and it contains no non-abelian closed free pro-2 subgroups, yet it is not powerful.

Nevertheless, it is possible to get a similar result when we add more restrictions to $G$,
and using \cite[Theorem C]{weigelcohom}.

\begin{thm}
 Let $G$ be a Bloch-Kato pro-2 group such that $G$ is torsion-free,
and assume that the first Bockstein homomorphism $\beta:H^1(G)\rightarrow H^2(G)$ is trivial.
Then the following are equivalent:
\begin{itemize}
\item[(i)] Every non-trivial closed free subgroup of $G$ is cyclic.
\item[(ii)] $G$ is locally powerful.
\item[(iii)] there exists an orientation $\theta\colon G\rightarrow \Z_2^\times$ such that $(G,\theta)$ is $\theta$-abelian.
\end{itemize}
\end{thm}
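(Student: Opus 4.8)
The plan is to mimic the proof of Theorem~\ref{thm3equivalenceBloch-KatoHP} as closely as possible, substituting the pro-$2$ input of Symonds--Weigel for its pro-$p$ ($p$ odd) counterpart. The implications $\text{(ii)}\Leftrightarrow\text{(iii)}$ and $\text{(ii)}\Rightarrow\text{(i)}$ go through verbatim: Theorem~A (in the form \ref{ThmA}, together with Fact~\ref{facthetabelian} adjusted to $\image(\theta)\clsgp 1+4.\Z_2$) gives the equivalence of local powerfulness with $\theta$-abelianness for torsion-free finitely generated pro-$2$ groups, and a powerful free pro-$2$ group is cyclic. So the whole content is the implication $\text{(i)}\Rightarrow\text{(ii)}$, and this is where the Bockstein hypothesis enters.

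First I would argue by contradiction exactly as in the odd case: if $G$ is not locally powerful, pick a finitely generated closed $K\clsgp G$ that is not powerful. Now I need the pro-$2$ criterion for powerfulness. By \cite[Theorem C]{weigelcohom}, for a finitely generated pro-$2$ group $K$ the map $\Lambda_2(\cup)\colon H^1(K)\wedge H^1(K)\to H^2(K)$ is injective precisely when $K$ is powerful \emph{and} the Bockstein $\beta$ on $H^1(K)$ is suitably controlled --- more precisely, the relevant statement is that $\Lambda_2(\cup)$ fails to be injective when $K$ is not powerful, provided $\beta_K$ vanishes (which is why we assume $\beta_G=0$: since restriction $H^1(G)\to H^1(K)$ is surjective for $K$ properly embedded, and commutes with Bockstein, one deduces $\beta_K=0$ for every finitely generated $K\clsgp G$). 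So, granting $\beta_K=0$, non-powerfulness of $K$ yields a nonzero $\eta=\sum_{i<j}a_{ij}\chi_i\wedge\chi_j\in\kernel(\Lambda_2(\cup))$ with some $a_{mn}\ne0$. Then, choosing a dual generating set $x_1,\dots,x_r$ of $K$ and setting $S=\langle x_m,x_n\rangle$, properness of $S$ in $K$ gives surjectivity of $\rho=\res^1_{K,S}$, hence of $\rho\wedge\rho$, and the same commutative square as in the odd-$p$ proof forces $\Lambda_2(\cup)\colon H^1(S)\wedge H^1(S)\to H^2(S)$ to be the zero map. Here $H^1(S)\wedge H^1(S)$ is one-dimensional, spanned by $\chi_m|_S\wedge\chi_n|_S$, and because $S$ is Bloch-Kato with $\beta_S=0$ (again by restriction), the quadratic algebra $H^\bullet(S)$ is a quotient of the exterior algebra $\bigwedge_\bullet(H^1(S))$; the vanishing of the degree-$2$ cup product then forces $H^2(S)=0$. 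Hence $S$ is a $2$-generated free pro-$2$ group, contradicting (i). This proves $\text{(i)}\Rightarrow\text{(ii)}$.

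I would then close the cycle: $\text{(ii)}\Rightarrow\text{(iii)}$ and $\text{(iii)}\Rightarrow\text{(ii)}$ by Theorem~A (noting that torsion-freeness of $G$, which is assumed, is needed to apply the uniform-group version), and $\text{(ii)}\Rightarrow\text{(i)}$ since every finitely generated closed free subgroup of a locally powerful group is powerful, hence cyclic, and a general closed free subgroup of infinite rank would contain a $2$-generated free subgroup, also impossible. (One should take a moment to phrase (i) correctly: ``every non-trivial closed free subgroup is cyclic'' is equivalent to ``$G$ contains no non-abelian closed free pro-$2$ subgroup'', via passing to a $2$-generated subgroup.)

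The main obstacle, and the only real subtlety, is invoking \cite[Theorem C]{weigelcohom} in the precise form needed and justifying the descent of the Bockstein vanishing to subgroups. The pro-$2$ cup-product-injectivity criterion is more delicate than Theorem~\ref{thmsymondsthomas} because in characteristic $2$ one has $\beta(\chi)=\chi\cup\chi$, so the symmetric square, not just the exterior square, is involved; the hypothesis $\beta_G=0$ is exactly what strips away that extra term and makes the odd-$p$ argument applicable. I would need to verify carefully that Weigel's theorem gives: for finitely generated pro-$2$ group $K$ with $\beta_K=0$, the map $\Lambda_2(\cup)$ on $H^1\wedge H^1$ is injective iff $K$ is powerful --- and that $\beta_K=0$ indeed follows from $\beta_G=0$ by naturality of the Bockstein under the surjective restriction $H^1(G)\twoheadrightarrow H^1(K)$ for properly embedded $K$ (and every finitely generated subgroup can be enlarged to, or is already, properly embedded; in fact one only needs it for the specific $S$, which is properly embedded in $K$, which in turn one may assume properly embedded in $G$, or argue directly that $\beta$ restricts compatibly along arbitrary subgroup inclusions). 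Everything else is a transcription of the odd-$p$ argument.
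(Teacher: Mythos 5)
The paper actually states this theorem without proof (it only points to Weigel's Theorem~C as the pro-$2$ substitute for Theorem~\ref{thmsymondsthomas}), so your overall strategy of transcribing the proof of Theorem~\ref{thm3equivalenceBloch-KatoHP} is the intended one; but as written your argument has two genuine gaps, both precisely at the points where the Bockstein hypothesis has to do real work. First, the descent of $\beta=0$ from $G$ to the subgroups $K$ and $S$ is not justified. Naturality of the Bockstein under restriction only gives $\beta_K=0$ on the \emph{image} of $\res^1_{G,K}$, and this restriction map is not surjective for an arbitrary closed subgroup (for instance any subgroup contained in $\Phi(G)$ receives the zero restriction), so neither of your fallbacks works: you cannot ``argue directly that $\beta$ restricts compatibly'' to get $\beta_K=0$, and you cannot simply ``assume $K$ properly embedded in $G$'' -- nothing guarantees that a non-powerful finitely generated subgroup can be chosen properly embedded. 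This vanishing is essential twice over in characteristic $2$: the map $\Lambda_2(\cup)$ on $H^1(K)\wedge H^1(K)$ is only defined (and Weigel's criterion only applicable in the form you want) when the squares $\chi\cup\chi=\beta_K(\chi)$ vanish, and at the last step $H^2(S)=0$ does not follow from the vanishing of the wedge map alone unless $\beta_S=0$, since $H^2(S)$ also contains the squares. Without a genuine argument that $\beta$ vanishes for the relevant subgroups (or a version of Weigel's Theorem~C that does not presuppose it), the implication (i)$\Rightarrow$(ii) is not proved.

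Second, your claim that (ii)$\Leftrightarrow$(iii) ``goes through verbatim'' from Theorem~A is false for $p=2$ in the direction (iii)$\Rightarrow$(ii). Proposition~\ref{facthetabelian} needs $\image(\theta)\clsgp 1+4.\Z_2$, and torsion-freeness of $G$ only rules out $-1\in\image(\theta)$, not units congruent to $3$ modulo $4$: the group $G=\Z_2\ltimes\Z_2$ with $\sigma\rho\sigma^{-1}=\rho^{3}$ is torsion-free, finitely generated and $\theta$-abelian (with $\theta(\sigma)=3$, $\theta(\rho)=1$), yet $[G,G]\not\subseteq G^4$, so it is not locally powerful. So the equivalence you assert for all torsion-free finitely generated pro-$2$ groups is simply not true; to prove (iii)$\Rightarrow$(ii) under the stated hypotheses you must show that $\beta_G=0$ forces $\image(\theta)\subseteq 1+4.\Z_2$ (e.g.\ by checking that if some $\theta(g)\equiv 3\bmod 4$ and $\ker(\theta)\neq 1$, then no character of $G$ nontrivial on $\ker(\theta)$ lifts to $\Z/4$, so $\beta_G\neq 0$), and only then invoke Proposition~\ref{facthetabelian}. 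Your write-up never uses the Bockstein hypothesis in this direction at all, so this step as stated would fail.
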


\section{The class of Bloch-Kato pro-$p$ groups}

Some time ago I. Efrat has formulated a conjecture -- the so called ``elementary type conjecture'' --
for maximal pro-$p$ Galois groups,
which states that the group structure of maximal pro-$p$ Galois groups of some fields is very restricted,
namely such groups are free pro-$p$ products and semidirect products
of certain pro-$p$ groups (see \cite{eltypconj}, \cite{eltypconj2}).

It seems very difficult to decide whether such an ``elementary type'' conjecture should hold 
already for the class of finitely generated Bloch-Kato pro-$p$ groups.
All known examples of Bloch-Kato pro-$p$ groups have this property, but apart from this fact there is little evidence.

For this reason we investigate certain closure operations for the class of Bloch-Kato pro-$p$ groups. 

\subsection{Projective limits and free products of Bloch-Kato groups}
\begin{prop}\label{propinverselimit}
Let $\{G_i,\pi_{ij}\}_{i\in I}$ be projective system of Bloch-Kato pro-$p$ groups
with $\pi_{ij}$ surjective for all $i\leq j$, such that the maps 
\[\text{\emph{inf}}_{ij}^\bullet:H^\bullet(G_j)\rightarrow H^\bullet(G_i)\]
induced by $\pi_{ij}:G_j\rightarrow G_i$ are injective for any $i\leq j$.
Then for $\hat{G}=\varprojlim_iG_i$, the cohomology ring $H^\bullet(\hat{G})$ is quadratic.
\end{prop}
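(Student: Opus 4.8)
The plan is to realize the cohomology ring $H^\bullet(\hat G)$ as a suitable direct limit of the rings $H^\bullet(G_i)$, and then to check that the defining property of quadraticity -- that the multiplication $H^1\otimes H^1\to H^2$ is surjective and its kernel is generated in degree $2$ in the sense that $H^n$ is obtained from $H^1\otimes\cdots\otimes H^1$ by imposing only the degree-$2$ relations -- passes to such a limit. First I would recall the standard fact (from \cite[Ch. I]{gc} or \cite[Ch. I \S2]{nsw}) that continuous cohomology with $\F_p$-coefficients commutes with projective limits of profinite groups along surjections, so that the inflation maps $\mathrm{inf}_i^\bullet\colon H^\bullet(G_i)\to H^\bullet(\hat G)$ induce an isomorphism of graded $\F_p$-algebras
\[
\varinjlim_i H^\bullet(G_i)\;\xrightarrow{\ \sim\ }\;H^\bullet(\hat G),
\]
where the transition maps in the direct system are precisely the $\mathrm{inf}_{ij}^\bullet$, which by hypothesis are \emph{injective}. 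So $H^\bullet(\hat G)$ is an increasing union $\bigcup_i \mathrm{inf}_i^\bullet\!\left(H^\bullet(G_i)\right)$ of graded subalgebras, each isomorphic (as a graded ring) to the quadratic algebra $H^\bullet(G_i)$.

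Next I would exploit that each $H^\bullet(G_i)$ is quadratic to control both generation in degree one and the relations. For generation: any class in $H^n(\hat G)$ lies in some $\mathrm{inf}_i^\bullet(H^n(G_i))$, which by quadraticity of $H^\bullet(G_i)$ is a sum of $n$-fold cup products of classes in $H^1(G_i)$; applying $\mathrm{inf}_i^1$ and using compatibility of inflation with cup products, the class is a sum of $n$-fold cup products in $H^1(\hat G)$. Hence $H^\bullet(\hat G)$ is generated in degree one, i.e. it is a quotient of the tensor algebra (for $p=2$) resp. exterior algebra (for $p$ odd) on $H^1(\hat G)$. For the relations: let $I(\hat G)\subseteq T^\bullet(H^1(\hat G))$ be the kernel of the canonical surjection onto $H^\bullet(\hat G)$, and similarly $I(G_i)$ for each $G_i$. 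The key point is that $I(G_i)$ is generated by $I^2(G_i)=\ker\!\big(H^1(G_i)\otimes H^1(G_i)\to H^2(G_i)\big)$. Because $\mathrm{inf}_i^1$ is injective and the inflation maps are compatible with the algebra structures, a chase shows $I^2(\hat G)=\bigcup_i \mathrm{inf}_i^{\otimes 2}\!\big(I^2(G_i)\big)$ and, more importantly, that in each degree $n$ the $n$-th graded piece $I^n(\hat G)$ is exactly $\bigcup_i \mathrm{inf}_i^{\otimes n}\!\big(I^n(G_i)\big)$; since each $I^n(G_i)$ is spanned by elements of the form $u\otimes r\otimes w$ with $r\in I^2(G_i)$, the same holds after pushing forward, so $I(\hat G)$ is generated in degree $2$. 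This is precisely quadraticity of $H^\bullet(\hat G)$.

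The main obstacle I anticipate is the bookkeeping needed to pass the \emph{relations} (not just the generators) to the limit cleanly: one must argue that a relation among cup products that holds in $H^\bullet(\hat G)$ is already witnessed in some finite stage $H^\bullet(G_i)$, and here the injectivity hypothesis on the $\mathrm{inf}_{ij}^\bullet$ is essential -- without it, a relation could appear in the limit that is not detected at any finite stage, and the argument would collapse. Concretely, one takes a relation $\sum_k u_k\otimes v_k\in I^2(\hat G)$; each $u_k,v_k$ lies in the image of some $H^1(G_{i_k})$, so by directedness all lie in the image of $H^1(G_j)$ for a single $j$; injectivity of $\mathrm{inf}_j^2$ then forces the corresponding element of $H^1(G_j)\otimes H^1(G_j)$ to lie in $I^2(G_j)$, and quadraticity of $H^\bullet(G_j)$ finishes. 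The degree-$n$ case is the same idea applied to $I^n(G_j)$. I would also remark at the end that this applies in particular to inverse limits of uniform (e.g. $\theta$-abelian) pro-$p$ groups along the natural quotient maps, linking back to the examples of the previous section.
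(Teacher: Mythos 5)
Your proposal is correct and follows essentially the same route as the paper: identify $H^\bullet(\hat G)$ with $\varinjlim_i H^\bullet(G_i)$ (the paper cites \cite[Proposition 1.5.1]{nsw}) and then use that quadraticity is preserved under directed limits with injective transition maps. The only difference is that the paper merely asserts this closure property of quadratic algebras, whereas you spell out the (correct) argument that both generation in degree one and the degree-two relations are witnessed at a finite stage thanks to the injectivity hypothesis.
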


\begin{proof}
It is well known that 
\[\varinjlim_{i\in I}H^n(G_i)\cong H^n(\hat{G})\]
for every $n\geq0$ \cite[Proposition 1.5.1]{nsw}.

Moreover, the class of quadratic $\F_p$-algebras is closed under certain direct limits:
namely if $A_\bullet^i$ is a quadratic $\F_p$-algebra for all $i\geq0$ with $A_\bullet=\varinjlim_iA_\bullet^i$
and such that the maps $A_n^i\rightarrow A_n^j$ are injective for all $i\leq j$,
then $A_\bullet$ is quadratic.
This implies that $H^\bullet(\hat{G})$ is quadratic.
\end{proof}

In order to state and prove the following theorem, we need O. Mel'nikov's version of the Kurosh subgroup theorem
for free pro-$p$ products (see \cite{freeprodMelnik}).

Let $T$ be a profinite space, and let $\{G_t\}_{t\in T}$ be a family of pro-$p$ groups.
Then such a family defines a {\it sheaf} $\mathcal{G}$ of pro-$p$ groups, i.e., a profinite space $\mathcal{G}$ together
with a continuous surjection $\gamma:\mathcal{G}\rightarrow T$ such that for all $t\in T$, $\gamma^{-1}(t)=G_t$,
and the group operation of $G_t$ depends continuously on $t$.
The free pro-$p$ product of the family $\{G_t\}$ is the pro-$p$ group $G=\coprod_t G_t$ together with a morphism
$\iota\colon\mathcal{G}\rightarrow G$ such that for any pro-$p$ group $H$ and for any
continuous map $\varphi\colon\mathcal{G}\rightarrow H$ whose restrictions $\varphi|_{G_t}\colon G_t\rightarrow H$
are all homomorphisms of pro-$p$ groups, there exists a unique homomorphism $\widetilde{\varphi}\colon G\rightarrow H$
such that $\widetilde{\varphi}\circ\iota=\varphi$.

\begin{thm}\label{freeprodofBloch-Kato}
Let $G=\coprod_tG_t$ be the free product in the category of pro-$p$ groups of a family of Bloch-Kato pro-$p$ groups
$\{G_t\}_{t\in T}$, where $T$ is a profinite space.
Then $G$ is a Bloch-Kato pro-$p$ group.
In particular, the free pro-$p$ product of two Bloch-Kato pro-$p$ groups $G_1$ and $G_2$ is a Bloch-Kato pro-$p$ group.
\end{thm}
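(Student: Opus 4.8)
The plan is to reduce the statement about an arbitrary closed subgroup $K$ of the free pro-$p$ product $G = \coprod_t G_t$ to the Bloch-Kato property of the factors $G_t$ via the Kurosh subgroup theorem for free pro-$p$ products. By Mel'nikov's version of the theorem, every closed subgroup $K \leq_c G$ decomposes as a free pro-$p$ product
\[
K = \left(\coprod_{s\in S} K_s\right) \amalg F,
\]
where $F$ is a free pro-$p$ group and each free factor $K_s$ is of the form $K \cap g G_{t(s)} g^{-1}$ for suitable $g \in G$; in particular each $K_s$ is a closed subgroup of a conjugate of some $G_t$, hence itself a closed subgroup of a Bloch-Kato pro-$p$ group, hence its $\F_p$-cohomology ring is quadratic. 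Since $F$ is free, $H^\bullet(F)$ is quadratic as well (it is concentrated in degrees $0$ and $1$). So the task is reduced to showing: \emph{a free pro-$p$ product of pro-$p$ groups each of whose $\F_p$-cohomology ring is quadratic again has quadratic $\F_p$-cohomology ring.}

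For this last step I would use the cohomological behaviour of free pro-$p$ products. For a free pro-$p$ product $G = \coprod_t G_t$ over a profinite space $T$ one has, in positive degrees, a natural isomorphism
\[
H^n(G,\F_p) \;\cong\; \bigoplus_t H^n(G_t,\F_p)\qquad (n\geq 1),
\]
(more precisely an appropriate "continuous direct sum"/sheaf-cohomology version when $T$ is infinite — this is the analogue for pro-$p$ groups of the Mayer--Vietoris/Barratt--Whitehead computation for free products, and can be extracted from the Lyndon--Hochschild--Serre machinery or from \cite{nsw}, \cite{freeprodMelnik}). Under this isomorphism the cup product on $H^{\geq 1}(G)$ restricted to the summand coming from a single $G_t$ is the cup product of $G_t$, while the cup product of classes coming from two distinct factors vanishes. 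Consequently $H^\bullet(G) = \F_p \oplus \bigoplus_t H^{\geq 1}(G_t)$ as a graded ring is exactly the "coproduct" (fibre sum over $\F_p$ in degree $0$) of the graded algebras $H^\bullet(G_t)$. It is then a purely algebraic fact that such a coproduct of quadratic $\F_p$-algebras is quadratic: a presentation by generators in degree $1$ and relations in degree $2$ for each $H^\bullet(G_t)$ assembles to such a presentation for the coproduct, because the only new relations introduced are the vanishing of cross products $\chi_i \cup \chi_j$ with $\chi_i, \chi_j$ coming from different factors, and these are already quadratic relations. One must take a little care to phrase this limit-theoretically when $T$ is infinite, but this is handled exactly as in Proposition \ref{propinverselimit}: write $G$ as an inverse limit of free products of finitely many of the $G_t$ and invoke that a suitable direct limit of quadratic algebras with injective transition maps is quadratic.

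The main obstacle I anticipate is the bookkeeping in the Kurosh decomposition over a profinite, possibly infinite, indexing space $T$: one needs that the free factors $K_s$ really are (conjugates intersected with $K$ of) the $G_t$ and hence inherit the Bloch-Kato hypothesis, and that the "continuous" free product structure is compatible with the cohomological direct-sum decomposition. Once the correct profinite formulations of the Kurosh theorem and of the cohomology of a free pro-$p$ product are in hand, the argument is formal. I would therefore structure the proof as: (1) quote Mel'nikov--Kurosh to get $K = \left(\coprod_s K_s\right)\amalg F$ with each $K_s$ a closed subgroup of some conjugate of a factor and $F$ free; (2) observe each $H^\bullet(K_s)$ and $H^\bullet(F)$ is quadratic by hypothesis resp. by freeness; (3) reduce to the finite case and prove that the graded-ring coproduct over $\F_p$ of quadratic algebras is quadratic, using the cohomology computation of a free pro-$p$ product and the cup-product vanishing across distinct factors; (4) pass to the limit over $T$ as in Proposition \ref{propinverselimit}. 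The final sentence of the statement — that the free product of two Bloch-Kato pro-$p$ groups is Bloch-Kato — is the special case $T = \{1,2\}$.
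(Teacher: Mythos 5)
Your proposal follows essentially the same route as the paper: Mel'nikov's pro-$p$ Kurosh theorem is used to decompose an arbitrary closed subgroup $K$ into a free pro-$p$ product of a free factor and the subgroups $K\cap G_t^r$, and then the positive-degree direct-sum decomposition of the cohomology of a free pro-$p$ product (with vanishing cross cup products) gives quadraticity of $H^\bullet(K)$. The only divergence is the bookkeeping over the profinite index space: where you propose reducing to finite subproducts and passing to a limit as in Proposition \ref{propinverselimit}, the paper instead invokes Mel'nikov's corestriction isomorphisms in homology and dualizes via Pontryagin duality, which handles the continuous direct sum in one step.
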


\begin{proof} 
Let $K$ be a closed subgroup of $G$.
Then by \cite[Theorem 4.3]{freeprodMelnik} it is possible to decompose $K$ in the following way:
\[K=\left(\coprod_{t\in T}\left(\coprod_{ K\backslash G/G_t}\left(K\cap G_t^r\right)\right)\right)\sqcup S,\]
where $S$ is a free pro-$p$ group and the $r$ vary over a set $\mathcal{R}_t\subset G$ of representatives of the coset
space $K\backslash G/G_t$ -- which is profinite.

In particular, $K$ is the free pro-$p$ product (over a profinite set) of closed subgroups of the groups $G_t$.
Let $K_t=\coprod_{r\in\mathcal{R}_t}(K\cap G_t^r)$, so that $K=S\sqcup(\coprod_tK_t)$.
As a consequence of \cite[Theorems 4.1 and 4.2]{freeprodMelnik}, one has that the homology corestriction maps
\begin{eqnarray*}
&&\text{cor}_n^{K_t}\colon\bigoplus_{K\backslash G/G_t}H_n\left(K\cap G_t^r,\F_p\right)\longrightarrow H_n\left(K_t,\F_p\right),\quad\text{and}\\
&&\text{cor}_n^{K}\colon\bigoplus_{t\in T}H_n\left(K_t,\F_p\right)\longrightarrow H_n\left(K,\F_p\right)
\end{eqnarray*}
are isomorphisms for $n\geq1$.
By Pontryagin duality, i.e., \[H_\bullet\left(G,\F_p\right)^*\cong H^\bullet\left(G,\F_p^*\right),\]
also the cohomology restriction maps
\begin{eqnarray}\label{freecohom1}
&&\text{res}_{K_t}^n\colon H^n\left(K_t\right)\longrightarrow\bigoplus_{K\backslash G/G_t}H^n\left(K\cap G_t^r\right),\quad\text{and}\\
&&\text{res}_K^n\colon H^n\left(K\right)\longrightarrow\bigoplus_{t\in T}H^n\left(K_t\right)\label{freecohom2}
\end{eqnarray}
are isomorphisms for $n\geq1$.
Since the groups $G_t$ are Bloch-Kato pro-$p$ groups, the cohomology rings $H^\bullet(K\cap G_t^r)$ are quadratic.
Thus, by (\ref{freecohom1}) and (\ref{freecohom2}) also the cohomology rings $H^\bullet(K_t)$ and $H^\bullet(K)$
are quadratic.
Therefore $G$ is Bloch-Kato.\end{proof}

\subsection{Direct products of Bloch-Kato groups}
It seems natural to consider direct products of Bloch-Kato pro-$p$ groups.
The next result is a consequence of Theorem~A:

\begin{prop}\label{propdirectprodtheta}
The direct product of a powerful non-abelian Bloch-Kato group $G$ with any pro-$p$ group is not Bloch-Kato.
\end{prop}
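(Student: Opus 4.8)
The plan is to assume for a contradiction that $G\times H$ is Bloch-Kato (the case $H=1$ being vacuous, as then $G\times H=G$) and to exhibit a finitely generated closed subgroup $K\clsgp G\times H$ whose cohomology fails to be quadratic. Throughout I take $p$ odd, so that Corollary~\ref{corollmetabelian}, Proposition~\ref{propdimwBloch-Kato}, and the bound (\ref{inequalityBloch-Kato}) are available; the case $p=2$ is commented on at the end. First I would extract from $G$ a non-abelian copy of $\Z_p\ltimes\Z_p$. Since $G$ is powerful and non-abelian, Corollary~\ref{corollmetabelian} shows $G$ is $\theta$-abelian for some orientation $\theta$, and $\theta$ must be non-trivial, as otherwise $\theta$-abelianness would force $G$ to be abelian. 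Non-triviality of $\theta$ together with $\ker(\theta)\neq1$ yields $a\in G$ with $\theta(a)=1+\lambda$, $\lambda\neq0$, and $1\neq b\in\ker(\theta)$; the relation $aba^{-1}=b^{\theta(a)}=b^{1+\lambda}$ then shows $G_0:=\overline{\langle a,b\rangle}\cong\Z_p\ltimes\Z_p$ is non-abelian. As $G\times H$ is Bloch-Kato and $p$ is odd, it is torsion-free, hence so is $H$; fixing $1\neq z\in H$ and setting $c=(1,z)$, the element $c$ is central of infinite order and $\overline{\langle b,c\rangle}\cong\Z_p^2$.

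The key construction is the twisted subgroup $K=\overline{\langle a,\,bc\rangle}$. Conjugating gives $a(bc)a^{-1}=b^{1+\lambda}c=(bc)\cdot b^{\lambda}$, so $K$ contains $b^{\lambda}$, and one checks that $K\cap\overline{\langle b,c\rangle}=\overline{\langle bc,\,b^{\lambda}\rangle}$ is a rank-two lattice of finite index $p^{v_p(\lambda)}$ in $\Z_p^2$. Thus $K$ is open in $G_0\times\overline{\langle c\rangle}\cong(\Z_p\ltimes\Z_p)\times\Z_p$, and in the basis $\{bc,\,b^{\lambda}\}$ the generator $a$ acts on $K\cap\overline{\langle b,c\rangle}$ by $\left(\begin{smallmatrix}1&0\\1&1+\lambda\end{smallmatrix}\right)$, which reduces modulo $p$ to a non-trivial unipotent. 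This non-semisimple (non-diagonalisable over $\Z_p$) action is the whole point: it is incompatible with the exterior-algebra behaviour that Bloch-Kato forces.

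To produce the contradiction I would compare $d(K)$ with $\cohdim(K)$. On the one hand $K$ is generated by the two elements $a$ and $bc$ and is non-abelian, so $d(K)=2$. On the other hand $K$ is open in $G_0\times\overline{\langle c\rangle}$, whose cohomological dimension is $\cohdim(\Z_p\ltimes\Z_p)+\cohdim(\Z_p)=2+1=3$ by the K\"unneth formula; since the cohomological dimension of a pro-$p$ group is unchanged on passing to an open subgroup (see \cite{nsw}), $\cohdim(K)=3$. Hence $\cohdim(K)=3>2=d(K)$, contradicting Proposition~\ref{propdimwBloch-Kato}(i) applied to the finitely generated Bloch-Kato group $K$. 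Equivalently, a Lyndon--Hochschild--Serre computation for $1\to\Z_p^2\to K\to\Z_p\to1$ gives $\dim_{\F_p}H^2(K)=2>1=\binom{2}{2}$, violating (\ref{inequalityBloch-Kato}); or, most directly, the image of the cup product lies in the one-dimensional space $H^1(K)\wedge H^1(K)$ inside the two-dimensional $H^2(K)$, so $H^\bullet(K)$ is not even generated in degree one, hence not quadratic. Either way $K$ is not Bloch-Kato, a contradiction.

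I expect the main obstacle to be the structural analysis of $K$ in the second step, namely verifying simultaneously that $d(K)=2$ while $\cohdim(K)=3$: this is exactly where the non-semisimplicity of the action enters, and where the naive subgroups (for instance $G_0\times\overline{\langle c\rangle}$ itself, which behaves like a $\theta$-abelian group and has quadratic, indeed exterior, cohomology) fail to yield any contradiction. For $p=2$ the same $K$ is the natural candidate, but since neither $\cohdim\le d$ nor (\ref{inequalityBloch-Kato}) is available one must instead compute the ring $H^\bullet(K)$ and check directly that its degree-two part is not the quadratic part; establishing this is the additional obstacle in the even case.
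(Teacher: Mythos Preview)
Your argument is correct (for $p$ odd, as you note) and proceeds by a genuinely different route from the paper's. The paper reduces to $H=\Z_p$ and applies the Tits alternative (Theorem~B) directly to the product $\widetilde{G}=G\times\Z_p$: since $\widetilde{G}$ is metabelian it contains no non-abelian free pro-$p$ subgroup, so if it were Bloch-Kato it would itself be $\widetilde{\theta}$-abelian for some orientation $\widetilde{\theta}$; but $\widetilde{\theta}|_G$ must be non-trivial (as $G$ is non-abelian), and then the central $\Z_p$ factor cannot satisfy the $\widetilde{\theta}$-center condition $gzg^{-1}=z^{\widetilde{\theta}(g)}$, a contradiction. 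You instead exhibit an explicit cohomological witness inside $G\times H$. Your structural claim about $K=\overline{\langle a,bc\rangle}$ is sound: the commutator $[a,bc]=b^\lambda$ shows $K$ contains $\langle a\rangle\ltimes\overline{\langle bc,b^\lambda\rangle}$, and this already has index $p^{v_p(\lambda)}$ in the $3$-dimensional uniform group $G_0\times\overline{\langle c\rangle}$, so $K$ is open there and $\cohdim(K)=3>2=d(K)$, contradicting Proposition~\ref{propdimwBloch-Kato}(i). The obstacle you anticipated does not materialise.

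The paper's route is shorter and purely structural, invoking the classification theorem a second time (for $\widetilde{G}$ rather than just for $G$); yours avoids that re-application at the cost of an explicit construction, but gains something in return: it pinpoints exactly why quadraticity breaks, namely the non-semisimple action of $a$ on $\overline{\langle bc,b^\lambda\rangle}$ modulo $p$, which forces $\dim_{\F_p}H^2(K)=2$ while the cup product from degree one can fill at most $\binom{2}{2}=1$ dimension. Both arguments implicitly require $p$ odd, and your remark that the case $p=2$ would need a direct ring computation is consistent with how the paper handles the even prime elsewhere.
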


\begin{proof}
Let $\widetilde{G}=G\times \Z_p$, with $G$ $\theta$-abelian, but not abelian,
and suppose $\widetilde{G}$ is Bloch-Kato.
Since $\widetilde{G}$ contains no free pro-$p$ groups of rank greater than 1,
$\widetilde{G}$ must be $\widetilde{\theta}$-abelian, for some orientation
$\widetilde{\theta}\colon\widetilde{G}\rightarrow\Z_p^\times$ such that $\widetilde{\theta}|_G=\theta$.
But the action of $G$ on $\Z_p$ is trivial, and $G$ is non-abelian (i.e., $\theta$ is not trivial),
thus $\Z_p\nleqslant\Zen_\theta(G)$, and $\widetilde{G}$ cannot be $\widetilde{\theta}$-abelian, a contradiction.
Therefore $\widetilde{G}$ is not Bloch-Kato,
and this implies that the direct product of a powerful non-abelian Bloch-Kato group
with any non-trivial pro-$p$ group is not Bloch-Kato.
\end{proof}

However, one has the following.

\begin{thm}\label{thmdirectfrouctZpfree}
Let $S$ be a free pro-$p$ group, and let $\widetilde{G}=\Z_p\times S$.
Then $\widetilde G$ is a Bloch-Kato pro-$p$ group. 
\end{thm}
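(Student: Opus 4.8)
The plan is to compute the $\F_p$-cohomology ring of every closed subgroup $K$ of $\widetilde{G}=\Z_p\times S$ and verify it is quadratic. First I would recall that a closed subgroup $K$ of a direct product $A\times S$ projects onto a closed subgroup of each factor; since $\Z_p$ is abelian and $S$ is free pro-$p$, one can hope to pin down the structure of $K$ rather explicitly. The key structural observation should be a Kurosh-type or direct-product decomposition: writing $\pi\colon \widetilde{G}\to\Z_p$ for the projection, $K\cap S=\ker(\pi|_K)$ is a closed subgroup of the free pro-$p$ group $S$, hence is itself free pro-$p$, and $K/(K\cap S)$ embeds in $\Z_p$, so it is either trivial or isomorphic to $\Z_p$. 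In the first case $K\leq S$ is free, and free pro-$p$ groups are Bloch-Kato (cohomological dimension $\leq 1$, so $H^\bullet$ is a quadratic algebra trivially). In the second case we get a split extension $1\to K_0\to K\to\Z_p\to 1$ with $K_0=K\cap S$ free pro-$p$ and the $\Z_p$-action on $K_0$ induced by an inner automorphism of $\widetilde{G}$ coming from the second coordinate — but since the first factor $\Z_p$ is central in $\widetilde{G}$, this action is trivial. Hence $K\cong \Z_p\times K_0$ with $K_0$ free pro-$p$, and we have reduced the whole statement to the single case $K=\widetilde{G}$ itself (with $S$ replaced by an arbitrary free pro-$p$ group).

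So the crux is: if $S$ is free pro-$p$, then $H^\bullet(\Z_p\times S)$ is a quadratic $\F_p$-algebra. Here I would use the Lyndon–Hochschild–Serre spectral sequence for $1\to S\to \Z_p\times S\to \Z_p\to 1$, or equivalently the Künneth formula in continuous cohomology with $\F_p$-coefficients. Since $H^\bullet(\Z_p)$ is the exterior algebra $\bigwedge_\bullet(\F_p)$ on one generator $\tau$ in degree $1$ (with $\tau\cup\tau=0$ because $p$ is odd; for $p=2$ one must instead note $H^\bullet(\Z_2)=\F_2[\tau]/(\tau^2)$ since $\Z_2$ is torsion-free, so no Bockstein contribution), and $H^\bullet(S)=\F_p\oplus H^1(S)$ with all cup products of positive-degree classes vanishing (as $\cohdim S\leq 1$), the Künneth formula gives
\[
H^\bullet(\Z_p\times S)\;\cong\;H^\bullet(\Z_p)\otimes_{\F_p}H^\bullet(S).
\]
Concretely, $H^1(\widetilde{G})=\F_p\tau\oplus H^1(S)$, $H^2(\widetilde{G})=\tau\cup H^1(S)$, and $H^n(\widetilde{G})=0$ for $n\geq 3$. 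One then checks directly that the relations among the generators $\tau$ and a basis $\{\chi_i\}$ of $H^1(S)$ are exactly $\tau\cup\tau=0$ and $\chi_i\cup\chi_j=0$ for all $i,j$ — all quadratic — and that these generate the full ideal of relations, so $H^\bullet(\widetilde G)$ is the quadratic algebra $\bigl(\bigwedge_\bullet(\F_p\tau)\bigr)\otimes\bigl(\F_p\oplus H^1(S)\bigr)$ with the stated presentation. Combining with the reduction above, every closed subgroup has quadratic cohomology, so $\widetilde{G}$ is Bloch-Kato.

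The step I expect to be the main obstacle is the structural claim that an arbitrary closed subgroup $K$ of $\Z_p\times S$ is again of the form (free pro-$p$) or $\Z_p\times(\text{free pro-}p)$: one must be careful that $K\cap S$ being free is clear (closed subgroups of free pro-$p$ groups are free), but identifying the quotient $K/(K\cap S)$ as a subgroup of $\Z_p$ and splitting the extension requires that $\Z_p$ be projective as a pro-$p$ group (which it is) and that the induced action be trivial (which follows from centrality of the $\Z_p$-factor). A cleaner alternative avoiding a case analysis on $K$ would be to observe that $\Z_p\times S$ has cohomological dimension $2$ and is a "pro-$p$ Poincaré-type" extension, but the direct Künneth computation together with the subgroup reduction is the most transparent route, and each individual step is routine once the decomposition of $K$ is in hand.
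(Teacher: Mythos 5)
Your Künneth computation of $H^\bullet(\Z_p\times S)$ is fine and is essentially what the paper does (it quotes \cite[Theorem 2.4.6]{nsw} to get $H^n(\widetilde{G})\cong H^n(S)\oplus H^{n-1}(S)$ and reads off quadraticity), and your reduction of a general closed subgroup $K$ follows the same route as the paper: set $K_0=K\cap S$, note $K/K_0$ embeds in $\Z_p$, split the extension. But the step you yourself flag as the main obstacle is where the argument genuinely breaks: triviality of the action of the complement on $K_0$ does \emph{not} follow from centrality of the $\Z_p$-factor. A splitting of $1\to K_0\to K\to\Z_p\to 1$ is a section $a\mapsto(\alpha(a),\sigma(a))\in K\leq\Z_p\times S$, and its conjugation action on $K_0\leq 1\times S$ is conjugation by the $S$-component $\sigma(a)$, which need not centralize $K_0$. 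Concretely, let $S$ be free on $x,y$, let $\phi\colon S\to\Z_p$ send $x\mapsto 1$, $y\mapsto 0$, and let $K=\{(\phi(s),s)\,:\,s\in S\}$ be the graph of $\phi$: this is a closed subgroup, $K\cong S$ is free of rank $2$, $K_0=K\cap S\cong\kernel(\phi)\neq 1$, and $K$ maps onto $\Z_p$, yet $K\not\cong\Z_p\times K_0$ (the left-hand side has trivial center and cohomological dimension $1$, the right-hand side has neither). So the claim ``$K\cong\Z_p\times(K\cap S)$'' is false in general, and this is a real gap in your proposal; for what it is worth, the paper's own proof contains the same unjustified step, phrased as ``$K\cong\Z_p\times N$ since $A\leq\Zen(\widetilde{G})$''.

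The structural statement you need --- every closed subgroup of $\Z_p\times S$ is either free pro-$p$ or of the form $\Z_p\times(\text{free pro-}p)$ --- is nevertheless true, and the fix is to project onto the \emph{other} factor: put $C=K\cap(\Z_p\times 1)$, which really is central in $\widetilde{G}$, hence in $K$, and is trivial or isomorphic to $\Z_p$; the image of $K$ under the projection to $S$ is a closed subgroup of a free pro-$p$ group, hence free, hence projective, so $1\to C\to K\to p_2(K)\to 1$ splits, and centrality of $C$ forces $K\cong C\times p_2(K)$. With this one-line correction your reduction is sound (note it also handles my example correctly, giving $C=1$ and $K$ free), and the rest of your argument --- the Künneth description of $H^\bullet(\Z_p\times(\text{free}))$ and its quadraticity --- goes through as written and matches the paper's computation.
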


\begin{proof}
First of all we show that the cohomology ring $H^\bullet(\widetilde G)$ is a quadratic $\F_p$-algebra.
By \cite[Theorem 2.4.6]{nsw}, one has that 
\[H^n\left(\widetilde G\right)=H^n(S)\oplus H^{n-1}(S),\quad \text{for }n\geq1.\]
In particular, let $\{x_i\}_{i\in I}$ be a set of free generators of $S$, and let $y$ be a generator of $\Z_p$.
Moreover, let $x_i^*\in H^1(S)$ be the Pontryagin dual of $x_i$, for all $i\in I$,
and let $y^*\in H^1(\Z_p)$ be the dual of $y$.
Then 
\[H^\bullet\left(\widetilde G\right)=\F_p\oplus\spa_{\F_p}\left\{ x_i^*,y^*\right\}_{i\in I}\oplus
\spa_{\F_p}\left\{ x_i^*\cup y^*\right\}_{i\in I},\]
namely, $H^\bullet(\widetilde G)$ is a quadratic $\F_p$-algebra.

Let $K$ be a closed subgroup of $\widetilde G$, and put $N=K\cap S$.
Then $N$ is a free pro-$p$ group, and $N\clno K$, moreover one has the following commutative diagram:
\begin{displaymath}
 \xymatrix{
1\ar[r] & S\ar[r] & \widetilde G \ar[r] & \Z_p\ar[r] & 1\\
1\ar[r] & N\ar[r]\ar@{^{(}->}[u] & K \ar[r]\ar@{^{(}->}[u] & A\ar[r]\ar@{^{(}->}[u] & 1}
\end{displaymath}
where either $A$ is isomorphic to $\Z_p$ or it is trivial.
In the latter case $K\cong N$, and $K$ is a Bloch-Kato group.
Otherwise, the lower line of the diagram becomes
\begin{equation}\label{sesdirectproduct}
 1\longrightarrow N\longrightarrow K\longrightarrow \Z_p\longrightarrow 1.
\end{equation}
Since $\Z_p$ is a projective pro-$p$ group, (\ref{sesdirectproduct}) splits,
and we have the isomorphism $K\cong \Z_p\ltimes N$.
In particular, $K\cong\Z_p\times N$, since $A\leq\Zen(\widetilde G)$.
Therefore, the above argument implies that $H^\bullet(K)$ is a quadratic $\F_p$-algebra, and this proves the theorem.
\end{proof}

On the other hand, direct products of non-abelian free pro-$p$ groups are not Bloch-Kato.
For the proof of the next theorem we make use of the following fact, the easy proof of which we leave to the reader.

\begin{fact}\label{modulefact}
Let $G$ be a finite $p$-group, and let $M$ be a finitely generated left $\F_p[G]$-module. Then
\begin{equation}\label{moduleformula}
\dim_{\F_p}\left(M^G\right)\cdot|C|\geq\dim_{\F_p}(M) 
\end{equation}
\end{fact}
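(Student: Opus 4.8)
The plan is to argue by induction on $|G|$, using a central subgroup of order $p$ to cut the problem down. If $|G|=1$ the inequality \eqref{moduleformula} is the tautology $\dim_{\F_p}(M)\cdot 1\geq\dim_{\F_p}(M)$, so assume $G\neq 1$ and choose a central subgroup $Z=\langle z\rangle$ of order $p$; such a $Z$ exists because $\Zen(G)\neq 1$ for a nontrivial finite $p$-group, and any nontrivial element of $\Zen(G)$ of order $p$ generates one.

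First I would handle the rank-one estimate for $Z$. Consider the descending filtration
\[
M\;\supseteq\;(z-1)M\;\supseteq\;(z-1)^2M\;\supseteq\;\cdots\;\supseteq\;(z-1)^pM=0,
\]
where the last equality holds since $(z-1)^p=z^p-1=0$ in $\F_p[Z]$. For each $i$, multiplication by $z-1$ maps $(z-1)^iM$ onto $(z-1)^{i+1}M$ with kernel contained in $\ker(z-1)=M^Z$, so each successive quotient of the filtration has $\F_p$-dimension at most $\dim_{\F_p}(M^Z)$. Summing over the (at most $p$) steps gives $\dim_{\F_p}(M)\leq p\cdot\dim_{\F_p}(M^Z)$. (Equivalently, one may decompose $M$ as an $\F_p[Z]\cong\F_p[t]/(t^p)$-module into Jordan blocks $\F_p[t]/(t^k)$, $1\le k\le p$, and compare $\dim M=\sum k\,a_k$ with $\dim M^Z=\sum a_k$.)

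Now $Z$ being central, $M^Z$ is a $G$-submodule on which $Z$ acts trivially, hence a finitely generated $\F_p[G/Z]$-module, and clearly $(M^Z)^{G/Z}=M^G$. Applying the inductive hypothesis to $G/Z$ acting on $M^Z$ yields $\dim_{\F_p}(M^Z)\leq|G/Z|\cdot\dim_{\F_p}(M^G)=(|G|/p)\cdot\dim_{\F_p}(M^G)$. Combining this with the rank-one estimate gives $\dim_{\F_p}(M)\leq p\cdot\dim_{\F_p}(M^Z)\leq|G|\cdot\dim_{\F_p}(M^G)$, which is \eqref{moduleformula} (with $C=G$). The only mildly delicate point is the rank-one bound $\dim M\le p\dim M^Z$; the rest is bookkeeping. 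One can alternatively give a non-inductive proof by observing that $\F_p[G]$ is a symmetric algebra with one-dimensional socle spanned by $\sum_{g\in G}g$, hence is the injective hull of the trivial module, so that $M$ embeds into $\F_p[G]^{\,\dim_{\F_p}(M^G)}$ and the dimension bound is immediate; but the inductive argument above seems the most elementary.
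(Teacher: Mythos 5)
The paper offers no proof of this Fact to compare against: it is stated with ``the easy proof of which we leave to the reader,'' so the only question is whether your argument is sound, and it is. The rank-one estimate is correct: with $t=z-1$ one has $t^p=z^p-1=0$ in $\F_p[Z]$, multiplication by $t$ carries $t^iM$ onto $t^{i+1}M$ with kernel inside $\ker(t)=M^Z$, and telescoping the at most $p$ steps gives $\dim_{\F_p}(M)\leq p\cdot\dim_{\F_p}(M^Z)$; the inductive step is also fine, since centrality of $Z$ makes $M^Z$ a $G$-submodule with trivial $Z$-action and $(M^Z)^{G/Z}=M^G$. You also implicitly resolved the one notational wrinkle in the statement correctly: the undefined $|C|$ in \eqref{moduleformula} must be read as $|G|$, which is consistent with how the Fact is invoked in the proof of Theorem \ref{thmdirectproductfree}, where the group in question is the cyclic group $\Z_p/U_k\cong C_{p^k}$ acting on $\Lambda_2\bigl(\F_p\dbl C_{p^k}\dbr^*\bigr)$. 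Your alternative one-line argument via self-injectivity of $\F_p[G]$ (socle of $M$ equals $M^G$ for a $p$-group, and the injective hull of the trivial module is the regular module) is likewise correct, and arguably closer to what the author may have had in mind as ``easy,'' but the inductive proof is the more elementary and fully self-contained of the two.
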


\begin{thm}\label{thmdirectproductfree}
Let $F_2$ be a 2-generated free pro-$p$ group. Then $F_2\times F_2$ is not Bloch-Kato.
\end{thm}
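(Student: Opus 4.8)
The plan is to exhibit a finitely generated closed subgroup $K$ of $F_2\times F_2$ whose cohomology ring $H^\bullet(K)$ is not quadratic, which immediately shows $F_2\times F_2$ is not Bloch-Kato. The natural candidate is the diagonal-type subgroup: write $F_2=\langle x,y\rangle$ and $F_2'=\langle x',y'\rangle$, and consider $K=\langle (x,x'),(y,y')\rangle$, or more generally a suitable $2$-generated subgroup mapping onto both factors. The point is that $K$ surjects onto each $F_2$ (via the two projections), so $d(K)=2$, yet $K$ is not free — it visibly satisfies no single relator of the right form because its image in $F_2\times F_2$ genuinely uses both coordinates — and we will see $\cohdim(K)=2$ with $H^2(K)\neq 0$. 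If $H^\bullet(K)$ were quadratic and $d(K)=2$, then by Proposition~\ref{propdimwBloch-Kato}(ii) one gets $r(K)=\dim_{\F_p}H^2(K)\leq\binom{2}{2}=1$, so $K$ would be a one-relator pro-$p$ group; combined with $\cohdim(K)\le 2$ and the structure forced on quadratic one-relator groups, $K$ would have to be (up to the orientation picture of Theorem~A) either free or $\theta$-abelian, hence powerful. So it suffices to show that the chosen $K$ is \emph{not} powerful: then by Theorem~\ref{thmsymondsthomas} the map $\Lambda_2(\cup)\colon H^1(K)\wedge H^1(K)\to H^2(K)$ fails to be injective, and since $H^1(K)\wedge H^1(K)$ is one-dimensional this forces it to be the zero map, whence by quadraticity the cup product $H^1(K)\times H^1(K)\to H^2(K)$ is identically zero, i.e. $H^2(K)$ is generated by nothing in degree one — contradicting $H^2(K)\neq 0$ together with quadraticity (a quadratic algebra with $H^2\neq 0$ has nonzero product in degree two).

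Concretely I would proceed as follows. First, record that $F_2\times F_2$ contains no non-abelian closed free pro-$p$ subgroup would be \emph{false} — in fact the relevant obstruction is the opposite — so instead the key structural input is: a finitely generated subgroup $K\leq F_2\times F_2$ that surjects onto both coordinate factors has $\cohdim K=2$ and $H^2(K)\ne 0$. This follows from the Lyndon–Hochschild–Serre spectral sequence for $1\to N\to K\to Q\to 1$ where $N=K\cap(F_2\times 1)$ and $Q$ is the image of $K$ in the second $F_2$: both $N$ and $Q$ are free pro-$p$ (closed subgroups of free pro-$p$ groups), $N$ is nontrivial and non-procyclic (since $K$ surjects onto the first factor, forcing $N$ to have infinite rank, or at least rank $\ge 2$ after choosing $K$ appropriately), so $H^1(N)\neq 0$ as a $Q$-module and the term $E_2^{1,1}=H^1(Q,H^1(N))$ survives, giving $H^2(K)\neq 0$; simultaneously $\cohdim K\le\cohdim Q+\cohdim N\le 2$. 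Second, choose $K=\langle(x,x'),(y,y')\rangle$ and check $[K,K]\not\subseteq K^p$: the commutator $([x,y],[x',y'])$ lies in $[K,K]$ and one computes modulo $\Phi(K)$ that it is not a $p$-th power — this is a direct calculation in the free pro-$p$ group, using that $[x,y]\notin F_2^p\cdot[\,[F_2,F_2],F_2\,]$. Hence $K$ is not powerful.

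Then the contradiction closes as in the first paragraph: $K$ Bloch-Kato with $d(K)=2$ forces $H^\bullet(K)$ quadratic; non-powerfulness plus Theorem~\ref{thmsymondsthomas} forces $\Lambda_2(\cup)=0$; but $\dim H^1(K)\wedge H^1(K)=1$ and quadraticity then force $H^2(K)=\image(\cup)=0$, contradicting $H^2(K)\ne 0$. I expect the main obstacle to be the second step — pinning down a specific finitely generated $K$ that is simultaneously two-generated (so that $\binom{d(K)}{2}=1$ and the cup-product space is forced down to dimension one), provably not powerful, and provably has $H^2(K)\ne 0$; the delicate point is ensuring all three hold for the \emph{same} $K$, which is why the diagonal subgroup $\langle(x,x'),(y,y')\rangle$ is the right choice, and verifying $N=K\cap(F_2\times 1)$ is genuinely nontrivial (equivalently, that $K\to F_2\times F_2$ is not injective) requires a short argument: the map $F_2\to F_2\times F_2$, $x\mapsto(x,x')$, $y\mapsto(y,y')$ extends to a surjection of $\langle x,y\rangle$ onto the diagonal copy of $F_2$, but $K$ contains elements like $(xy x^{-1}y^{-1},\,1)$ coming from relations that hold in the second coordinate after a substitution — so in fact one takes $K=\langle (x,x'),(y,y')\rangle$ and notes its image in the \emph{first} factor is all of $F_2$ while $N$ is the kernel of $K\to F_2$ (second projection composed with nothing) — this bookkeeping is the fiddly part but is routine once set up carefully.
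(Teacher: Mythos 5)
There is a genuine gap, and it is fatal to the chosen subgroup. Write $K=\langle(x,x'),(y,y')\rangle$. The first projection maps $K$ onto $\langle x,y\rangle=F_2$, and this projection is \emph{injective}: $K$ is a quotient of the free pro-$p$ group on two symbols $U,V$ via $U\mapsto(x,x')$, $V\mapsto(y,y')$, and composing with the first projection gives a surjection of that free group onto the rank-$2$ free group $\langle x,y\rangle$, which is an isomorphism (finitely generated profinite groups are Hopfian). Hence $K\cong F_2$ is free, $N=K\cap(1\times F_2)$ is trivial, $H^2(K)=0$, and $K$ is perfectly Bloch-Kato. Your claimed element $([x,y],1)\in K$ does not exist: the commutator of the two generators is $([x,y],[x',y'])$ with $[x',y']\neq1$, and there are no relations in the second coordinate to exploit. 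The same Hopfian argument shows your strategy cannot be repaired by a cleverer choice: for any $2$-generated closed $K\leq F_2\times F_2$, if either projection of $K$ has rank-$2$ image then that projection is injective and $K$ is free, while otherwise both images are procyclic and $K$ is abelian; in either case $K$ is Bloch-Kato. So no $2$-generated subgroup can simultaneously be non-powerful and have $H^2\neq0$, and the claim that a subgroup surjecting onto both factors must have $\cohdim=2$ and nontrivial $H^2$ is false (the diagonal is a counterexample). A secondary issue is that your closing argument leans on Proposition~\ref{propdimwBloch-Kato} and Theorem~\ref{thmsymondsthomas} in their odd-$p$ form, whereas the statement is for all $p$.

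The paper's proof uses a $3$-generated subgroup, which is the smallest possible size in view of the above: the fiber product $\Gamma=F_2\times_\varphi F_2$ over a surjection $\varphi\colon F_2\to\Z_p$ killing one generator $r$, generated by $(x,x)$, $(r,r)$, $(r,1)$ (a pro-$p$ analogue of the Stallings--Bieri/Grunewald examples). One identifies $\Gamma\cong R\rtimes F_2$ with $R$ the normal closure of $r$, passes to the quotient $H=R^{\abel}\rtimes\Z_p$ with $R^{\abel}\cong\Z_p\dbl\Z_p\dbr$, and shows via the Lyndon--Hochschild--Serre sequence and a dimension count over the finite quotients $C_{p^k}$ that $\dim_{\F_p}H^2(H)=\infty$, hence $r(\Gamma)=\infty$ while $d(\Gamma)=3$. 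Since a quadratic cohomology ring forces $\dim_{\F_p}H^2(\Gamma)$ to be bounded by a binomial coefficient in $d(\Gamma)$, $H^\bullet(\Gamma)$ is not quadratic and $F_2\times F_2$ is not Bloch-Kato. If you want to salvage your outline, you must abandon the $2$-generated diagonal and instead exhibit a finitely generated subgroup that is not finitely presented, which is exactly what the fiber product provides.
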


\begin{proof}
Let $F_2$ be generated by $x$ and $r$, and consider the presentation
\begin{displaymath}
 \xymatrix{1\ar[r]&R\ar[r]&F_2\ar[r]^\varphi &\Z_p\ar[r]&1,}
\end{displaymath}
where $R$ is generated as closed normal subgroup by $ r $.
We call $\Gamma=F_2\times_\varphi F_2$
the pullback object of the diagram
\begin{displaymath}
\xymatrix{\Gamma\ar[r]\ar[d] & F_2\ar[d]^{\varphi}\\
 F_2\ar[r]_\varphi & \Z_p } 
\end{displaymath}
in the category of pro-$p$ groups. Namely,
\[\Gamma=F_2\times_\varphi F_2=\left\{\left(\xi,\xi'\right)\in F_2\times F_2 \left| \varphi(\xi)=\varphi(\xi')\right.\right\}\clsgp F_2\times F_2.\]
In particular, $\Gamma$ is generated by the pairs $(x,x)$, $(r,r)$ and $(r,1)$.
In fact, every element $(y,y')\in\Gamma$ can be written as 
\begin{equation}\label{remarkGamma}
 (y,y')=(\xi,\xi)(\rho,1)=(\rho',1)(\xi,\xi),
\end{equation}
with $\xi\in F_2$ and $\rho,\rho'\in R$.

Let $R^{\abel}=R/[R,R]$ be the abelianization of $R$, and let $H=R^{\abel}\rtimes \Z_p$,
where the action of $\Z_p$ on $R^{\abel}$ is induced by the conjugation of $F_2$.
Moreover, let $D\clno \Gamma$ be generated as closed normal subgroup of $\Gamma$ by $(r,r)$.

By (\ref{remarkGamma}), the semidirect product $R\rtimes F_2$ maps onto $\Gamma$ via the map $\varphi'$, where
\[\varphi'(\rho,\xi)=(\rho,1)(\xi,\xi),\quad\text{with }\rho\in R,\:\xi\in F_2,\]
which is easily seen to be a homomorphism.
Furthermore, $\varphi'$ is injective, so $R\rtimes F_2$ is isomorphic to $\Gamma$.

Suppose that  $\varphi'(\rho,\xi)=(\rho,1)(\xi,\xi)\in D$.
Thus $\xi\in R$, which implies that $(\xi,\xi),(\rho,1)\in D$.
By (\ref{remarkGamma}), one has that every element of $D$ is generated by elements
$({^\xi\rho},\rho)$, with $\rho,\xi\in R$.
Furthermore,
\[{^{\xi_1}\rho_1}\cdot{^{\xi_2}\rho_2}={^{\xi_1}(\rho_1\rho_2)}y,\quad\text{with }\rho_i,\xi_i\in R,y\in[R,R].\]
Thus, a limit argument shows that $(\rho,1)\in D$ if, and only if, $\rho\in[R,R]$.
This implies that 
\[\frac{\Gamma}{D}\cong H=\frac{R\rtimes F_2}{[R,R]\rtimes R}.\]

We want to show that $\Gamma$ is not finitely presented. 
Assume for contradiction that it is.
Then also $H$ is finitely presented, since $D$ is finitely generated as normal subgroup of $\Gamma$.

\begin{step}
The group $H$ is not finitely presented, i.e., $r(H)=\infty$.
\end{step}
The Hochschild-Lyndon-Serre spectral sequence $H^r(\Z_p,H^s(R^{\abel}))\Rightarrow H^{r+s}(H)$
collapses at the $E_2$-term, i.e., $E_\infty=E_2$, since $cd(\Z_p)=1$.
Thus one has the following exact sequence:
\begin{equation}\label{HLSes}
0\longrightarrow H^1\left(\Z_p,H^1(R^{\abel})\right)\longrightarrow H^2(H)\longrightarrow H^2(R^{\abel})^{\Z_p}\longrightarrow0.
\end{equation}

Since $R$ is one-generated as normal subgroup, the group $R^{\abel}$ is isomorphic to the completed group algebra $\Z_p\dbl \Z_p\dbr$.
Thus the first cohomology group $H^1(R^{\abel})$ is isomorphic to $(\F_p\dbl \Z_p\dbr)^*$, and the second cohomology group
$H^2(R^{\abel})$ is isomorphic to the second exterior algebra $\Lambda_2(\F_p\dbl \Z_p\dbr^*)$.
Since $\F_p\dbl \Z_p\dbr\cong\varprojlim_k\F_p\dbl C_{p^k}\dbr$, where $C_{p^k}$ is the cyclic group of order $p^k$,
one has 
\[H^2(R^{\abel})^\Z_p\cong\varinjlim_{k,U_k}\Lambda_2\left(\F_p\dbl C_{p^k}\dbr^*\right)^{\Z_p/U_k},\]
where $U_k\opno \Z_p$ is such that $\Z_p/U_k\cong C_{p^k}$.

Therefore, Fact \ref{modulefact} implies that for all $k\geq1$ one has
\[\dim_{\F_p}\left(\Lambda_2\left(\F_p\dbl C_{p^k}\dbr^*\right)^{\Z_p/U_k}\right)\geq
\frac{1}{|\Z_p/U_k|}\dim_{\F_p}\Lambda_2\left(\F_p\dbl C_{p^k}\dbr^*\right)=\frac{1}{p^k}\frac{p^k(p^k-1)}{2}.\]
Thus $\dim(H^2(R^{\abel})^{\Z_p})\geq(p^k-1)/2$ for all $k\geq1$, i.e., $H^2(R^{\abel})^{\Z_p}$ has infinite dimension,
and so by (\ref{HLSes}) $\dim(H^2(H))=r(H)=\infty$.
This estabilishes the claim.

\medskip

Therefore the claim implies that $\Gamma$ is not finitely presented.
In particular, $\dim(H^2(\Gamma))=r(\Gamma)=\infty$, whereas $\dim(H^1(\Gamma))=d(\Gamma)=3$.
Hence $H^\bullet(\Gamma)$ is not quadratic, and $F_2\times F_2$ is not Bloch-Kato.
\end{proof}

\begin{rem}
In particular, Theorem~\ref{thmdirectproductfree} shows that $F_2\times F_2$ is not a \textit{coherent} pro-$p$ group,
i.e., it contains a finitely generated group which is not finitely presented.  
By Proposition~\ref{propdimwBloch-Kato}, any finitely generated Bloch-Kato pro-$p$ group is a coherent group.
\end{rem}

Now Theorem~C is the combination of Proposition~\ref{propdirectprodtheta}, Theorem~\ref{thmdirectfrouctZpfree}
and Theorem~\ref{thmdirectproductfree}.

\begin{thmC}
Let $G_1$ and $G_2$ be Bloch-Kato pro-$p$ groups, and assume that $G_1\times G_2$ is Bloch-Kato as well.
Then the following restrictions hold:
\begin{itemize}
 \item[(i)] None of $G_1$ and $G_2$ is a powerful non-abelian Bloch-Kato group;
 \item[(ii)] at least one of the two groups is abelian.
\end{itemize}
In particular, $\Z_p\times S$ is a Bloch-Kato pro-$p$ group for any free pro-$p$ group $S$.
\end{thmC}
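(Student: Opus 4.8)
The plan is to obtain Theorem~C purely by assembling the three statements just proved, so no genuinely new argument is required --- only a case analysis matching each possible shape of $G_1$ and $G_2$ against one of the three obstructions.

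I would first settle~(i). If one of the factors, say $G_1$, were a powerful non-abelian Bloch-Kato pro-$p$ group, then Proposition~\ref{propdirectprodtheta} would give that $G_1\times H$ is not Bloch-Kato for \emph{any} pro-$p$ group $H$; taking $H=G_2$ contradicts the standing hypothesis that $G_1\times G_2$ is Bloch-Kato. The same applies to $G_2$ by symmetry, which is exactly~(i).

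Next I would establish~(ii) by contradiction, assuming that neither $G_1$ nor $G_2$ is abelian. By part~(i), neither factor is powerful (a powerful factor would here be powerful and non-abelian, which~(i) forbids). Hence, by Corollary~\ref{corollmetabelian}, each $G_i$ --- not being powerful --- contains a closed free pro-$p$ subgroup of infinite rank; and since closed subgroups of free pro-$p$ groups are free pro-$p$, the closed subgroup generated by two members of a basis of such a subgroup is a non-abelian free pro-$p$ group of rank two. Thus each $G_i$ contains a closed copy $F^{(i)}\cong F_2$, so $F^{(1)}\times F^{(2)}\cong F_2\times F_2$ sits as a closed subgroup of $G_1\times G_2$. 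Because every closed subgroup of $F^{(1)}\times F^{(2)}$ is again a closed subgroup of the Bloch-Kato group $G_1\times G_2$, hence has quadratic $\F_p$-cohomology ring, $F_2\times F_2$ would itself be Bloch-Kato --- contradicting Theorem~\ref{thmdirectproductfree}. Therefore at least one of $G_1,G_2$ must be abelian.

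Finally, the closing assertion that $\Z_p\times S$ is a Bloch-Kato pro-$p$ group for every free pro-$p$ group $S$ is precisely Theorem~\ref{thmdirectfrouctZpfree}, so nothing further is needed. As the whole argument merely reorganises results already in hand, I do not expect any real obstacle; the only points worth spelling out are that a closed subgroup of a Bloch-Kato pro-$p$ group is again Bloch-Kato (immediate, since a closed subgroup of a closed subgroup is closed), and that a non-powerful Bloch-Kato pro-$p$ group contains an honest copy of $F_2$ rather than merely some infinite-rank free subgroup.
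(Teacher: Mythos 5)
Your proposal is correct and matches the paper's intent: the paper simply declares Theorem~C to be the combination of Proposition~\ref{propdirectprodtheta}, Theorem~\ref{thmdirectfrouctZpfree} and Theorem~\ref{thmdirectproductfree}, and your case analysis (using the Tits-alternative machinery of Corollary~\ref{corollmetabelian} to extract a closed copy of $F_2$ in each non-abelian, non-powerful factor and then invoking Theorem~\ref{thmdirectproductfree}) is exactly the assembly the author has in mind. The only caveat, which the paper itself shares rather than resolves, is that Corollary~\ref{corollmetabelian} is stated for $p$ odd, so the argument for~(ii) is rigorous only in that case.
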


\section*{Aknowledgement}
We would like to thank John Labute, Franco Magri, J\'an Min\'a\v{c} and Dan Segal, for encouragement, help and 
many other reasons.
Thanks also to Sunil K. Chebolu, Jochen G\"artner, Hilaf Hasson, Danny Krashen and Danny Neftin for the interest shown
in the topics of this article.
And above all thanks Thomas S. Weigel, whitout the guidance of whom this article would not exist.

\end{document}